\newif\ifshowtikz
\let\oldtikzcd\tikzcd
\let\oldendtikzcd\endtikzcd
\renewenvironment{tikzcd}{%
    \ifshowtikz\expandafter\oldtikzcd%
    \else\comment%
    \fi
}{%
    \ifshowtikz\oldendtikzcd%
    \else\endcomment%
    \fi
}
\let\oldtikzpicture\tikzpicture
\let\oldendtikzpicture\endtikzpicture
\renewenvironment{tikzpicture}{%
    \ifshowtikz\expandafter\oldtikzpicture%
    \else\comment%
    \fi
}{%
    \ifshowtikz\oldendtikzpicture%
    \else\endcomment%
    \fi
}
\newcolumntype{C}{>{$}c<{$}}
\renewcommand{\P}{\mathbb{P}}
\newcommand{\C}{\mathbb{C}}
\newcommand{\A}{\mathbb{A}}
\newcommand{\Z}{\mathbb{Z}}
\newcommand{\comma}{,}
\newcommand{\OO}{\mathcal{O}}
\newcommand{\Coh}{\operatorname{Coh}}
\newcommand{\Pic}{\operatorname{Pic}}
\newcommand{\perf}{\operatorname{perf}}
\newcommand{\Hom}{\operatorname{Hom}}
\newcommand{\Ext}{\operatorname{Ext}}
\newcommand{\Spec}{\operatorname{Spec}}
\newcommand{\w}{\mathbf{w}}
\newcommand{\W}{\mathbf{W}}
\newcommand{\G}{\mathbb{G}_m}
\newcommand{\eps}{\varepsilon}
\newcommand{\Aut}{\text{Aut}}
\newcommand{\Exc}{\mathbf{Exc}}
\newcommand{\xt}{\check{x}}
\newcommand{\yt}{\check{y}}
\tikzset{->-/.style={decoration={
			markings,
			mark=at position #1 with {\arrow{>}}},postaction={decorate}}}
\theoremstyle{plain}
\newtheorem{mthm}{Theorem}
\newtheorem{mdef}{Definition}
\newtheorem{mconj}{Conjecture}
\newtheorem{thm}{Theorem}[section]
\newtheorem{lem}[thm]{Lemma}
\newcommand{\vc}[2]{{{}^{#2}V_{#1}}}
\theoremstyle{remark}
\newtheorem{ex}[thm]{Example}
\newtheorem{rmk}[thm]{Remark}
\title{Homological mirror symmetry for nodal stacky curves}
\address{Universit\"at Hamburg\\ Fachbereich Mathematik\\ Bundesstra{\ss}e 55\\ 20146 Hamburg\\ Germany}
\author{Matthew Habermann}
\email{matthew.habermann@uni-hamburg.de}
\begin{document}

	\begin{abstract}
		In this paper, we establish homological mirror symmetry where the A-model is a finite quotient of the Milnor fibre of an invertible curve singularity, proving a conjecture of Lekili and Ueda from \cite{LekiliUeda} in this dimension. Our strategy is to view the B--model as a cycle of stacky projective lines and generalise the approach of Lekili and Polishchuk in \cite{LekiliPolishchukAuslander} to allow the irreducible components of the curve to have non-trivial generic stabiliser. We then prove that the A--model which results from this strategy is graded symplectomorphic to the corresponding quotient of the Milnor fibre.  
	\end{abstract}
	\maketitle
\section{Introduction}
To begin with, consider an $n\times n$ matrix $A$ with non-negative integer entries $a_{ij}$. From this, we can define a polynomial $\w\in\C[x_1,\dots,x_n]$ given by
\begin{align*}
\w(x_1,\dots,x_n)=\sum_{i=1}^n\prod_{j=1}^nx_j^{a_{ij}}. 
\end{align*} 
If $\w$ is quasi-homogeneous, we can associate to it a \emph{weight system} $(d_0,d_1,\dots, d_n;h)$, where 
\begin{align*}
\w(t^{d_1}x_1,\dots, t^{d_n}x_n)=t^h\w(x_1,\dots,x_n),
\end{align*}
and $d_{0}:=h-d_1-\dots-d_n$. In \cite{BerglundHubsch}, the authors define the transpose of $\w$, denoted by $\widecheck{\w}$, to be the polynomial associated to $A^T$, 
\begin{align*}
\widecheck{\w}(\check{x}_1,\dots,\check{x}_n)=\sum_{i=1}^n\prod_{j=1}^n\check{x}_j^{a_{ji}},
\end{align*}
and we call this the \emph{Berglund--H\"ubsch transpose}. One can associate a weight system for $\widecheck{\w}$, denoted by $(\check{d}_{0},\check{d}_1,\dots,\check{d}_n;\check{h})$, in the same way as for $\w$. We call a polynomial $\w$ \emph{invertible} if the matrix $A$ is invertible over $\mathbb{Q}$, and if both $\w$ and $\widecheck{\w}$ define isolated singularities at the origin.\\

A corollary of Kreuzer--Skarke's classification of quasi-homogeneous polynomials, \cite{kreuzer1992}, is that any invertible polynomial can be decoupled into the Thom--Sebastiani sum of \textit{atomic} polynomials of the following three types:
\begin{itemize}
	\item Fermat: $\w=x_1^{p_1}$,
	\item Loop: $\w=x_1^{p_1}x_2+x_2^{p_2}x_3+\dots+x_n^{p_n}x_1$,
	\item Chain: $\w=x_1^{p_1}x_2+x_2^{p_2}x_3+\dots+x_{n-1}^{p_{n-1}}x_n+x_n^{p_n}$.
\end{itemize}
The Thom--Sebastiani sums of polynomials of Fermat type are also called Brieskorn--Pham.
\begin{rmk}
	In this paper, we will use the word `chain' to refer to both a chain of curves, as well as a curve corresponding to an invertible polynomial of chain type (which is in fact a cycle of curves with two irreducible components). We believe that this distinction will be clear from context.
\end{rmk}
To any invertible polynomial, one can associate its \textit{maximal symmetry group}
\begin{align*}
\Gamma_{\w}:=\{(t_1,\dots,t_{n+1})\in(\C^*)^{n+1}|\ \w(t_1x_1,\dots, t_nx_n)=t_{n+1}\w(x_1,\dots, x_n)\}.
\end{align*}
In general, this is a finite extension of $\C^*$, and is the group of diagonal transformations of $\A^n$ which keep $\w$ semi-invariant with respect to the character $(t_1,\dots, t_{n+1})\mapsto t_{n+1}$, which we denote as $\chi_\w$. It follows from the fact that $\w$ is quasi-homogeneous that there is a natural inclusion
\begin{align}
\begin{split}\label{GradingInclusionEq}
\phi:\C^*&\hookrightarrow \Gamma\\
t&\mapsto(t^{d_1},\dots,t^{d_n}),
\end{split}
\end{align}
and a subgroup $\Gamma\subseteq \Gamma_\w$ is defined to be \emph{admissible} if it is of finite index and contains $\mathrm{im}(\phi)$. Given an admissible subgroup $\Gamma\subseteq \Gamma_{\w}$, we define the \emph{Berglund--Henningson} dual group as 
\begin{align}\label{DualGpDefn}
\widecheck{\Gamma}:=\Hom(\Gamma_{\w}/\Gamma,\C^*).
\end{align}
The extension to consider invertible polynomials with different grading groups was initiated by Berglund and Henningson in \cite{Berglund1995LandauGinzburgOM} and systematically studied by Krawitz in \cite{Krawitz}. By construction, we have that $\widecheck{\Gamma}\subseteq \mathrm{SL}(n,\C)$. Note that this definition differs from that of \cite[Definition 15]{Krawitz}; however, \cite[Proposition 3]{EbelingTakahashi}, shows that the above definition is equivalent\footnote{Strictly speaking, the cited result pertains to the closely related maximal group of symmetries which keeps $\w$ invariant, although the resulting quotient, and therefore dual groups, are the same.}. With this setup, homological Berglund--H\"ubsch--Henningson mirror symmetry predicts:
\begin{mconj}\label{MFConjecture} For any invertible polynomial $\w$ with admissible symmetry group $\Gamma\subseteq \Gamma_\w$ and corresponding dual group $\widecheck{\Gamma}$, there is a quasi-equivalence
	\begin{align*}
	\mathrm{mf}(\A^n,\Gamma, \w)\simeq \mathcal{FS}_{\widecheck{\Gamma}}(\widecheck{\w})
	\end{align*}
	of pre-triangulated $A_\infty$-categories over $\C$.
\end{mconj}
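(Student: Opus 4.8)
The plan is to prove this conjecture in the case treated in the present paper: when $\widecheck{\w}$ is an invertible \emph{curve} singularity — equivalently, when $\w$ is an invertible polynomial in two variables (a Brieskorn--Pham, chain or loop polynomial), $\widecheck{\Gamma}\subseteq\mathrm{SL}(2,\C)$ is a finite group, and the Milnor fibre $M_{\widecheck{\w}}$ of $\widecheck{\w}$ is an open Riemann surface carrying a $\widecheck{\Gamma}$-action. The strategy is to reduce both sides to explicit minimal $A_\infty$-algebras over $\C$ and to exhibit a quasi-isomorphism between them.

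\emph{The B--model.} First I would identify $\mathrm{mf}(\A^2,\Gamma,\w)$ with the derived category of a nodal stacky curve. Orlov's theorem on graded matrix factorisations, in the form covering the grading groups $\Gamma$ considered here, realises $\mathrm{mf}(\A^2,\Gamma,\w)$ as the derived category of a finite-dimensional algebra, which one recognises — via the Auslander-order technology of Lekili--Polishchuk \cite{LekiliPolishchukAuslander} — as $D^b(\mathcal{C})$, where $\mathcal{C}$ is a cycle of stacky projective lines (with one or two irreducible components; for chain type there are two, by the remark above). The combinatorial data of $\mathcal{C}$ — the number of components, the orders of the orbifold points, and, crucially, the orders of the \emph{generic stabilisers} of the components — are read off from the matrix $A$ together with the index $[\Gamma_\w:\Gamma]$; the case $\Gamma=\Gamma_\w$, in which all generic stabilisers are trivial, is the one handled by Lekili--Polishchuk, and the content of this step is to push their argument through when $\Gamma\subsetneq\Gamma_\w$. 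I would then produce, via the Auslander sheaf, a tilting object $G$ on $\mathcal{C}$ (line bundles on the components together with skyscrapers at the nodes and their twists) and compute the $A_\infty$-endomorphism algebra $\mathcal{A}:=\mathrm{REnd}_{\mathcal{C}}(G)$: its cohomology is a gentle-type algebra now carrying an additional $\Z$-grading recording the stabiliser characters, and the higher products are pinned down by this grading together with a Hochschild cohomology computation.

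\emph{The A--model.} On the symplectic side I would begin from the classical description of the Milnor fibre of a two-variable invertible polynomial together with a distinguished basis of vanishing cycles (A'Campo's real morsification, or the Coxeter--Dynkin data), so that $\mathcal{FS}(\widecheck{\w})$ is presented by an explicit arc system on the graded surface $M_{\widecheck{\w}}$; incorporating the $\widecheck{\Gamma}$-action then presents $\mathcal{FS}(\widecheck{\w},\widecheck{\Gamma})$ as the Fukaya category of a graded orbifold surface $\Sigma$, which by the Haiden--Katzarkov--Kontsevich description of Fukaya categories of surfaces is governed by combinatorial data from which one reads off the $A_\infty$-endomorphism algebra $\mathcal{B}$ of the (generating) vanishing cycles.

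\emph{Matching and geometric identification.} The heart of the proof is to show $\mathcal{A}\simeq\mathcal{B}$ as $A_\infty$-algebras. Rather than matching all higher products by hand, I would run the moduli-of-$A_\infty$-structures argument of Lekili--Polishchuk and Lekili--Ueda \cite{LekiliUeda}: both $\mathcal{A}$ and $\mathcal{B}$ are $A_\infty$-structures on the same $\Z^2$-graded algebra, compatible with the stabiliser grading; one shows the relevant moduli space of such structures is small — generically a point — computes where each side lands, and concludes. Finally the graded surface $\Sigma$ that emerges must be identified with the intended A-model: I would check directly that $\Sigma$ is graded symplectomorphic to $M_{\widecheck{\w}}/\widecheck{\Gamma}$, matching genus, punctures, orbifold points, and the grading (line field) induced by $\widecheck{\Gamma}\subseteq\mathrm{SL}(2,\C)$. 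I expect the main obstacle to be precisely the passage to non-trivial generic stabilisers: both the Auslander/root-stack model on the B-side and the propagation of the auxiliary grading through the $A_\infty$-comparison require genuinely new input beyond Lekili--Polishchuk, and the Hochschild-cohomology (or deformation) calculation that rigidifies the $A_\infty$-structure must be redone in this stacky setting.
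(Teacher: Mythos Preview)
Your proposal contains a fundamental conflation of two different conjectures, and the paper does not in fact prove the statement you were given. \cref{MFConjecture} concerns $\mathrm{mf}(\A^2,\Gamma,\w)$ and the \emph{Fukaya--Seidel} category $\mathcal{FS}(\widecheck{\w},\widecheck{\Gamma})$; the paper instead proves \cref{LUConj} in dimension two (its \cref{InvertiblePolyTheorem}), which concerns $\mathrm{mf}(\A^{3},\Gamma,\w+xyz)\simeq D^b\Coh(Z_{\w,\Gamma})$ and the \emph{wrapped} Fukaya category of the Milnor fibre quotient $\widecheck{V}/\widecheck{\Gamma}$. The sentence in the introduction referring to ``a proof of \cref{MFConjecture}'' appears to be a misreference to \cref{LUConj}; the abstract, the statement of \cref{InvertiblePolyTheorem}, and the entire body of the paper are unambiguously about Milnor fibres. \cref{MFConjecture} for $n=2$ is attributed in the paper to other work (\cite{HabermannSmith}, \cite{ChoChoaJeong}, \cite{HabBHHHMS}).

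Your B--side step is the clearest symptom of the mix-up. Orlov's theorem applied to $\mathrm{mf}(\A^2,\Gamma,\w)$ does \emph{not} produce the derived category of a nodal stacky curve: the nodal curve $Z_{\w,\Gamma}$ arises only after adding the extra variable, i.e.\ from $\mathrm{mf}(\A^{3},\Gamma,\w+xyz)$, as in \eqref{InvertibleHypersurface}. Indeed $\mathrm{mf}(\A^2,\Gamma,\w)$ is smooth and proper with a full exceptional collection, so it cannot be $D^b\Coh$ of a singular curve. On the A--side you likewise slide from ``$\mathcal{FS}(\widecheck{\w},\widecheck{\Gamma})$ is presented by arcs on $M_{\widecheck{\w}}$'' to ``the Fukaya category of $M_{\widecheck{\w}}/\widecheck{\Gamma}$''; the Fukaya--Seidel category (generated by thimbles, with an exceptional collection) is not the compact or wrapped Fukaya category of the fibre. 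What your outline would actually establish, if completed, is a version of \cref{LUConj}, not \cref{MFConjecture}. Finally, even for \cref{LUConj} the paper does not run a moduli-of-$A_\infty$-structures argument: it directly matches generating collections by building an Auslander order over the nodal stacky curve, matching its tilting object with the HKK generators of a partially wrapped Fukaya category $\mathcal{W}(\Sigma;\Lambda)$, and then localising on both sides to obtain $D^b\Coh\leftrightarrow\mathcal{W}$ and $\perf\leftrightarrow\mathcal{F}$; the identification $\Sigma\simeq\widecheck{V}/\widecheck{\Gamma}$ is done by comparing ribbon graphs and line fields. No Hochschild or deformation computation is needed.
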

In the above, $\mathrm{mf}(\A^n,\Gamma, \w)$ is the dg-category of $\Gamma$-equivariant matrix factorisations of $\w$ on $\A^n$, and $\mathcal{FS}_{\widecheck{\Gamma}}(\widecheck{\w})$ is the orbifold Fukaya--Seidel category of $(\widecheck{\w},\widecheck{\Gamma})$. Presently, there does not exist a definition of such a category in full generality, although the work of \cite{ChoChoaJeong} gives a candidate in the context of invertible polynomials where $d_0\leq 0$. In the maximally graded case where $\Gamma=\Gamma_\w$, this is known as homological Berglund--H\"ubsch mirror symmetry, and goes back to \cite{Weightedprojectivelines}, \cite{2006math......4361U}. Recently, there have been many results in the direction of establishing this conjecture. In the maximally graded case it has been proven in several situations -- in particular, for Brieskorn--Pham polynomials in any number of variables in \cite{Futaki2011}, and for Thom--Sebastiani sums of polynomials of type $A$ and $D$ in \cite{FU2}. The conjecture is also established for all invertible polynomials in two variables in \cite{HabermannSmith}. Progress on the conjecture was made in \cite{PolishchukVarolgunes}, where the authors demonstrate that the exceptional collection in the category of matrix factorisations constructed in \cite{AramakiTakahashi} satisfies a recursion relation with respect to the number of variables. Recently, in \cite{HabBHHHMS}, a reformulation of \cref{MFConjecture} in two variables (\cite[Conjecture 6.1]{FU2}) was established, incorporating the equivariance in the A--model by pulling back $\widecheck{\w}$ to the total space of the crepant resolution of $\C^2/\widecheck{\Gamma}$.\\

By the definition of $\widecheck{\Gamma}$ (see \cref{Applications section} for the case of two variables which we consider), we have that it is a subgroup of $(\C^*)^n$ which keeps $\widecheck{\w}$ \emph{invariant}, and so, in particular, preserves its fibres when considering it as a map $\widecheck{\w}:\C^n\rightarrow \C$. In this case, the (completed) Milnor fibre is $\widecheck{\w}^{-1}(1)=\widecheck{V}$, so it makes sense to define the equivariant Milnor fibre as the quotient stack $[\widecheck{V}/\widecheck{\Gamma}]$, although it should be emphasised that symplectic techniques in this setting are still in their infancy. Nevertheless, once an appropriate definition of the wrapped Fukaya category for such a quotient stack is made sense of, one expects:
\begin{mconj}[{\cite[Conjecture 1.4]{LekiliUeda}}]\label{LUConj}
	For any invertible polynomial $\w$ with admissible symmetry group $\Gamma\subseteq \Gamma_{\w}$ and corresponding dual group $\widecheck{\Gamma}$, there is a quasi equivalence 
	\begin{align*}
	\mathcal{W}([\widecheck{V}/\widecheck{\Gamma}])\simeq \mathrm{mf}(\A^{n+1},\Gamma,\w+x_0x_1\dots x_n)
	\end{align*}
	of pre-triangulated $A_\infty$-categories over $\C$. 
\end{mconj}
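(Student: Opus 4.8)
The plan is to prove the conjecture when $\widecheck\w$ is an invertible polynomial in two variables, so that $\widecheck V=\widecheck\w^{-1}(1)$ is an open Riemann surface, $[\widecheck V/\widecheck\Gamma]$ an orbifold surface, and the B--side $\mathrm{mf}(\A^3,\Gamma,\w+x_0x_1x_2)$ the matrix factorisation category of a three--variable quasi-homogeneous (non-isolated) singularity. The argument would run in three stages: (1) rewrite the B--side as the derived category of a nodal stacky curve; (2) compute the A--side via an Auslander-order model, generalising \cite{LekiliPolishchukAuslander}; (3) identify the geometric input of (2) with $[\widecheck V/\widecheck\Gamma]$.

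For stage (1), since $d_0+d_1+d_2=h$ the monomial $x_0x_1x_2$ carries the weighted degree of $\w$, so $\w+x_0x_1x_2$ is quasi-homogeneous for $\Gamma$ enlarged by the $\C^*$ acting on $x_0$; I would then invoke an Orlov-type equivalence for graded matrix factorisations to identify $\mathrm{mf}(\A^3,\Gamma,\w+x_0x_1x_2)$ with $D^b(\Coh\,\mathbb X)$, where $\mathbb X$ is the vanishing locus of $\w+x_0x_1x_2$ in the associated projective quotient stack (the vanishing of $\sum d_i-h$ is exactly what makes this equivalence hold without a semiorthogonal correction). Because $\w$ is a two-variable invertible polynomial, $\mathbb X$ is a cycle of stacky projective lines: cyclic orbifold points at the nodes, and -- the feature absent from \cite{LekiliPolishchukAuslander} -- non-trivial generic stabilisers on the components, recording the part of $\Gamma$ acting trivially on $\mathbb X$. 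The combinatorial data (length of the cycle, orders of the stabilisers) is read off from the weight system of $\w$ and from $\Gamma$, and depends on whether $\w$ is of Brieskorn--Pham, loop, or chain type.

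For stage (2), I would first build an Auslander order $A_{\mathbb X}$ over $\mathbb X$ that accommodates non-trivial generic stabilisers -- this alters the local model of the order at the generic point of each component -- and show $\perf(A_{\mathbb X})$ is a categorical resolution of $D^b(\Coh\,\mathbb X)$. I would then identify $A_{\mathbb X}$, as an $A_\infty$-algebra, with the endomorphism algebra of an explicit collection of graded Lagrangian arcs (with the requisite bounding data) in a partially wrapped Fukaya category of a surface-with-stops $\Sigma$; concretely this algebra is a finite quiver with relations, so the two presentations can be matched directly. Passing to the appropriate localisation (removing the stops) would then yield a quasi-equivalence between $\mathrm{mf}(\A^3,\Gamma,\w+x_0x_1x_2)$ and the wrapped Fukaya category $\mathcal W(\Sigma)$.

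For stage (3), it remains to show $\Sigma$ is graded symplectomorphic to $[\widecheck V/\widecheck\Gamma]$. This is an explicit computation: the Milnor fibre of a two-variable invertible polynomial has a standard description (for instance as an iterated plumbing of cylinders whose graph reflects the Brieskorn--Pham/loop/chain structure), the action of $\widecheck\Gamma\subseteq\mathrm{SL}(2,\C)$ is diagonal and hence easy to quotient by, and one compares the genus, punctures, orbifold points, and -- to pin down the $\Z$-grading -- the homotopy class of the grading line field of the quotient with those of $\Sigma$. The hard part, I expect, is stage (2): both the Auslander-order formalism in the presence of generic stabilisers and the realisation of $A_{\mathbb X}$ by a partially wrapped Fukaya category demand input genuinely beyond \cite{LekiliPolishchukAuslander}; stage (3) is computational but conceptually routine, and stage (1) is a fairly standard application of known equivalences for graded matrix factorisations.
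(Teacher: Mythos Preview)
Your three-stage plan is essentially the paper's strategy: Orlov's theorem reduces the B--side to $D^b\Coh$ of a nodal stacky cycle $Z_{\w,\Gamma}$, a generalised Auslander order over this curve (allowing non-trivial generic stabilisers on components, handled via root-stack/gerbe structures) furnishes a tilting object whose gentle endomorphism algebra is matched with a partially wrapped Fukaya category \`a la \cite{HKK}, and localisation plus an explicit graded-symplectomorphism between the resulting surface and the quotient Milnor fibre closes the argument. One small correction that in fact simplifies your stage~(3): in two variables the $\widecheck{\Gamma}$-action on $\widecheck{V}$ is \emph{free} (the only fixed point of the diagonal $\mathrm{SL}_2$ action is the origin, which misses the Milnor fibre), so there are no orbifold points to track and $[\widecheck{V}/\widecheck{\Gamma}]=\widecheck{V}/\widecheck{\Gamma}$ is an honest surface.
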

Here, $\mathrm{mf}(\A^{n+1},\Gamma,\w+x_0x_1\dots x_n)$ is the dg-category of $\Gamma$--equivariant matrix factorisations of $\w+x_0\dots x_n$ on $\A^{n+1}$, where the action of $\Gamma$ has been extended in a prescribed way (\cite[Section 2]{LekiliUeda}). In the maximally graded case, this conjecture was recently established in the case of $n\geq 3$ for all simple singularities in \cite{LekiliUedaSimple}, and the case of Brieskorn--Pham polynomials of the form $x_1^2+x_2^2+\w$ in \cite{LiNonexistence}. Recently, progress towards a $\Z/2$-graded equivalence was made for the Milnor fibre of any maximally graded invertible polynomial in \cite{GammageHMS}. \\

There is a trichotomy of cases depending on the sign of $d_0$, and in the case of $d_0>0$ (log general type), a generalisation of Orlov's theorem (\cite[Theorem 3.11]{Orlov2009}) yields an equivalence 
\begin{align}
\mathrm{mf}(\A^{n+1},\Gamma,\w+x_0x_1\dots x_n)\simeq D^b\Coh(Z_{\w,\Gamma}),
\end{align}
where
\begin{align}\label{InvertibleHypersurface}
Z_{\w,\Gamma}:=\big[\big(\Spec\C[x_0,x_1,\dots, x_n]/(\w+x_0x_1\dots x_n)\setminus(\boldsymbol{0})\big)/\Gamma\big].
\end{align}
The generalisation to the case where $\Gamma_\w$ is a finite extension of $\C^*$ is straightforward, and the extension to the setting of dg-categories was studied in \cite{Shipmandg}, \cite{Isikorlov}, \cite{CaldararuTu}. In two variables, every invertible polynomial is of log general type except for $x^2+y^2$. This, however, corresponds to the well-understood HMS statement for $\C^*$.\\

Recall that the subcategory $\perf Z_{\w,\Gamma}\subseteq D^b\Coh(Z_{\w,\Gamma})$ is comprised of the objects which are Ext-finite, since $Z_{\w,\Gamma}$ is a proper stack. On the symplectic side, it is clear that compact Lagrangians have finite dimensional Hom-spaces; however, it is not known in general if non-compact Lagrangians necessarily do not. This is reasonable to expect though, and is certainly the case in all known circumstances. In the equivariant setting, it makes sense that the same statement should be true, since morphisms should be given by the $\widecheck{\Gamma}$-invariant pieces of the Floer complexes on $\widecheck{V}$. Therefore, Conjecture \ref{LUConj} in the log general type case would imply an equivalence
\begin{align}\label{CompactImplication}
\mathcal{F}([\widecheck{V}/\widecheck{\Gamma}])\simeq\perf Z_{\w,\Gamma}.
\end{align}
In the maximally graded case, the first instance of this was given in \cite{DehnSurgery} for $x_1^3+x_2^2$. The equivalence was subsequently establish in the cases of $\w=\sum_{i=1}^n x_i^{n+1}$ and $\w=x_1^2+\sum_{i=2}^n x_i^{2n}$, both for any $n>1$, in \cite{LekiliUeda}, and for all invertible polynomials in two variables in \cite{HabermannHMS}. Our main result is a proof of \cref{MFConjecture} and the implication \eqref{CompactImplication} in the case of $n=2$, also allowing for the B--model to be non-maximally graded.
\begin{mthm}\label{InvertiblePolyTheorem} Let $\w$ be an invertible polynomial in two variables with admissible symmetry group $\Gamma\subseteq \Gamma_{\w}$ and corresponding dual group $\widecheck{\Gamma}$. Then, the action of $\widecheck{\Gamma}$ on $\widecheck{V}$ is free, and there are quasi-equivalences
	\begin{align*}
	\mathcal{F}(\widecheck{V}/\widecheck{\Gamma})&\simeq \perf Z_{\w,\Gamma}\\
	\mathcal{W}(\widecheck{V}/\widecheck{\Gamma})&\simeq D^b\mathrm{Coh} (Z_{\w,\Gamma})
	\end{align*}
	of $\Z$-graded pre-triangulated $A_\infty$-categories over $\C$.
\end{mthm}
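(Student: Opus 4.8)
The plan is to reduce the statement to an algebraic computation on the B-side, following the template of Lekili--Polishchuk, suitably extended to the stacky setting. First I would set up the geometry: by the Orlov-type equivalence mentioned above, it suffices to identify $D^b\Coh(Z_{\w,\Gamma})$ (and its subcategory $\perf$) with the wrapped (respectively compact) Fukaya category of the quotient $\widecheck V/\widecheck\Gamma$. The key structural observation is that $Z_{\w,\Gamma}$ is a nodal \emph{stacky} curve — a cycle of weighted projective lines $\P(a,b)$ glued at stacky or coarse nodes, with possibly non-trivial generic stabiliser on each component — and I would make this explicit in each of the three atomic cases ($x^p+y^q$ Brieskorn--Pham, $x^py+y^q$ chain, $x^py+y^qx$ loop), tracking how $\Gamma\subsetneq\Gamma_\w$ changes the isotropy data. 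On the symplectic side, the crucial first claim — freeness of the $\widecheck\Gamma$-action on $\widecheck V$ — should fall out of a direct check: $\widecheck\Gamma\subseteq\mathrm{SL}(2,\C)$ acts on the affine curve $\widecheck V=\widecheck\w^{-1}(1)$, and one verifies no non-identity element has a fixed point on this level set, so $\widecheck V/\widecheck\Gamma$ is a genuine (non-stacky) surface with boundary/punctures. This makes the A-side an honest Fukaya category and lets me sidestep orbifold Floer theory entirely.

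Next I would produce a generating object on each side and compute its endomorphism algebra. On the B-side, following the Auslander-order philosophy of \cite{LekiliPolishchukAuslander}, I would build a tilting object on $Z_{\w,\Gamma}$ from a configuration of line bundles on the normalisation together with structure sheaves of the nodes (and their twists by the generic stabiliser characters), and compute its endomorphism algebra $\Lambda$ as an explicit finite-dimensional (or graded) algebra — the generalisation of the relevant gentle/Auslander algebra allowing non-trivial generic stabilisers is precisely the "result of independent interest" flagged in the abstract, so I would state and prove it here as a standalone proposition: for a cycle of stacky $\P^1$'s one gets a concrete quiver with relations whose derived category recovers $D^b\Coh$. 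On the A-side, I would choose an explicit spanning set of Lagrangian arcs and circles on $\widecheck V/\widecheck\Gamma$ — lifting a known generating collection on $\widecheck V$ (as in the two-variable HMS of \cite{HabermannSmith}, \cite{HabermannHMS}) and taking $\widecheck\Gamma$-orbits — and compute the wrapped/compact Floer cohomology combinatorially, since $\widecheck V/\widecheck\Gamma$ is a surface and all the relevant $A_\infty$-structure is determined by counting immersed polygons.

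The main obstacle I expect is matching the two endomorphism algebras \emph{including all the gradings and signs}, and in particular getting the $\Z$-grading right on both sides simultaneously. On the B-side the grading comes from the $\Gamma$-equivariant (Orlov) structure — choosing the right equivariant lifts of the tilting summands — while on the A-side it comes from a grading of the surface $\widecheck V/\widecheck\Gamma$ by a line field, and one must check these are compatible, i.e. that the Lagrangians can be graded so that intersection-point degrees agree with $\Ext$-degrees. The non-maximally-graded case is exactly where this is most delicate, because passing from $\Gamma_\w$ to $\Gamma$ both rigidifies the stack $Z_{\w,\Gamma}$ and enlarges the dual group $\widecheck\Gamma$, so the two combinatorial models must be seen to deform in lockstep. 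Once the graded $A_\infty$-algebras are identified, I would invoke intrinsic formality (the algebra will be concentrated in the appropriate degrees, or directly gentle, hence formal) to upgrade the quasi-isomorphism of algebras to a quasi-equivalence of the pre-triangulated hulls, and finally identify $\perf$ with the compact Fukaya category by noting that on both sides it is cut out as the subcategory of proper/Ext-finite objects — on the A-side, the compact Lagrangians, which are precisely the circles, matching the perfect complexes supported away from where the tilting object fails to be locally free. The last remaining step is to conclude the graded symplectomorphism $\widecheck V/\widecheck\Gamma$ with the relevant Milnor-fibre quotient, which is a direct topological computation comparing Euler characteristics, numbers of punctures, and the line-field data.
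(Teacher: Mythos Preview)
Your proposal is on the right track and shares the paper's overall architecture --- identify $Z_{\w,\Gamma}$ as a cycle of stacky $\P^1$'s, extend the Auslander-order tilting result to allow non-trivial generic stabiliser, and then match with the A-side --- but it diverges from the paper's argument at one structurally important point. You propose to build a tilting object ``on $Z_{\w,\Gamma}$'' and match it directly against a generating collection in $\mathcal{W}(\widecheck V/\widecheck\Gamma)$, invoking formality. The paper does not do this: the tilting object lives not in $D^b\Coh(Z_{\w,\Gamma})$ (which, being singular, has no tilting object) but in the derived category of modules over the Auslander sheaf $\mathcal{A}_\mathcal{C}$, a categorical resolution. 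The A-side mirror of this resolution is the \emph{partially wrapped} Fukaya category $\mathcal{W}(\Sigma;\Lambda)$ with a specific configuration of stops, and it is precisely this stopped category whose generators have a formal, gentle endomorphism algebra by \cite{HKK}. One then recovers $D^b\Coh$ and $\mathcal{W}$ simultaneously by identifying the relevant localising subcategories (torsion modules $\leftrightarrow$ Lagrangians supported near stops) and quotienting. Your direct route would need formality of generators of the \emph{fully} wrapped category, which does not hold in general, so the intermediate stopped step is not merely cosmetic.

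The second difference is that the paper does not compute Floer theory on $\widecheck V/\widecheck\Gamma$ directly. Instead, from the B-side nodal data it builds an abstract graded surface $\Sigma$ by gluing columns of annuli according to permutations read off from the characters at the nodes, applies the general Theorems (\cref{WrappedTheorem}, \cref{HMSTheorem}) to get HMS for $\Sigma$, and only at the very end proves $\Sigma \simeq \widecheck V/\widecheck\Gamma$ as graded surfaces. This last step is done by comparing ribbon graphs: one analyses how the $\widecheck\Gamma$-action (which is by $(\xi,\xi^{-1})$ and hence free away from the origin, giving the freeness claim immediately) identifies nodes and edges of the ribbon graph of $\widecheck V$ constructed in \cite{HabermannHMS}, and checks that the quotient graph matches the one for $\Sigma$. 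The line field is then pinned down by observing that all pushforwards of vanishing cycles are gradable, forcing the horizontal line field up to homotopy. Your proposal to descend generators from $\widecheck V$ and compute directly would likely also succeed, but the paper's route has the advantage that all the Floer-theoretic work is encapsulated once and for all in the HKK machinery for the abstractly-glued surface.
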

\begin{rmk}
	It should be reiterated that, although there is a trichotomy of cases depending on the weight $d_0$, all but one invertible polynomials in two variables are of log general type, and this exception is well-understood. We are therefore free to state Theorem \ref{InvertiblePolyTheorem} in the context of invertible polynomials of log general type without further assumptions. 
\end{rmk}
\begin{rmk}
In general, it is not true that the action of $\widecheck{\Gamma}$ on the Milnor fibre will be free; however, the fixed points of the action on $\C^n$ always lie on the divisor $\{\check{x}_1\dots\check{x}_n=0\}$, meaning that the action on the \emph{very affine Milnor fibre} $\widecheck{V}\cap(\C^*)^n$ considered in \cite{GammageHMS} is free.
\end{rmk} 
Our proof of \cref{InvertiblePolyTheorem} is orthogonal to previous approaches to \cref{LUConj}, and instead follows the more abstract construction of \cite{LekiliPolishchukAuslander}, before checking that this agrees with the predictions of \cref{LUConj}. \\
In \cite{LekiliPolishchukAuslander}, the authors consider chains and cycles of weighted projective lines meeting nodally at a point whose isotropy group is finite and cyclic, and where at most two points of an irreducible component can have non-trivial isotropy group. The mirror symplectic surfaces are constructed by gluing together the mirrors to the irreducible components of the curve via a permutation determined by the stacky structure of the node. \\
Stacky curves in the above presentation were first considered in \cite{STZ} as part of the coherent-constructible correspondence, although the work of Lekili and Polishchuk generalises the class of curves able to be considered by allowing the stacky structure at the nodes to be \emph{non-balanced}. Roughly speaking, the condition that the node be balanced means that the gluing maps of the mirror are the identity. Moreover, allowing non-balanced curves is crucial in the construction of mirrors to symplectic surfaces of genus greater than one, and this will continue to be important for us. \\
In our approach to \cref{InvertiblePolyTheorem}, we expand on the results of Lekili--Polishchuk to allow for the irreducible components to have non-trivial generic stabiliser, and then consider the B--model of \cref{InvertiblePolyTheorem} in the abstract as such a cycle of nodal stacky curves. We then show that the A--model which we construct as its mirror is graded symplectomorphic to the quotient of the Milnor fibre of the transpose polynomial by the corresponding Berglund--Henningson dual group. Fortunately, the construction of the \emph{Auslander sheaf} -- a certain sheaf of non-commutative algebras -- in the orbifold case studied in \cite{LekiliPolishchukAuslander} is robust, and carries through with only minor alteration when one includes non-trivial generic stabilisers. \\

Let $\P_{r_{i,-},r_{i,+}}$ be the orbifold $\P^1$ with orbifold points $(q_{i,-},q_{i,+})$ such that $\Aut\ q_{{i,-}} \simeq\mu_{r_{{i,-}}}$ and $\Aut\ q_{i,+}\simeq\mu_{r_{i,+}}$ (in the chain case we are allowing $r_{1,-}=0$, and/ or $r_{n,+}=0$, so that the corresponding irreducible component is a (stacky) $\A^1$).
\begin{mthm}\label{WrappedTheorem}
	Let $\mathcal{C}$ be the Deligne--Mumford stack such that:
	\begin{itemize}
		\item The coarse moduli space of $\mathcal{C}$ is a cycle or chain of $n$ $\P^1$'s.
		\item Each irreducible component, $\mathcal{C}_i$, has underlying orbifold $\P_{r_{i,-},r_{i,+}}$ and generic stabiliser $\mu_{d_i}$ such that $r_{i,+}d_i=r_{i+1,-}d_{i+1}$ (we allow $r_{1,-}$ and/ or $r_{n,+}=0$ in the case of a chain of curves). 
		\item The node $q_i:=|\mathcal{C}_{i}|\cap|\mathcal{C}_{i+1}|$ has isotropy group $H_i$ and is presented as the quotient of  $\Spec \C[x,y]/(xy)$ by $H_i$, where the action is given by
		\begin{align*}
		h\cdot (x,y)=(\psi_{i,+}(h)x, \psi_{i+1,-}(h)y)
		\end{align*}
		for some surjective $\psi_{i,+}: H_i\rightarrow \mu_{r_{i,+}}$ and $\psi_{i+1,-}:H_i\rightarrow \mu_{r_{i+1,-}}$.
	\end{itemize}
	Then
	\begin{align*}
	D^b(\mathcal{A}_\mathcal{C}-\mathrm{mod})\simeq \mathcal{W}(\Sigma; \Lambda)
	\end{align*}
	is a quasi-equivalence of $\Z$-graded pre-triangulated $A_\infty$-categories over $\C$, where $\Sigma$ is a $\Z$-graded, $b$-punctured surface of genus $g$ such that the genus, boundary components, and collection of stops, $\Lambda$, are determined by the $r_{i,\pm}$, $d_i$, and the local presentation of the nodes as the quotient by $H_i$. 
\end{mthm}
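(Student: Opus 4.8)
The plan is to run the two-sided strategy of \cite{LekiliPolishchukAuslander}, adapted to the gerbe--twisted setting. First I would realise $D^b(\mathcal{A}_\mathcal{C}-\mathrm{mod})$ as the perfect derived category of an explicit $\Z$-graded gentle algebra $E$ by exhibiting a tilting object; then I would realise $\mathcal{W}(\Sigma;\Lambda)$, via the surface model of \cite{HKK}, as $\perf$ of a gentle algebra; and finally I would identify the two gentle algebras, compatibly with their $\Z$-gradings. Since the Auslander sheaf $\mathcal{A}_\mathcal{C}$ is a categorical resolution and hence has finite homological dimension, $D^b(\mathcal{A}_\mathcal{C}-\mathrm{mod})=\perf(\mathcal{A}_\mathcal{C})$, so both sides of the asserted equivalence are $\perf$ of a gentle algebra and the problem collapses to a combinatorial matching plus an appeal to Morita/tilting formalism.

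On the B--side, I would begin by fixing, for each irreducible component $\mathcal{C}_i$, a rigidification of its generic stabiliser, i.e. a $\mu_{d_i}$-gerbe structure over the orbifold $\P_{r_{i,-},r_{i,+}}$; concretely this is supplied by the root-stack and toric Deligne--Mumford presentations of \cite{Cadman2003UsingST}, \cite{BCS}. With this choice, the Burban--Drozd construction of the Auslander sheaf as generalised in \cite{LekiliPolishchukAuslander} carries over with only cosmetic changes: away from the nodes $\mathcal{A}_\mathcal{C}$ is the endomorphism algebra of $\OO\oplus\F$ for an appropriate coherent $\F$ encoding the orbifold and gerbe data, while near each node $q_i$ it is the Auslander order of $\C[x,y]/(xy)$ made $H_i$-equivariant through $(\psi_{i,+},\psi_{i+1,-})$ — and the numerical constraint $r_{i,+}d_i=r_{i+1,-}d_{i+1}$ is precisely what makes these local models glue. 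I would then write down a tilting object built from the line bundles on the gerbes over the components, together with the simple modules at the nodes and at the orbifold points, compute its endomorphism algebra $E$, check that $E$ is gentle, and verify that the $\Z$-grading coming from the (log general type) weight data descends to $E$. Finally I would show that a different choice of gerbe structure changes $\mathcal{A}_\mathcal{C}$ only up to a twist by a line bundle on the gerbe, hence yields a derived-equivalent module category, so that the statement is independent of the auxiliary choice.

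On the A--side, I would build $\Sigma$ by assigning to each component $\mathcal{C}_i$ an annulus carrying stops on its two boundary circles — the mirror of $\P_{r_{i,-},r_{i,+}}$ twisted by the $\mu_{d_i}$-gerbe, with the number of stops and the ambient line field dictated by $r_{i,\pm}$ and $d_i$ — and then gluing these annuli cyclically, or in a chain with free boundary where $r_{1,-}$ or $r_{n,+}$ vanishes, along pieces of boundary; the interleaving of the stops at the $i$-th gluing is governed by the surjections $\psi_{i,\pm}\colon H_i\to\mu_{r_{i,\pm}}$. The genus $g$ and the number $b$ of punctures are read off from this combinatorics, and the $\Z$-grading is the line field obtained by projectivising a vector field, its global consistency again reflecting the weight data. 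Applying \cite{HKK}, $\mathcal{W}(\Sigma;\Lambda)$ is generated by arcs dual to the gluing data with formal gentle endomorphism $A_\infty$-algebra, and a direct comparison of quivers, relations, and gradings identifies this algebra with $E$; the equivalence then follows.

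The main obstacle is the gerbe bookkeeping, which is exactly the new ingredient beyond \cite{LekiliPolishchukAuslander}: since the generic stabilisers are torsors rather than groups, the Auslander sheaf is only defined after rigidifying, and the work is (i) setting up that construction functorially enough to compute the tilting object and its endomorphism algebra in the $H_i$-equivariant local model at each node, (ii) proving independence of $D^b(\mathcal{A}_\mathcal{C}-\mathrm{mod})$ from the gerbe structure, and (iii) matching, on the nose and compatibly with the $\Z$-gradings, the algebra $E$ with the gentle algebra read off from $\Sigma$ — in particular getting the $\psi_{i,\pm}$-governed interleaving of stops exactly right. Once these identifications are in place, the remainder — the passage from tilting object to derived equivalence on the B--side and the appeal to \cite{HKK} on the A--side — is essentially formal.
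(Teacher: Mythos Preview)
Your strategy is the paper's strategy: exhibit a tilting object in $D^b(\mathcal{A}_\mathcal{C}-\mathrm{mod})$ whose endomorphism algebra is gentle, build $(\Sigma,\Lambda)$ so that the generating arcs from \cite{HKK} have the same gentle algebra, and match. The gerbe bookkeeping you describe (fix a root-stack structure on each component, work $H_i$-equivariantly at each node, check independence of the choice) is exactly what the paper does, with independence coming from the orthogonal decomposition $\Coh(\sqrt[d]{\mathscr{L}/\P_{a,b}})\simeq(\Coh\P_{a,b})^{\oplus d}$ rather than a line-bundle twist.

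Two points where your outline diverges from what actually works. First, on the A--side you assign to $\mathcal{C}_i$ \emph{an} annulus; the correct building block is a \emph{column of $d_i$ disjoint annuli} $A(r_{i,-},r_{i,+};d_i)$, one for each summand of the Ishii--Ueda decomposition. This is not cosmetic: a single annulus with $d_ir_{i,\pm}$ stops has one waist class, whereas $d_i$ annuli have $d_i$, so the resulting $\Sigma$ (and its gentle algebra) would be wrong. The permutations $\sigma_i\in\mathfrak{S}_{d_ir_{i,+}}$ that glue columns are then read off from the character equation $-j\chi_{r_{i+1,-}}+k_-\chi_{d_{i+1,-}}=-m\chi_{r_{i,+}}+k_+\chi_{d_{i,+}}$ in $\widehat{H}_i$, which is how $\psi_{i,\pm}$ enters. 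Second, you do not say how to handle the chain case with $r_{1,-}=0$ and/or $r_{n,+}=0$. There the component is non-proper, the exceptional collection you describe is not available directly, and the paper instead compactifies to $\overline{\mathcal{C}}$ with $r_{1,-}=1$, proves the equivalence for $\overline{\mathcal{C}}$, and then identifies the passage $\overline{\mathcal{C}}\rightsquigarrow\mathcal{C}$ with removing the corresponding stops on $\Sigma$ via the localisation $D^b(\mathcal{A}_{\overline{\mathcal{C}}}-\mathrm{mod})/\langle\bigoplus_k\mathcal{E}_1^-(0,k)\rangle\simeq D^b(\mathcal{A}_{\mathcal{C}}-\mathrm{mod})$, using the matching $\mathcal{E}_i^\pm(j,k)\leftrightarrow E_{i,\bullet}^\pm[-1]$. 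Your sketch skips this step.
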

\begin{rmk}
	Note that our presentation agrees with the orbifold case when each $d_i=1$ by observing that one can always arrange the action of $H_i\simeq\mu_{r_{i}}$ to be such that $\psi_{i+1,-}=\text{id}$, and $\psi_{i,+}:\mu_{r_i}\xrightarrow{\wedge^{\kappa_i}}\mu_{r_i}$ for some $\kappa_i\in(\Z/r_{i})^\times$. The notion of a node being \emph{balanced}, mentioned above and appearing in the work of \cite{STZ}, is the condition that $\kappa_i=-1$.
\end{rmk}
\begin{rmk} 
	Whilst we do not use this language, the above result, as well as \cref{HMSTheorem} below, can also be interpreted as an application of \cite{GPSDescent}.
\end{rmk}
It is well-known that passing to the quotient by an abelian group on the B--side corresponds to a cover on the A--side by the dual group; the main difficulty in proving \cref{WrappedTheorem} is in keeping track of these groups and how this structure `fits together' at intersection points of the irreducible components. This is the data of the presentation of the $H_i$-action at a node, and corresponds to a \emph{choice} of gerbe structure on each irreducible component, as reviewed in \cref{root stack section}. As part of the proof, we show that the resulting categories are independent of this choice. \\
Following \cite{LekiliPolishchukAuslander}, when referring to a specific configuration of points on the $b$ boundary components of $\Sigma$, we will denote the partially wrapped Fukaya category by $\mathcal{W}(\Sigma; m_1,m_2,\dots, m_b)$, where $m_i$ is the number of stops on the $i^{\text{th}}$ boundary component. When there are $d$ boundary components with $m$ stops, we shall notate this as $(m)^d$.  \\

By identifying localising subcategories on the A-- and B--sides in \cref{WrappedTheorem}, we get: 
\begin{mthm}\label{HMSTheorem}
	Let $\mathcal{C}$ and $\Sigma$ be as in Theorem \ref{WrappedTheorem}. Then
	\begin{align*}
	\perf \mathcal{C}&\simeq\mathcal{F}(\Sigma)\\
	D^b\Coh\mathcal{C}&\simeq \mathcal{W}(\Sigma),
	\end{align*}
	are quasi-equivalences of $\Z$-graded pre-triangulated $A_\infty$-categories over $\C$ in the case of a cycle of curves. In the case of a chain of curves, there are quasi-equivalences of pre-triangulated $A_\infty$-categories over $\C$
	\begin{align*}
	\perf_c \mathcal{C}&\simeq\mathcal{F}(\Sigma;(r_{1,-})^{d_1},(0)^{b-d_1-d_n},(r_{n,+})^{d_n})\\
	D^b\Coh(\mathcal{C})&\simeq\mathcal{W}(\Sigma;(r_{1,-})^{d_1},(0)^{b-d_1-d_n},(r_{n,+})^{d_n}),
	\end{align*}
	where $\perf_c\mathcal{C}$ is the full subcategory of $\perf\mathcal{C}$ consisting of objects with proper support. 
\end{mthm}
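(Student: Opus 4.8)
The plan is to deduce this statement from Theorem \ref{WrappedTheorem} by identifying, on both sides of the equivalence $D^b(\mathcal{A}_\mathcal{C}\text{-}\mathrm{mod})\simeq\mathcal{W}(\Sigma;\Lambda)$, a localising subcategory whose Verdier quotient recovers the category we actually want, and then checking that the equivalence carries one subcategory to the other. On the B--side, the Auslander sheaf $\mathcal{A}_\mathcal{C}$ comes with a canonical idempotent (the projection onto the `large' summand of the Auslander order at each node), and quotienting $D^b(\mathcal{A}_\mathcal{C}\text{-}\mathrm{mod})$ by the subcategory generated by the modules supported at the exceptional fibres of the categorical resolution $\mathcal{A}_\mathcal{C}\text{-}\mathrm{mod}\to\Coh\mathcal{C}$ recovers $D^b\Coh\mathcal{C}$; this is the stacky analogue of the Burban--Drozd/Lekili--Polishchuk picture and, as the excerpt notes, goes through with only minor alterations once the gerbe structure has been fixed, since the derived category is independent of that choice. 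On the A--side, the corresponding operation is the removal of the stops in $\Lambda$: by the general formalism of partially wrapped Fukaya categories (Auroux, Sylvan, and the surface case of \cite{HKK}), removing a stop localises at the Lagrangian cocores supported there, and removing \emph{all} stops yields the fully wrapped category $\mathcal{W}(\Sigma)$. So the first main step is to verify that under the equivalence of Theorem \ref{WrappedTheorem} the distinguished generators of the `node' subcategory on the B--side correspond precisely to the stop-supported Lagrangians on the A--side; this is a finite check on the explicit tilting object and its image, component by component and node by node.

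The second step is the compact/perfect half. On the B--side, $\perf\mathcal{C}$ (resp. $\perf_c\mathcal{C}$ in the chain case) sits inside $D^b\Coh\mathcal{C}$ as the subcategory of objects that are, locally, perfect complexes (resp. perfect with proper support), and one characterises its preimage in $D^b(\mathcal{A}_\mathcal{C}\text{-}\mathrm{mod})$ intrinsically — again following \cite{LekiliPolishchukAuslander} — as the complexes whose cohomology modules are locally projective over $\mathcal{A}_\mathcal{C}$ away from the nodes and satisfy the appropriate gluing condition at the nodes. On the A--side, the mirror statement is that $\mathcal{F}(\Sigma)$, the Fukaya category of compact (closed) Lagrangians, is the subcategory of $\mathcal{W}(\Sigma;\Lambda)$ generated by the compact Lagrangians, and by the combinatorial model of \cite{HKK} these are exactly the immersed closed curves, which one reads off from the arc system. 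The task is to match the two characterisations; in the cycle case this is clean because $\mathcal{C}$ is proper and $\Sigma$ has no stops to begin with in the relevant sense, while in the chain case one must keep careful track of the specified stop configuration $(r_{1,-})^{d_1},(0)^{b-d_1-d_n},(r_{n,+})^{d_n}$, which is dictated by the two `open ends' of the chain (the possibly-stacky $\A^1$ components) and by the gerbe multiplicities $d_1,d_n$. Concretely one must check that these stops are exactly the ones whose removal passes from $\perf_c\mathcal{C}$ to $\perf\mathcal{C}$ — i.e. that proper support on the B--side matches compactness of the bounding Lagrangian on the A--side — and then the two displayed equivalences in the chain case follow from Theorem \ref{WrappedTheorem} by the same localisation argument restricted to this stop set.

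The third step is bookkeeping that the equivalences are $\Z$-graded: the grading data on the B--side comes from the choice of weight system (equivalently, the $\G$-action in the Orlov-type picture), and on the A--side from the line field on $\Sigma$ that the construction produces; since Theorem \ref{WrappedTheorem} is already asserted as a $\Z$-graded equivalence, this is inherited by all subquotients, and one only needs to note that the stop-removal and Verdier-quotient operations are grading-preserving. I expect the main obstacle to be the node-by-node matching in the first step: one must be genuinely careful that the local presentation of each node as $[\Spec\C[x,y]/(xy)\,/\,H_i]$ with the two surjections $\psi_{i,+},\psi_{i+1,-}$ produces, on the A--side, exactly the predicted number and cyclic ordering of stops and boundary components, and that the generic stabiliser $\mu_{d_i}$ (the gerbe) does not leak into this count beyond the constraint $r_{i,+}d_i=r_{i+1,-}d_{i+1}$. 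This is where the independence-of-gerbe claim has to be used carefully: different gerbe structures give different-looking $\mathcal{A}_\mathcal{C}$ but equivalent derived categories, and one wants the A--side combinatorics — the genus $g$ and boundary count $b$ of $\Sigma$ — to be manifestly insensitive to that choice, which I would verify directly from the Riemann--Hurwitz/Euler-characteristic computation underlying the construction of $\Sigma$. Once this matching is pinned down, the remaining arguments are formal consequences of Theorem \ref{WrappedTheorem} and the localisation properties of partially wrapped Fukaya categories.
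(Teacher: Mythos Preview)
Your overall strategy is correct and matches the paper's: deduce both equivalences from Theorem~\ref{WrappedTheorem} by first matching a localising subcategory $\mathcal{T}$ on each side to obtain $D^b\Coh\mathcal{C}\simeq\mathcal{W}(\Sigma)$, and then characterising $\perf$ and $\mathcal{F}$ as distinguished subcategories. However, for the second step the paper uses a sharper characterisation than the one you sketch, and without it your argument has a genuine gap.

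Rather than describing the image of $\perf\mathcal{C}$ in $D^b(\mathcal{A}_\mathcal{C}\text{-}\mathrm{mod})$ via local projectivity plus gluing, and separately describing $\mathcal{F}(\Sigma)$ as the subcategory generated by closed curves, the paper shows that \emph{both} are identified with the two-sided orthogonal $\mathcal{T}^\perp={}^\perp\mathcal{T}$ of the \emph{same} $\mathcal{T}$ already matched in step one. On the B--side this is the content of \cite[Proposition~4.1.3]{LekiliPolishchukAuslander} (with $\mu_r$ replaced by $H$), using the embedding $G\mapsto\mathcal{F}\otimes_{\OO_\mathcal{C}}G$; on the A--side it is \cite[Corollary~4.2.2]{LekiliPolishchukAuslander}, whose proof (a direct Floer computation showing any Lagrangian ending on a boundary component has a nonzero morphism to or from a stop-supported Lagrangian) goes through verbatim. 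Once the generators of $\mathcal{T}$ are matched under Theorem~\ref{WrappedTheorem}---which is precisely the correspondence $\mathcal{E}_i^\pm(j,k)\leftrightarrow E_{i,j}^\pm$ established there---the compact/perfect equivalence is automatic. Your two characterisations are both true statements, but you give no mechanism to compare them directly; the orthogonality description is what provides that bridge.

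Two further remarks. Your concern about node-by-node matching and gerbe-independence of the surface combinatorics is misplaced here: that is the content of Theorem~\ref{WrappedTheorem}, and Theorem~\ref{HMSTheorem} merely inherits it. And your reading of the chain case is slightly off: the stops $(r_{1,-})^{d_1},(r_{n,+})^{d_n}$ on the distinguished boundary components are \emph{retained}, not removed; they correspond to the non-nodal stacky points $q_{1,-},q_{n,+}$. The distinction between $\perf_c$ and $\perf$ arises only when $r_{1,-}$ or $r_{n,+}$ vanishes (so $\mathcal{C}$ is non-proper), and is handled by enlarging $\mathcal{T}$ to include the extra objects $\mathcal{E}_1^-(0,k)$ or $\mathcal{E}_n^+(0,k)$ and running the same orthogonality argument.
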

It should be emphasised that the choice of grading on the surface in the above theorems is a crucial piece of data. Changing it would change the grading of the endomorphism algebra of the generating Lagrangians, and, in general, would not yield a derived equivalent algebra. Moreover, taking $\perf_c\mathcal{C}$ in the case of a cycle of curves is only necessary when $r_{1,-}$ and/ or $r_{n,+}=0$. The category $\mathcal{F}(\Sigma;m_1,m_2,\dots,m_n)$ is the \emph{infinitesimally wrapped Fukaya category} of \cite{NadlerZaslow} (cf. \cite{GanatraPardonShendeMicrolocal}).\\

With this in hand, we establish \cref{InvertiblePolyTheorem} by first explicitly exhibiting the B--model as a cycle of weighted projective lines, and then constructing the mirror manifold according to Theorems \ref{WrappedTheorem} and \ref{HMSTheorem}. To finish the proof, we then demonstrate that these manifolds are graded symplectomorphic to the A--models appearing in \cref{InvertiblePolyTheorem}.

\subsection{Structure of paper}
In Section \ref{OrderSection}, we review the theory of Auslander orders over nodal (stacky) curves, incorporating the necessary alterations of the orbifold case to allow for non-trivial generic stabilisers. \cref{Pwrapped Section} is an application of the theory developed in \cite{HKK} to the curves on the A--side which we consider, whilst Sections \ref{LocalisationSection} and \ref{PerfectObjectsSection} exposit the changes to the localisation argument and perfect complex characterisation of \cite{LekiliPolishchukAuslander} required to prove Theorems \ref{WrappedTheorem} and \ref{HMSTheorem}. The computational heart of the paper is in \cref{Applications section}, which is devoted to the proof of \cref{InvertiblePolyTheorem}. We recall the basic constructions of root stacks, both with and without section, in the context of how use them in \cref{root stack section}.
\subsection{Conventions} We work over $\C$ throughout. For a Deligne--Mumford (DM) stack $\mathcal{X}$ we write $x\in\mathcal{X}$ to mean $x:\Spec \C\rightarrow \mathcal{X}$, and let $|\mathcal{X}|$ be its underlying topological space. We refer to a DM stack with trivial generic stabiliser as an orbifold. All Fukaya categories are completed with respect to cones and direct summands.
\subsection{Acknowledgements} 
This work was completed while the author was a PhD student, and he would like to thank his PhD supervisor Yank\i~Lekili for suggesting the project and his guidance throughout.  He would also like to thank Jack Smith, Dougal Davis, Ed Segal, Alessio Corti, and Tim Magee for their interest in this work, and answering the author's many questions during the early stages of this project. This work was supported by the Engineering and Physical Sciences Research Council [EP/L015234/1], The EPSRC Centre for Doctoral Training in Geometry and Number Theory (The London School of Geometry and Number Theory), University College London. The author gratefully acknowledges support from the University of Hamburg and the Deutsche Forschungsgemeinschaft under Germany's Excellence Strategy -- EXC 2121 ``Quantum Universe" -- 390833306.
\section{Auslander orders}\label{OrderSection}
In this section, we give a brief account of the theory of Auslander orders over curves, as introduced in \cite{Burban2009TiltingON} and expanded upon in \cite{LekiliPolishchukAuslander}, before constructing the relevant (mild) generalisation. These are sheaves of non-commutative algebras, initially introduced to study non-commutative resolutions of the subcategory consisting of perfect complexes of the derived category of coherent sheaves on certain curves. \\

Before moving on to the situation we are focusing on, it is instructive to review the non-stacky case, as in \cite{Burban2009TiltingON}. Let $C$ be a chain or cycle of $\P^1$'s joined nodally, and $\pi:\widetilde{C}\rightarrow C$ its normalisation (i.e. a disjoint union of $\P^1$'s). Let $\mathcal{I}$ be the ideal sheaf of the singular locus, and define the sheaf of $\OO_{C}$-algebras
\begin{align}\label{Fdefn}
\mathcal{F}:=\begin{pmatrix}
\mathcal{I}\\
\OO_C
\end{pmatrix}.
\end{align}
One can then define the \emph{Auslander sheaf} as 
\begin{align}\label{ACDefn}
\mathcal{A}_C=\mathcal{E}nd_{\OO_C}(\mathcal{F})=\begin{pmatrix}
\widetilde{\OO}_C & \mathcal{I}\\
\widetilde{\OO}_C & \OO_C
\end{pmatrix},
\end{align}
where $\widetilde{\OO}_C=\pi_*\OO_{\widetilde{C}}$. In \cite{Burban2009TiltingON}, the authors study the category of finitely generated left $\mathcal{A}_C$-modules on the ringed space $(C,\mathcal{A}_C)$. Their main result is that $D^b(\mathcal{A}_C-\mathrm{mod})$ has a tilting object, and is a categorical resolution of $\perf C$. They also show that $D^b\Coh(C)$ is equivalent to the localisation of $D^b(\mathcal{A}_C-\mathrm{mod})$ by a certain subcategory of torsion modules, yielding the sequence
\begin{align}\label{AGSESCategories}
\perf{C}\hookrightarrow D^b(\mathcal{A}_C-\mathrm{mod})\rightarrow D^b(C)
\end{align}
\begin{rmk}
In \cite{Burban2009TiltingON}, the authors work with triangulated categories, however, these have unique dg-enhancements by the work of \cite{OrlovLuntsDG}. In particular, Section 8 of \textit{loc. cit.} shows that all categories considered here, including the categories which result from localisation which we will discuss in Section \ref{LocalisationSection}, have unique dg-enhancements. We are therefore free to work with triangulated categories, since results established here lift to the dg-setting. 
\end{rmk}
In \cite{LekiliPolishchukAuslander}, the authors build on the construction of \cite{Burban2009TiltingON} to allow for the nodes to have stacky structure, meaning that the irreducible components are orbifold curves of the form $\P_{a,b}$, and where two irreducible components meet at an orbifold point. We further extend this approach to allow for the irreducible components to have non-trivial generic stabiliser, although the arguments in \cite{LekiliPolishchukAuslander} carry over to our situation with only minor alterations.  \\

Let $\mathcal{C}$ be as in Theorem \ref{WrappedTheorem} and choose a compatible gerbe structure on each irreducible component, meaning that the local model about $q_{i,\pm}$ is compatible with the maps $\psi_{i,\pm}$. This can always be done by taking the root of a line bundle on $\P_{r_{i,-},r_{i,+}}$ whose restriction under \eqref{projectingMV} yields short exact sequences compatible with the action of the isotropy group at the nodes. Two compatible gerbe structures on an irreducible component will differ by how the two patches are identified on overlaps, but by \eqref{IshiiUedaRootStack}, this does not affect our theory. To ease notation, we let $\P_i=\P_{r_{i,-},r_{i,+}}$  be the rigidified $i^{\text{th}}$ irreducible component of $\mathcal{C}$. Let 
\begin{align*}
\pi:\widetilde{\mathcal{C}}=\bigsqcup_{i=1}^n\widetilde{\mathcal{C}}_i\rightarrow \mathcal{C}
\end{align*}
 be the normalisation map, and $H_i$ the isotropy group at the node $q_i=|\mathcal{C}_i|\cap| \mathcal{C}_{i+1}|$, and $H_0$ and $H_n$ the isotropy groups of the points $q_{1,-}$ and $q_{n,+}$, respectively, in the chain case. At the points $q_{i,+}$ and $q_{i+1,-}$, there are, by construction, short exact sequences
\begin{align}
&1\rightarrow \mu_{d_{i}}\rightarrow H_{i,+}\xrightarrow{\psi_{i,+}} \mu_{r_{i,+}}\rightarrow 1,\quad\text{and} \label{SES1}\\
&1\rightarrow \mu_{d_{i+1}}\rightarrow H_{i+1,-}\xrightarrow{\psi_{i+1,-}} \mu_{r_{i+1,-}}\rightarrow 1 \label{SES2}.
\end{align}
There are (non-canonical) isomorphisms $H_i\simeq H_{i,+}\simeq H_{i+1,-}$, although by choosing the representatives of \eqref{SES1} and \eqref{SES2} such that $\psi_{i,+}$ (resp. $\psi_{i+1,-}$) are as in Theorem \ref{WrappedTheorem}, one can take these identifications to be the identity. This yields the local model of $q_i$.
\begin{rmk}
It should be emphasised that, even when it would make sense, we do not require that the short exact sequences \eqref{SES1}, \eqref{SES2} are equivalent such that $H_{i,+}\simeq H_{i+1,-}$ via the identity map, only that the groups $H_{i,+}$ and $H_{i+1,-}$ can be identified with $H_i$. 
\end{rmk}
Recall that the ideal sheaf of a closed substack is the sheaf which pulls back to the ideal sheaf of the preimage in any atlas. As such, we define
\begin{align*}
\mathcal{I}=\bigoplus_{i=1}^n \pi_{i*}\OO_{{\widetilde{\mathcal{C}}}_i}(-q_{i,-}-q_{i,+})
\end{align*}
for a cycle of curves, and analogously for a chain. Here $\pi_i:\widetilde{\mathcal{C}}_i\rightarrow \mathcal{C}$ is again the natural projection. We let $\mathcal{F}$ be as in \eqref{Fdefn} and  $\mathcal{A}_{\mathcal{C}}$ be as in \eqref{ACDefn}. For any integers $j$, $m$, and $k\in\{0,\dots, d_i-1\}$, we define distinguished $\mathcal{A}_{\mathcal{C}}$-modules
\begin{align*}
\mathcal{P}_i(j,m,k)=\begin{pmatrix}
\pi_{i*}\big(\OO_{\widetilde{\mathcal{C}}_i}(jq_{i,-}+mq_{i,+})\otimes \mathcal{N}_i^{\otimes k}\big)\\
\pi_{i*}\big(\OO_{\widetilde{\mathcal{C}}_i}(jq_{i,-}+mq_{i,+})\otimes \mathcal{N}_i^{\otimes k}\big)
\end{pmatrix}.
\end{align*}
For fixed integers $j$, $m$, and $0\leq k\leq d_i-1$, let $\Exc_i(j,m,k)$ be the collection 
\begin{equation}\label{Auslander exceptional collection}
\begin{tikzcd}
\mathcal{P}_i(j,m,k) \arrow[r,"x_i"] \arrow[d,equal] & \mathcal{P}_i(j+1,m,k) \arrow[r,"x_i"] & \dots \arrow[r,"x_i"] & \mathcal{P}_i(j+r_{i,-}-1,m,k) \arrow[r,"x_i"] & \mathcal{P}_i(j+r_{i,-},m,k)  \arrow[d,equal]\\
\mathcal{P}_i(j,m,k) \arrow[r,"y_i"] &\mathcal{P}_i(j,m+1,k) \arrow[r,"y_i"] & \dots \arrow[r,"y_i"] & \mathcal{P}_i(j,m+r_{i,+}-1,k) \arrow[r,"y_i"]& \mathcal{P}_i(j,m+r_{i,+},k)
\end{tikzcd}
\end{equation}
Note that by the decomposition \eqref{IshiiUedaRootStack}, we have that $\mathcal{P}_i(j,m,k)$ is orthogonal to $\mathcal{P}_{i'}(j',m',k')$ unless $k=k'$. With this, it follows directly from the proof of \cite[Lemma 1.2.1]{LekiliPolishchukAuslander} that the modules $\mathcal{P}_i(j,m,k)$ are exceptional, and $\Exc_i(j,m,k)$ is an exceptional collection for any fixed $j,\ m$, and $k\in\{0,\dots, d_i-1\}$. In the case of $d_{i}=1$ we omit $k$ from the notation.\\

As in the non-stacky and orbifold cases we also define simple modules at each node, given by 
\begin{align*}
\mathcal{S}_q=\begin{pmatrix}
0\\
\OO_q
\end{pmatrix}.
\end{align*}

Fixing an identification of the isotropy group of the node $q_i=|\mathcal{C}_i|\cap|\mathcal{C}_{i+1}|$ with $H_{i}$ (for $i$ counted modulo $n$ in the cycle case), let $\psi_{i,+}$ and $\psi_{i+1,-}$ be as in Theorem \ref{WrappedTheorem} and fit into the short exact sequences \eqref{SES1} and \eqref{SES2}, respectively. We have that locally, around $q_i$, we can view $\mathcal{A}_{\mathcal{C}}$--modules as equivariant $H_{i}$ modules on $\Spec \C[x,y]/(xy)=\Spec S$, where the $H_i$ action is given by $h\cdot (x,y)=(\psi_{i,+}(h)x,\psi_{i+1,-}(h)y)$. We fix the $\mu_{r_{i+1,-}}$ action on the fibre of the sheaf $\OO_{\P_{i+1}}(-q_{i+1,-}) $ at $q_{i+1,-}$ to be via its natural character, and similarly for the action of $\mu_{r_{i,+}}$ on the fibre of the sheaf $\OO_{\P_{i}}(-q_{i,+})$ at $q_{i,+}$. Moreover, we define the character corresponding to the weight of the action of $H_i$ on the fibre of $\OO_{\widetilde{\mathcal{C}}_{i+1}}(-q_{i+1,-})$ to be the character induced from the natural character of $\mu_{r_{i+1,-}}$ under the dual of $\psi_{i+1,-}$, and we call this $\chi_{r_{i+1,-}}$. We define $\chi_{r_{i,+}}$ similarly as the character of $H_i$ induced by the natural character under the dual of $\psi_{i,+}$. For the chosen gerbe structure, choose $\chi_{d_{i,\pm}}$ such that $d_{i,\pm}\chi_{d_{i,\pm}}=\chi_{r_{i,\pm}}$ as in \cref{root stack section}.\\

 Since $H_{i}$ is diagonalisable (is isomorphic to a closed subgroup of an algebraic torus), we have an eigenspace decomposition of an $H_{i}$-equivariant $S$-module $M$ as 
\begin{align*}
M=\bigoplus_{\chi\in \widehat{H}_{i}}M_{\chi},
\end{align*}
where $\widehat{H}_{i}$ is the group of characters of $H_{i}$. Furthermore, for any $\chi\in \widehat{H}_{i}$ there is a twisting operation $M\mapsto M\{\chi\}$ which identifies the $\gamma$-eigenspace of $M\{\chi\}$ with the $(\chi+\gamma)$-eigenspace of $M$. \\

For a chain (resp. cycle) of nodal stacky curves, consider a tuple of characters $\boldsymbol{\chi}=(\chi_0,\dots,\chi_{n+1})\in \widehat{H}_0\oplus \dots \oplus \widehat{H}_{n+1}$ (resp. $\boldsymbol{\chi}=(\chi_1,\dots,\chi_{n})\in \widehat{H}_1\oplus\dots\oplus\ \widehat{H}_{n}$). We call such a tuple \emph{admissible} if there exists a line bundle $\OO_{{\widetilde{\mathcal{C}}}_i}(jq_{i,-}+ mq_{i,+})\otimes \mathcal{N}_i^{\otimes k}$ on each $\widetilde{\mathcal{C}}_i$ such that $H_{i-1}$ acts on the fibre at $q_{i,-}$ with character $\chi_{i-1}$ and $H_i$ on the fibre at $q_{i,+}$ with character $\chi_i$. Denote by $\widehat{H}_{\text{ad}}$ the set of admissible characters. It is not true that $\widehat{H}_{\text{ad}}$ contains any tuple of characters; however, for any character $\chi\in \widehat{H}_i$ there is a tuple in $\widehat{H}_{\text{ad}}$ such that $\chi_i=\chi$. For each admissible $\boldsymbol{\chi}$, we define the sheaf $\mathcal{M}\{\boldsymbol{\chi}\}$ by gluing the line bundles of the above form together at the nodes. \\

Consider the map $p:\mathcal{C}\rightarrow C$, where $C$ is the coarse moduli space of the stacky curve, i.e. is a chain or cycle of $\P^1$ joined nodally. Following \cite{OlssonStarr}, we call a sheaf $\mathcal{E}$ on $\mathcal{C}$ an \emph{generator} of $\text{QCoh}(\mathcal{C})$ with respect to $p$ if the natural map 
\begin{align*}
p^*(p_*\mathcal{H}om_{\OO_{\mathcal{C}}}(\mathcal{E},\mathcal{G}))\otimes \mathcal{E}\rightarrow \mathcal{G}
\end{align*}
is a surjection for any $\mathcal{G}$. 

\begin{lem}
The sheaf
\begin{align*}
\bigoplus_{\boldsymbol{\chi}\in\widehat{H}_{\text{ad}}} \mathcal{M}\{\boldsymbol{\chi}\}
\end{align*}
is a generator of $\text{QCoh}(\mathcal{C})$ with respect to $p$.
\end{lem}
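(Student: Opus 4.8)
The plan is to verify the defining surjectivity of a generator with respect to $p$ after restricting to suitable local charts on the coarse moduli space $C$, reducing it to the statement that the line-bundle summands of $\mathcal E:=\bigoplus_{\boldsymbol{\chi}\in\widehat{H}_{\mathrm{ad}}}\mathcal M\{\boldsymbol{\chi}\}$ realise every character of the stabiliser group at every point of $\mathcal C$. Writing $\mathcal E=\bigoplus_\beta L_\beta$ with the $L_\beta$ line bundles, and using that $p_*$ preserves products while $p^*$ is right exact, one sees that for any quasicoherent $\mathcal G$ the image of
\begin{align*}
p^*\big(p_*\mathcal{H}om_{\OO_{\mathcal C}}(\mathcal E,\mathcal G)\big)\otimes\mathcal E\longrightarrow\mathcal G
\end{align*}
equals $\sum_\beta\operatorname{im}\!\big(p^*p_*(L_\beta^\vee\otimes\mathcal G)\otimes L_\beta\to\mathcal G\big)$, which involves the $L_\beta$ only one at a time. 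This expression commutes with restriction to Zariski opens of $C$, and surjectivity of a map of quasicoherent sheaves on the noetherian scheme $C$ can be checked on completions of local rings, so I would cover $C$ by charts of three kinds: a generic point of a component $\mathcal C_i$, over which $\mathcal C$ is of the form $\A^1\times B\mu_{d_i}$; a non-nodal orbifold point $q_{i,-}$ or $q_{i,+}$ (which occurs only for chains), over which $\mathcal C$ is of the form $[\A^1/H_{i,\pm}]$ with $H_{i,\pm}$ as in \eqref{SES1} and \eqref{SES2}; and a node $q_i$, over which (after completion) $\mathcal C$ is $[\Spec\C[[x,y]]/(xy)\,/\,H_i]$ with the action of Theorem \ref{WrappedTheorem}.

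The local input is then uniform. On a chart $[\Spec R/G]$ with $G$ finite abelian and coarse map $p\colon[\Spec R/G]\to\Spec R^G$, writing $\OO\langle\chi\rangle$ for the free rank-one equivariant module with generator in weight $\chi$, the eigenspace decomposition $M=\bigoplus_{\chi\in\widehat G}M_\chi$ of a $G$-equivariant $R$-module gives $p_*\mathcal{H}om(\OO\langle\chi\rangle,M)=M_\chi$ as an $R^G$-module, so $\operatorname{im}\!\big(p^*p_*\mathcal{H}om(\OO\langle\chi\rangle,M)\otimes\OO\langle\chi\rangle\to M\big)=R\cdot M_\chi$, and summing over $\widehat G$ recovers all of $M$. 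On such charts (which are local, or complete local) the Picard group is $\widehat G$, so each summand $L_\beta$ is an $\OO\langle\chi\rangle$ determined by its fibre weight; hence $\mathcal E$ is a generator with respect to $p$ as soon as its summands realise every character of $G$ on each chart. I would check this in the three cases. At a generic point, $\mathcal N_i$ restricts to a generator of $\widehat{\mu_{d_i}}$ (by the root-stack description of the gerbe, $\mathcal N_i$ restricts to the standard character), so the twists $\mathcal N_i^{\otimes 0},\dots,\mathcal N_i^{\otimes d_i-1}$ appearing among the $L_\beta$ exhaust $\widehat{\mu_{d_i}}$. At $q_{i,\pm}$, \eqref{SheafWeight} is precisely the statement that the sheaves $\text{pr}_1^*\OO_{\mathcal U_\pm}(jq_{i,\pm})\otimes\mathcal N_i^{\otimes k}$ run bijectively over $\widehat{H}_{i,\pm}$, and all of these occur among the $L_\beta$. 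At a node $q_i$, the fibre weight of $\mathcal M\{\boldsymbol{\chi}\}$ at $q_i$ is $\chi_i$, and by the property recorded just before the lemma every $\chi\in\widehat H_i$ equals $\chi_i$ for some admissible $\boldsymbol{\chi}$, so all of $\widehat{H}_i$ is realised there too.

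The step I expect to need the most care is the bookkeeping underlying the node case: identifying the restriction of $\mathcal M\{\boldsymbol{\chi}\}$ to a completed neighbourhood of $q_i$ with the twisted line bundle $\OO\langle\chi_i\rangle$ on $[\Spec\C[[x,y]]/(xy)/H_i]$, which rests on the local description of the chosen compatible gerbe structures and the surjections $\psi_{i,+},\psi_{i+1,-}$ from the construction following \eqref{projectingMV}, together with the combinatorics of admissible tuples --- that the restrictions $\mathcal M\{\boldsymbol{\chi}\}|_{\widetilde{\mathcal C}_i}$ really do sweep out all line bundles $\OO_{\widetilde{\mathcal C}_i}(jq_{i,-}+mq_{i,+})\otimes\mathcal N_i^{\otimes k}$, and that prescribed node-weights can be propagated consistently around the cycle (or along the chain). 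This is exactly where the argument of \cite{LekiliPolishchukAuslander} requires the ``minor alterations'' needed to accommodate non-trivial generic stabilisers; granting these compatibilities, assembling the three cases via the local computation above yields surjectivity everywhere on $C$, hence the lemma.
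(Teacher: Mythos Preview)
Your proposal is correct and arrives at the same verification as the paper: both arguments reduce to checking that the fibre of $\mathcal{E}$ at every geometric point contains every irreducible representation of the isotropy group, and both check this at generic points (via $\mathcal{N}_i$), at the endpoints $q_{1,-},q_{n,+}$ in the chain case, and at the nodes (via the admissibility property that every $\chi\in\widehat{H}_i$ arises as some $\chi_i$).

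The only genuine difference is in packaging. The paper invokes \cite[Theorem 5.2]{OlssonStarr} as a black box --- a locally free sheaf whose fibres contain every irreducible representation of every stabiliser is a generator with respect to $p$ --- and then checks the hypothesis. You instead reprove this criterion inline: your local computation that $\sum_{\chi\in\widehat G} R\cdot M_\chi = M$ on a chart $[\Spec R/G]$ is precisely the content of the Olsson--Starr theorem in this abelian, one-point-stabiliser setting. Your route is more self-contained but longer; the paper's is shorter but relies on an external reference. Neither gains or loses anything mathematically, since the fibre-representation criterion is exactly what both end up verifying.
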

\begin{proof}
Let $x$ be a point of $\mathcal{C}$, considered as a map $x:\Spec \C\rightarrow \mathcal{C}$. Let $G_x$ be its isotropy group, and denote by $\tilde{x}:BG_x\rightarrow \mathcal{C}$ the corresponding natural map. Then, \cite[Theorem 5.2]{OlssonStarr} stipulates that if $\mathcal{E}$ is a locally free sheaf such that $\tilde{x}^*\mathcal{E}$ contains every irreducible representation of $G_x$ for every geometric point $x$, then $\mathcal{E}$ is a generator of $\text{QCoh}(\mathcal{C})$ with respect to $p$. \\

From the fact that for each $\chi\in\widehat{H}_i$ there is a $\boldsymbol{\chi}\in\widehat{H}_{\text{ad}}$ such that $\chi_i=\chi$, it is clear that the fibre of $\bigoplus_{\boldsymbol{\chi}}\mathcal{M}\{\boldsymbol{\chi}\}$ at any nodal point (as well as at $q_{1,-}$ and $q_{n,+}$ in the chain case) contains every irreducible representation of $H_i$. Since $\chi_{d_i}$ pushes down to a generator of $\Z/d_{i}$, the fibre of  $\bigoplus_{\boldsymbol{\chi}}\mathcal{M}\{\boldsymbol{\chi}\}$ at a point whose isotropy group is $\mu_{d_i}$ contains every irreducible representation of $\mu_{d_i}$, and this establishes the claim. 
\end{proof}

To calculate the morphisms between the modules $\mathcal{S}_{q_i}$, and their twists $\mathcal{S}_{q_i}\{\chi\}$ for $\chi\in\widehat{H}_i$, with the $\mathcal{P}_i(j,m,k)$, we can work locally in the patch $U=\Spec S$, as above, and consider $H_i$ equivariant $\mathcal{A}_U$-modules. We begin by observing that, as in the non-stacky and orbifold cases, the only relevant Ext-class is given by the short exact sequence of $\mathcal{A}_{U}$-modules 
\begin{align}\label{ExtClassSES}
0\rightarrow\begin{pmatrix}
I\\
I
\end{pmatrix}\rightarrow \begin{pmatrix}
I\\
\mathcal{O}_{U}
\end{pmatrix}\rightarrow \mathcal{S}_{q_i}\rightarrow 0,
\end{align}
and that this class is $H_{i}$-equivariant. Therefore, we have morphisms
\begin{align*}
\Ext^1(S_{q_i},\mathcal{P}_i(j,m,0))=a_i(m,0)\\
\Ext^1(S_{q_i},\mathcal{P}_{i+1}(j,m,0))=b_i(j,0)
\end{align*}
for any $m\equiv -1\bmod r_{i,+}$, and $j\equiv -1\bmod r_{i+1,-}$, respectively. Consider $\mathcal{M}\{\boldsymbol{\chi}\}$ such that the character at $q_i$ is $\chi_i$. It is clear that we have
\begin{align*}
\mathcal{S}_{q_i}\otimes\mathcal{M}\{\boldsymbol{\chi}\}\simeq \mathcal{S}_{q_i}\{\chi_i\}.
\end{align*}
In particular, as in \eqref{SheafWeight}, we have that for each $\chi\in\widehat{H}_i$, and any $m_i,\ j_i,\ m_{i+1},\ j_{i+1}\in \Z$, there exists $m\in\{m_i,\dots,m_i+r_{i,+}-1\},\ k_+\in\{0,\dots d_i-1\}$ and $j\in\{j_{i+1},\dots, j_{i+1}+r_{i+1,-}-1\},\ k_-\in\{0,\dots,d_{i+1}-1\}$ such that $H_i$ acts on the fibres of the sheaves
\begin{align*}
\OO_{\widetilde{\mathcal{C}}_i}(mq_{i,+})\otimes \mathcal{N}_i^{\otimes k_+},\\
\OO_{\widetilde{\mathcal{C}}_{i+1}}(jq_{i+1,-})\otimes\mathcal{N}_{i+1}^{\otimes k_-}
\end{align*}
at $q_{i,+}$ and $q_{i+1,-}$, respectively, with character $\chi$. \\

By twisting the sequence \eqref{ExtClassSES} by $\mathcal{M}\{\boldsymbol{\chi}\}$, we obtain morphisms
\begin{align}\label{morphismsInEnd}
\begin{split}
&\Ext^1(S_{q_i}\{\chi\},\mathcal{P}_i(j_i,m_0+m,k_{+}))=\C\cdot a_i(m,k_{+}),\quad \text{and}\\
&\Ext^1(S_{q_i}\{\chi\},\mathcal{P}_{i+1}(j_0+j,m_{i+1},k_{-}))=\C\cdot b_i(j,k_{-}),
\end{split}
\end{align}
for each $\chi\in \widehat{H}_i$, where $m_0\in \{m_i,\dots,m_i+r_{i,+}-1\}$ is a distinguished element such that \\$m_0\equiv -1\bmod r_{i,+}$, and $(m,k_{+})$ as above solves
\begin{align}\label{WeightMatching+}
-m\chi_{r_{i,+}}+k_{+}\chi_{d_i,+}=\chi,
\end{align} 
 $j_0\in \{j_{i+1},\dots,j_{i+1}+r_{i+1,-}-1\}$ is a distinguished element such that $j_0\equiv -1\bmod r_{i+1,-}$, and $(j,k_{-})$ as above solves
\begin{align}\label{WeightMatching-}
-j\chi_{r_{i+1-}}+k_{-}\chi_{d_{i+1},-}=\chi.
\end{align} 
Now, we have constructed a full, strong exceptional collection consisting of the objects:
\begin{itemize}
	\item For any fixed $j_i,\ m_i\in \Z$, and each irreducible component, being a $\mu_{d_i}$-gerbe over $\P_{i}$, the collections 
	\begin{align*}
	\bigoplus_{k=0}^{d_i-1}\Exc_i(j_i,m_i,k),
	\end{align*} 
	\item For each node $q_i=|\mathcal{C}_{i}|\cap|\mathcal{C}_{i+1}|$, the objects
	\begin{align*}
	\mathcal{S}_{q_i}\{\chi_k\}\quad\text{for each }\chi_k\in \widehat{H}_i.
	\end{align*}
\end{itemize}
The endomorphism algebra of this collection is generated by the morphisms $x_i$, $y_i$ in \eqref{Auslander exceptional collection}, as well as the morphisms given by \eqref{morphismsInEnd}. The relations are $ya=0$ and $xb=0$ whenever the composition is possible. The proof of this, as well as the claim that the collection is full and strong, can be seen from following through the proof of \cite[Theorem 1.2.3]{LekiliPolishchukAuslander} \textit{mutatis mutandis} (cf. \cite[Theorem 9]{Burban2009TiltingON}). Of course, the resulting category $D^b(\mathcal{A}_\mathcal{C}-\mathrm{mod})$ only depends on the parameters stated in Theorem \ref{WrappedTheorem}, ultimately for the same reason as $D^b\Coh(\mathcal{C})$ does. 

\section{The partially wrapped Fukaya category}\label{Pwrapped Section}
Partially wrapped Fukaya categories were first defined by Auroux in \cite{Auroux_fukayacategories} and further developed by Sylvan in \cite{SylvanPwrapped}. In this section, we briefly recount the construction of the surfaces under consideration (\cite[Section 3.2]{HabermannHMS}) as well as the strategy of \cite{HKK} for computing the partially wrapped Fukaya category of a graded symplectic surface.
\subsection{Gluing annuli}\label{GluingAnnuli}
Let $A(\ell,r;d)$ denote $d$ annuli, each with $r$ ordered marked points, $p_{rk}^+,\dots p_{r(k+1)-1}^+$, on the first boundary component, and $\ell$ ordered marked points, $p_{\ell k}^-,\dots, p_{\ell(k+1)-1}^-$, on the second boundary component, which have been placed in a column. Here $k\in\{0,\dots,d-1\}$  refers to which annulus the marked points are on, where we count top-to-bottom.  We visualise this as $d$ disjoint rectangles which have been placed in a column, each rectangle has top and bottom identified, and with the left boundary components containing the points $p_{\ell k+i}^-$ and the right boundary components containing the points $p_{r k+i}^+$. The reasoning for the labelling is that we would like to keep track of where the marked points are on each individual annulus, as well as where each marked point is on the left (respectively right)  side of the column of annuli with respect to the ordering $p_0^-,\dots, p_{d_i\ell_i-1}^-$ (respectively $p_0^+,\dots, p_{d_ir_i-1}^+$). \\

Given a collection of annuli 
\begin{align*}
A(\ell_1,r_1;d_1),\dots, A(\ell_{n},r_{n};d_{n}),
\end{align*}
such that $r_id_i=\ell_{i+1}d_{i+1}$, and corresponding permutations $\sigma_i\in \mathfrak{S}_{d_ir_i}$, we can glue these annuli together in the following way. For each $j\in\{0,\dots,d_ir_i-1\}$, we glue a small neighbourhood around the stop $p_j^+$ in $A(\ell_i,r_i;d_i)$ to  a small neighbourhood around the stop $p_{\sigma_i(j)}^-$ in $A(\ell_{i+1},r_{i+1};d_{i+1})$ by attaching a strip. We call such a gluing \emph{circular} if $A(\ell_{n},r_{n};d_{n})$ is glued back to $A(\ell_1,r_1;d_1)$, and in this case we count $i\bmod n$ -- see Figure \ref{fig:constructionexample} for an example. Otherwise, we call a gluing \emph{linear}, and take $i\in\{1,\dots, n\}$. In the case of linear gluing we refer to the left boundary components of $A(\ell_1,r_1,d_1)$ and the right boundary components of $A(\ell_n,r_n,d_n)$ as the left and right distinguished boundary components, respectively.\\

For each $i$, the number of boundary components arising from gluing the $i^{\text{th}}$ and $(i+1)^{\text{st}}$ columns can be computed as follows. Consider the permutations
\begin{align*}
\tau_{r_i}=\big(0,r_i-1, r_i-2,\dots,1\big)\big(r_i,2r_i-1,2r_i-2,\dots, r_i+1\big)\dots\big((d_i-1)r_i,m_ir_i-1,\dots, (d_i-1)r_i+1\big)
\end{align*}
and 
\begin{align*}
\tau_{\ell_i}=\big(0,1,\dots,\ell_{i+1}-1\big)\big(\ell_{i+1},\dots, 2\ell_{i+1}-1\big)\dots\big((d_{i+1}-1)\ell_{i+1},\dots, d_{i+1}\ell_{i+1}-1\big).
\end{align*}
The number of boundary components between the $i^{\text{th}}$ and $(i+1)^{\text{st}}$ columns will then be given by the number of cycles in the decomposition of $\sigma_i^{-1}\tau_{\ell_{i+1}}\sigma_i\tau_{r_i}\in\mathfrak{S}_{m_ir_i}$. Note that if $d_i=d_{i+1}$, then we have $\tau_{r_i}=\tau_{\ell_{i+1}}^{-1}$, and we simply get the commutator. When there is no risk of confusion we will simply refer to the surface which has been constructed as $\Sigma$. \\

\begin{figure}[h]
	\includegraphics[width=0.8\linewidth]{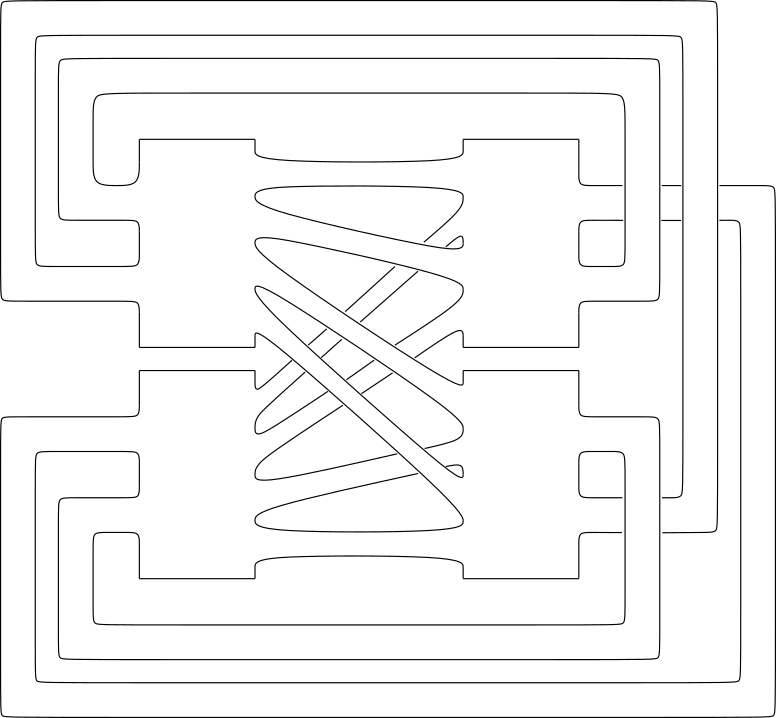}
	\caption{A genus 5 surface with 4 boundary components constructed by gluing $A(2,4; 2)$ to $A(4,2;2)$ via the permutations $\sigma_1=\protect\begin{psmallmatrix}
		0&1&2&3&4&5&6&7\\
		0&2&4&6&1&3&5&7
		\protect\end{psmallmatrix}$ and $\sigma_2=\protect\begin{psmallmatrix}
		0&1&2&3\\
		2&0&3&1
		\protect\end{psmallmatrix}$.} 
	\label{fig:constructionexample}
\end{figure}

To compute the homology groups of $\Sigma$, one can construct a ribbon graph
\begin{align}\label{graph}
\Gamma(\ell_1,\dots,\ell_n;r_1,\dots,r_n;m_1,\dots,m_n;\sigma_1,\dots,\sigma_n)\subseteq \Sigma,
\end{align}
on to which the surface deformation retracts. To do this, let there be a topological disc $\mathbb{D}^2$ for each of the annuli. For each disc, attach a strip which has one end on the top, and the other end on the bottom. Then, attach a strip which connects two discs if there is a strip which connects the corresponding annuli. These strips must be attached in such a way as to respect the cyclic ordering given by the gluing permutation. One can then deformation retract this onto a ribbon graph, whose cyclic ordering at the nodes is induced from the ordering of the strips on each annulus. If there is no ambiguity, we will refer to this graph as $\Gamma(\Sigma)$. \\

Since the embedding of $\Gamma(\Sigma)$ in to $\Sigma$ induces an isomorphism on homology, the homology groups of $\Sigma$ can be easily computed. Namely, since the graph is connected, we have $H_0(\Sigma)=\Z$. For circular gluing we have $\chi(\Sigma)=V-E=\text{rk}H_0(\Sigma)-\text{rk}H_1(\Sigma)=-\sum_{i=1}^nr_id_i$, and for linear gluing we have $\chi(\Sigma)=-\sum_{i=1}^{n-1}r_id_i$, yielding $H_1(\Sigma)=\Z^{(1-\chi)}$ in both cases. A basis for the first homology of the graph is given by an integral cycle basis, and so the basis of the first homology for $\Sigma$ is given by loops which retract onto these cycles. \\

A $\Z$-grading of the surface is given by a homotopy class of (unorientated) line field, as explained in \cite[Section 13(c)]{SeidelBook}, and a Lagrangian is gradable with respect to this line field if and only if its winding number is zero. Note that in the case where the winding number with respect to a given line field vanishes on each embedded Lagrangian in a basis of the first homology of $\Sigma$, the homotopy class of the line field is unique. In the remainder of this paper, we will only consider the case where the line field used to grade the surface is given by the horizontal line field on each annulus, and is parallel to the boundary components on each attaching strip. With this, we see that the line field comes from the projectivisation of a vector field by the same proof as in \cite[Lemma 4.1.1]{LPGentle}. With such a description of a surface, it is possible to determine when two surfaces are graded symplectomorphic (\cite[Corollary 1.2.6]{LPGentle}); however, in order to do this one must (in many cases) compute the Arf invariant. Whilst this is theoretically simple, it is computationally intractable to do in generality without imposing restrictions on the form of the gluing permutations being considered, as in \cite[Section 4.3]{LPGentle}, or \cite[Section 4.3]{HabermannHMS}. 
\subsection{Computation of the partially wrapped Fukaya category}\label{ComputationPwrapped}
Once we have constructed the surfaces in question, our approach to mirror symmetry involves computing the partially wrapped Fukaya category via the method given in \cite{HKK}.\\

Given a surface with non-empty boundary, $\Sigma$, and a collection of stops on its boundary $\Lambda$, \cite[Section 3]{HKK} shows that if $\{L_i\}$ is a collection of pairwise disjoint and non-isotopic Lagrangians such that $\Sigma\setminus\big(\sqcup_{i}L_i\big)$ is topologically a union of discs, each of which with exactly one marked point on its boundary, then the $L_i$ generate $\mathcal{W}(\Sigma;\Lambda)$. Moreover, it is also shown that the total endomorphism algebra of the generators is formal, and can be described as the algebra of a quiver with monomial relations. A connection to the representation theory of finite dimensional algebras is given by the observation that the endomorphism algebra of such a generating collection of objects is \emph{gentle}, a class of finite dimensional algebras first introduced in \cite{AssemSkowronski}. These continue to be of interest to representation theorists, particularly for their relationship with Fukaya categories of surfaces -- see, for example, \cite{Opper2018AGM}, \cite{Amiot2019ACD}.\\

To construct the partially wrapped Fukaya category, it was shown that there exists a ribbon graph dual to the collection of Lagrangians. This graph has an $n-$valent vertex at the centre of each $2n$-gon cut out by the Lagrangians, and the half edges connect two vertices if that edge is dual to a Lagrangian on the boundary of both of the corresponding discs. The cyclic ordering is induced from the orientation of the surface. From this, it was shown in \cite[Theorem 3.1]{HKK} that the partially wrapped Fukaya category is given as the global sections of a constructible cosheaf of $A_\infty$-categories on the ribbon graph. In particular, for each $n$-valent vertex at the centre of a $2n$-gon, there is a fully faithful inclusion functor
\begin{align}\label{CosheafInclusion}
\mathcal{W}(\mathbb{D}^2;n+1)\rightarrow \mathcal{W}(\Sigma;\Lambda),
\end{align} 
where $\mathcal{W}(\mathbb{D}^2;n+1)$ is the partially wrapped Fukaya category of the disc with $n+1$ stops on its boundary.\\

The two prototypical examples from which our strategy is built are the disc with $m$ points on its boundary, as well as the cylinder with $a$ stops on one boundary, and $b$ stops on the other. Consider the disc with $m$ stops on its boundary, and $m-1$ Lagrangians, $L_1,\dots, L_{m-1}$ supported near these stops, as in Figure \ref{DiscGeneratingLags}. The morphisms between Lagrangians is given by the Reeb flow along the boundary of the disc in the anticlockwise direction. Let $a_i:L_i\rightarrow L_{i+1}$ be such a morphism, and observe that $a_{i+1}a_i=0$ for $i=1,\dots, m-2$. It is clear that the endomorphism algebra of the direct sum $\bigoplus_{i=1}^{m-1}L_i$ is the $A_{m-1}$ quiver with relations given by disallowing any composition. 

\begin{figure}[h!]
\begin{tikzpicture}[scale=.99]
\draw (0,0) circle (2cm);
\fill[black] (-30:2cm) circle[radius=2pt];
\draw (-35: 2.5) node["$m$"]{};
\fill[black] (30:2cm) circle[radius=2pt];
\fill[black] (90:2cm) circle[radius=2pt];
\fill[black] (210:2cm) circle[radius=2pt];
\fill[black] (270:2cm) circle[radius=2pt];
\draw[blue, thick]  (50:2cm) .. controls (40:1.75cm) and (20:1.75cm) .. (10:2cm);
\draw (20: 1.25) node["$L_{1}$"]{};
\draw (20: 2.2) node["$1$"]{};
\draw[blue, thick]  (110:2cm) .. controls (100:1.75cm) and (80:1.75cm) .. (70:2cm);
\draw (90: 1.1) node["$L_{2}$"]{};
\draw (90: 2) node["$2$"]{};
\draw[blue, thick]  (230:2cm) .. controls (220:1.75cm) and (200:1.75cm) .. (190:2cm);
\draw (230: 1.5) node["$L_{m-2}$"]{};
\draw (215: 2.9) node["$m-2$"]{};
\draw[blue, thick]  (290:2cm) .. controls (280:1.75cm) and (260:1.75cm) .. (250:2cm);
\draw (270: 1.85) node["$L_{m-1}$"]{};
\draw (270: 2.7) node["$m-1$"]{};
\fill[black] (135:2.2) circle[radius=1.5pt];
\fill[black] (145:2.2) circle[radius=1.5pt];
\fill[black] (155:2.2) circle[radius=1.5pt];
\draw (155: 1.5) node["$L_{\bullet}$"]{};
\end{tikzpicture}
		\caption{A collection of generating Lagrangians for $\mathbb{D}^2$ with $m$ stops. The Reeb flow is in the counterclockwise direction.\label{DiscGeneratingLags} }
\end{figure}
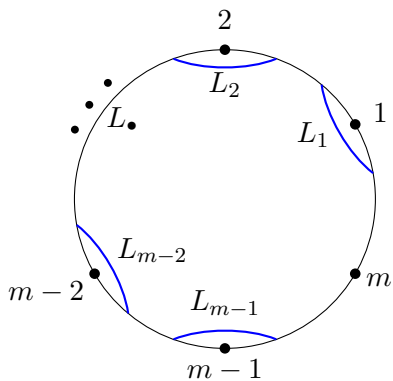
There are two key facts about the collection of Lagrangians $L_1,\dots,L_{m-1}$. The first is that the Lagrangian $L_m$ is quasi-isomorphic to the twisted complex
\begin{equation}\label{LESQuasiIsom}
\begin{tikzcd}
L_1[m-2] \arrow[r] & L_2[m-3]\arrow[r]&\dots\arrow[r] & L_{m-2}[-1] \arrow[r] & L_{m-1}.
\end{tikzcd}
\end{equation}
This is first observed in \cite[Section 3.3]{HKK}, and will be important later in our localisation argument. The second key observation is that the complement $\mathbb{D}^2\setminus\big(\sqcup_{i=1}^{m-1}L_i\big)$ is a collection of topological discs, each with exactly one marked point on the boundary. Therefore, the collection $\{L_1,\dots, L_{m-1}\}$ generates the partially wrapped Fukaya category $\mathcal{W}(\mathbb{D}^2;m)$. \\

The second fundamental example which forms the cornerstone of our strategy is the annulus, $A$, with $a$ stops on one boundary component, and $b$ on the other. A generating collection of Lagrangians on such an annulus is given in Figure \ref{HKKcollection example}, and its corresponding quiver in Figure \ref{PwrapQuiver}. Observe that the quiver algebra of the generators for the single annulus with $a$ stops on one boundary component and $b$ on the other matches precisely the quiver algebra of the exceptional collection of $\P_{a,b}$ given in \eqref{exceptional collection}. This establishes that
\begin{align*}
\mathcal{W}(A;a,b)\simeq D^b\Coh(\P_{a,b}),
\end{align*}
and this observation is at the heart of our strategy. 
\begin{figure}[h!]
	\begin{minipage}{0.5\textwidth}
		\centering
		\begin{tikzpicture}[blob/.style={circle, draw=black, fill=black, inner sep=0, minimum size=\blobsize}, arrow/.style={->, shorten >=6pt, shorten <=6pt}, scale=0.9]
		\def\blobsize{1.2mm}
		\draw (0, 1.5) node[blob]{};
		\draw (0, 4) node[blob]{};
		\draw (0, 3.4) node[blob]{};
		\draw (5, 1) node[blob]{};
		\draw (5, 1.6) node[blob]{};
		\draw (5, 4) node[blob]{};
		\draw (5, 3.4) node[blob]{};
		
		\draw [blue, thick] (0, 0) -- (5, 0);
		\draw [blue, thick] (0, 5) -- (5, 5);
		\draw (0, 0) -- (0, 2);
		\draw[->-=.25] (0,5) to (0,3);
		\draw[dashed] (0,2) -- (0,3);
		\draw[->-=.25] (5, 0) to (5, 2);
		\draw (5,3) -- (5,5);
		\draw[dashed] (5,2) -- (5,3);
		\draw[blue, thick] (0,0.3) -- (5,1.3);
		\draw[blue, thick] (0,0.6) -- (5,2.5);
		\draw[blue, thick] (0,0.9) -- (5,3.7);
		\draw[blue, thick] (0,1.2) -- (5,4.4);		
		\draw[blue, thick] (0,3.7) -- (5,4.8);		
		\draw[blue, thick] (0,2.5) -- (5,4.6);

		\draw (1.5,2) node["$P_{0}^{\pm}$"]{};
		\draw (.75,2.75) node["$P_{\bullet}^{-}$"]{};
		\draw (.75,3.7) node["$P_{a-1}^{-}$"]{};	
		\draw (2.5,4.2) node["$P_{a}^{-}$"]{};	
		\draw (4.5,2.5) node["$P_{1}^{+}$"]{};	
		\draw (4.5,1.4) node["$P_{\bullet}^{+}$"]{};	
		\draw (4.5,.2) node["$P_{b-1}^{+}$"]{};	
		\draw (2.5,-.25) node["$P_{b}^{+}$"]{};	
		\end{tikzpicture} 
		\caption{A collection of generating Lagrangians for $A(a,b;1)$. Top and bottom identified. \label{HKKcollection example}}
	\end{minipage}
	\begin{minipage}{0.475\textwidth}
		\centering
		\begin{tikzcd}[scale=0.4]
			P_{0}^- \arrow[r,"x_1"] \arrow[d,equal] & P_1^- \arrow[r,"x_2"] & \dots \arrow[r,"x_{a-1}"]&P_{a-1}^- \arrow[r,"x_a"] &P_a^-\arrow[d,equal]\\
			P_{0}^+ \arrow[r,"y_1"] & P_1^+ \arrow[r,"y_2"] &\dots \arrow[r,"y_{b-1}"]&P_{b-1}^+\arrow[r,"y_{b}"] & P_{b}^{+}
		\end{tikzcd}
		\caption{Quiver for $A(a,b;1)$. \label{PwrapQuiver}}
	\end{minipage}
\end{figure}
\vspace{-2.5mm} 
\subsubsection{Circular Gluing} Here we compute the partially wrapped Fukaya category for columns of annuli glued circularly, with notation as in Section \ref{GluingAnnuli}. To begin with, we add two stops on each attaching strip -- one on the top, and one on the bottom. We will refer to this collection as $\Lambda$. On the $k^\text{th}$ annulus in the $i^\text{th}$ column we have a collection of Lagrangians $\boldsymbol{P}_{i,k}$ of the same form as in Figure \ref{HKKcollection example}. This collection consists of the objects
\begin{align*}
\{P_{i,0,k}^{+},P_{i,1,k}^+,\dots,P_{i,r_i,k}^+,P_{i,0,k}^-,\dots P_{i,\ell_i,k}^-\}.
\end{align*}
The morphisms within this collection are of the same form as in Figure \ref{PwrapQuiver}. For each attaching strip, we consider a Lagrangian which spans it in such a way that the two stops are in the clockwise direction of its endpoints. We label the Lagrangian which spans the attaching strip beginning at the neighbourhood around the $j^{\text{th}}$ stop between the $i^{\text{th}}$ and $(i+1)^{\text{st}}$ columns by $S_{i,j}$. Here $j\in\{0,\dots, r_{i}m_i-1\}$ and $i\in\Z/n$.\\

As well as the morphisms within each collection $\boldsymbol{P}_{i,k}$, if we write $j=k_+r_i+c_+$ and $\sigma(j)=k_-\ell_{i+1}+c_-$ for $k_+\in\{0,\dots, d_i-1\}$, $c_+\in\{0,\dots,r_{i}-1\}$, $k_-\in\{0,\dots d_{i+1}-1\}$, and $c_-\in\{0,\dots,\ell_{i+1}-1\}$, we also have morphisms
\begin{align*}
a_{i,j}:S_{i,j}&\rightarrow P_{i,c_+,k_+}^+\\
b_{i,j}: S_{i,j}&\rightarrow P_{i+1,\ell_{i+1}-1-c_-,k_-}^-.
\end{align*}

By construction, the complement of this collection of Lagrangians is the disjoint union of hexagons, each with exactly one stop on its boundary. Therefore, we have that the collection of Lagrangians consisting of all of the $\boldsymbol{P}_{i,k}$, as well as the $S_{i,j}$ is a generating collections of Lagrangians for $\mathcal{W}(\Sigma; \Lambda)$.

\subsubsection{Linear Gluing} The case of linear gluing is almost identical to that of circular gluing; however, the first and last columns are now no longer glued to each other. Due to this, we include the stops on the distinguished boundary components in $\Lambda$, although we allow the number of stops on the distinguished boundary components to be empty. In dividing the surface into topological discs for the computation of the partially wrapped Fukaya category, observe that a topological disc with a Lagrangian $S_{i,j}$ on its boundary is a hexagon, as in the circular gluing case, and a quadrilateral otherwise. The generating collection is again given by all of the $\boldsymbol{P}_{i,k}$, as well as the $S_{i,j}$. See Figure \ref{GeneratingCollection} for an example, where its corresponding quiver is given in Figure \ref{GeneratingQuiver}.
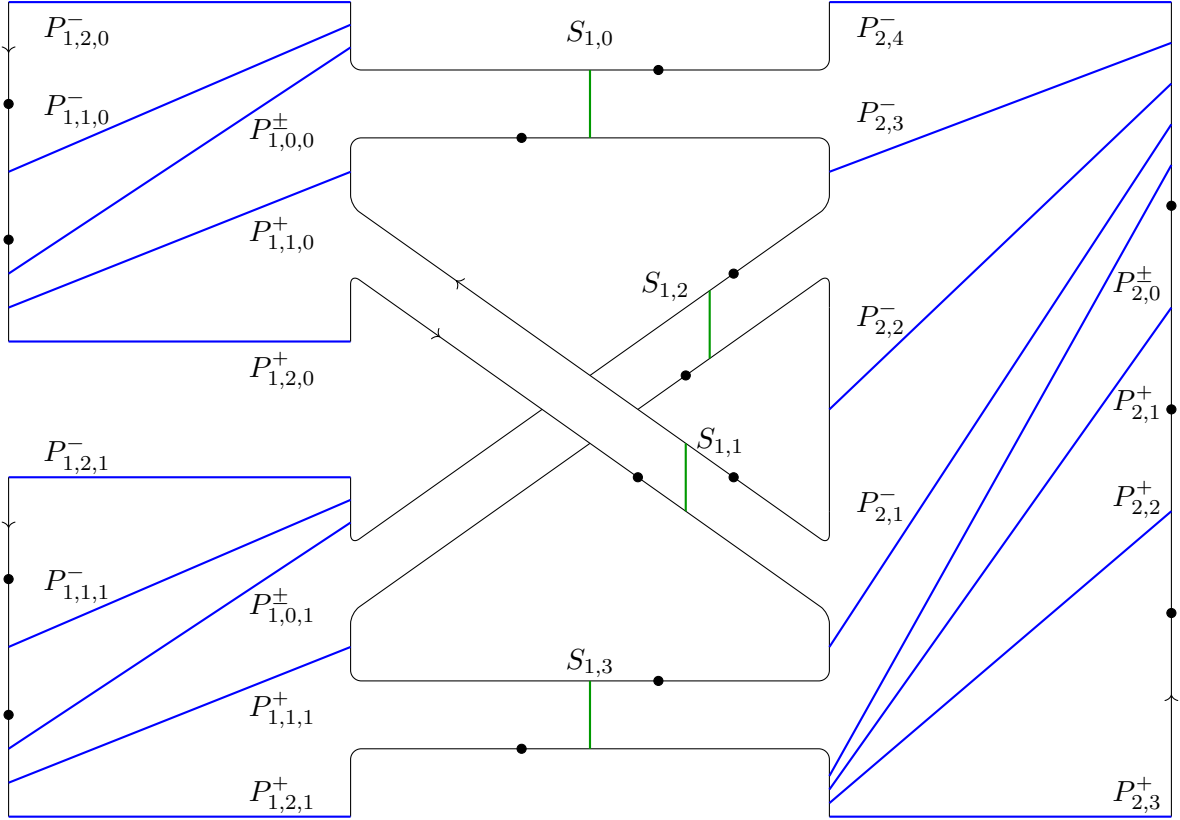
\begin{figure}[h!]
	\begin{tikzpicture}[blob/.style={circle, draw=black, fill=black, inner sep=0, minimum size=\blobsize}, arrow/.style={->, shorten >=6pt, shorten <=6pt}, scale=0.9]
	\def\blobsize{1.2mm}
	\draw[->-=0.15] (5,-12) -- (5,0);
	\draw[blue, thick] (5,0) -- (0,0);
	\draw[blue, thick] (5,-12) -- (0,-12);
	
	\draw[blue, thick] (-7,0) -- (-12,0);
	\draw[->-=0.15] (-12,0) to (-12,-5);
	\draw[blue, thick] (-12,-5) -- (-7,-5);
	
	\draw [rounded corners] (-7,0) -- (-7,-1) -- (0,-1) -- (0,0);
	\draw [rounded corners] (-7,-2.5) -- (-7,-2) -- (0,-2) -- (0,-2.5);
	\draw[->-=0.75][rounded corners] (0,-7.5) --(0,-8) --(-7,-3) -- (-7,-2.5);

	\draw [rounded corners] (-7,-12) -- (-7,-11) -- (0,-11) -- (0,-12);
	\draw [rounded corners] (-7,-9.5) -- (-7,-10) -- (0,-10) -- (0,-9.5);
	\draw[->-=0.25][rounded corners] (-7,-5) to (-7,-4) to (0,-9) to (0,-9.5);
	\draw [rounded corners] (-7,-7) -- (-7,-8) -- (-4.2,-6);
	\draw [rounded corners] (-7,-9.5) -- (-7,-9) -- (-3.5,-6.5);
	\draw [rounded corners] (-3.5,-5.5) -- (0,-3) -- (0,-2.5);
	\draw [rounded corners] (-2.8,-6) -- (0,-4) -- (0,-4.5);
	\draw (0,-4.5) -- (0,-7.5);

	\draw[blue, thick] (-7,-7) -- (-12,-7);
	\draw[->-=0.15] (-12,-7) to (-12,-12);
	\draw[blue, thick] (-12,-12) -- (-7,-12);

	\draw (5, -3) node[blob]{};
	\draw (5, -6) node[blob]{};
	\draw (5, -9) node[blob]{};
	
	\draw (-12, -1.5) node[blob]{};
	\draw (-12, -3.5) node[blob]{};
	
	\draw (-12, -8.5) node[blob]{};
	\draw (-12, -10.5) node[blob]{};
	
	\draw (-2.5, -1) node[blob]{};
	\draw (-4.5, -2) node[blob]{};
	
	\draw (-2.5, -10) node[blob]{};
	\draw (-4.5, -11) node[blob]{};
	
	\draw (-1.4, -4) node[blob]{};
	\draw (-2.1, -5.5) node[blob]{};
	
	\draw (-1.4, -7) node[blob]{};
	\draw (-2.8, -7) node[blob]{};

	\draw[black!40!green, thick]  (-3.5, -1) -- (-3.5,-2); 
	\draw[black!40!green, thick]  (-3.5, -10) -- (-3.5,-11); 
	\draw[black!40!green, thick]  (-1.75, -4.25) -- (-1.75,-5.25); 	
	\draw[black!40!green, thick]  (-2.1, -6.5) -- (-2.1,-7.5);
	
	\draw[blue, thick] (-12,-4.5) -- (-7, -2.5);
	\draw[blue, thick] (-12,-4) -- (-7, -0.667);
	\draw[blue, thick] (-12,-2.5) -- (-7, -.333);
	
	\draw[blue, thick] (-12,-11.5) -- (-7, -9.5);
	\draw[blue, thick] (-12,-11) -- (-7, -7.667);
	\draw[blue, thick] (-12,-9.5) -- (-7, -7.333);
	
	\draw[blue, thick] (0,-11.8) -- (5, -7.5);
	\draw[blue, thick] (0,-11.6) -- (5, -4.5);
	\draw[blue, thick] (0,-11.4) -- (5, -2.4);
	\draw[blue, thick] (0,-9.5) -- (5, -1.8);
	\draw[blue, thick] (0,-6) -- (5, -1.2);
	\draw[blue, thick] (0,-2.5) -- (5, -0.6);
	
	\draw (-8,-2.5) node["$P_{1,0,0}^{\pm}$"]{};
	\draw (-8,-4) node["$P_{1,1,0}^{+}$"]{};
	\draw (-8,-6) node["$P_{1,2,0}^{+}$"]{};
	\draw (-11,-2.15) node["$P_{1,1,0}^{-}$"]{};
	\draw (-11,-1) node["$P_{1,2,0}^{-}$"]{};
	
	\draw (-8,-9.5) node["$P_{1,0,1}^{\pm}$"]{};
	\draw (-8,-11) node["$P_{1,1,1}^{+}$"]{};
	\draw (-8,-12.25) node["$P_{1,2,1}^{+}$"]{};
	\draw (-11,-9.15) node["$P_{1,1,1}^{-}$"]{};
	\draw (-11,-7.25) node["$P_{1,2,1}^{-}$"]{};
	
	\draw (-3.5,-1) node["$S_{1,0}$"]{};
	\draw (-2.4,-4.7) node["$S_{1,2}$"]{};
	\draw (-1.6,-7) node["$S_{1,1}$"]{};
	\draw (-3.5,-10.25) node["$S_{1,3}$"]{};
	
	\draw (0.75,-8) node["$P_{2,1}^{-}$"]{};
	\draw (0.75,-5.25) node["$P_{2,2}^{-}$"]{};
	\draw (0.75,-2.25) node["$P_{2,3}^{-}$"]{};
	\draw (0.75,-1) node["$P_{2,4}^{-}$"]{};
	\draw (4.5,-4.75) node["$P_{2,0}^{\pm}$"]{};
	\draw (4.5,-6.5) node["$P_{2,1}^{+}$"]{};
	\draw (4.5,-7.9) node["$P_{2,2}^{+}$"]{};
	\draw (4.5,-12.25) node["$P_{2,3}^{+}$"]{};
	\end{tikzpicture}
	\caption{Generating collections of Lagrangians for linear gluing of $A(2,2;2)$ to $A(4,3;1)$ via $\sigma_1:(0,1,2,3)\mapsto (0,2,1,3)$. Top and bottom of each annulus is identified. }
	\label{GeneratingCollection}
\end{figure}
\begin{figure}[h!]
	\begin{tikzcd}
		& P_{1,0,0}^- \arrow[r,"x_{1,1,0}"] \ar[equal]{d}& P_{1,1,0}^- \arrow[r,"x_{1,2,0}"] &P_{1,2,0}^-\ar[equal]{d}& P_{1,0,1}^- \arrow[r,"x_{1,1,1}"] \ar[equal]{d}& P_{1,1,1}^- \arrow[r,"x_{1,2,1}"] &P_{1,2,1}^-\ar[equal]{d}\\
		& P_{1,0,0}^+ \arrow[r,"y_{1,1,0}"]& P_{1,1,0}^+ \arrow[r,"y_{1,2,0}"] &P_{1,2,0}^+& P_{1,0,1}^+ \arrow[r,"y_{1,1,1}"]& P_{1,1,1}^+ \arrow[r,"y_{1,2,1}"] &P_{1,2,1}^+\\
		& S_{1,0} \arrow[u,"a_{1,0}"] \arrow[d,"b_{1,0}"]& S_{1,1} \arrow[u,"a_{1,1}"] \arrow{drr}[anchor=center,yshift=2.5ex, xshift=-4ex]{b_{1,1}}& & S_{1,2} \arrow[u,"a_{1,2}"] \arrow{dll}[anchor=center,yshift=2.5ex, xshift=4ex]{b_{1,2}}& S_{1,3} \arrow[u,"a_{1,3}"] \arrow [d,"b_{1,3}"]\\
		P_{2,4}^-\ar[equal]{d}& P_{2,3}^- \arrow[l,"x_{2,4}"]& P_{2,2}^-\arrow[l,"x_{2,3}"]& & P_{2,1}^- \arrow[ll,"x_{2,2}"] & P_{2,0}^- \arrow[l,"x_{2,1}"] \ar[equal]{d}\\
		P_{2,3}^+ & P_{2,2}^+ \arrow[l,"y_{2,3}"] & & P_{2,1}^+ \arrow[ll,"y_{2,2}"] &&  P_{2,0}^+ \arrow[ll,"y_{2,1}"]
	\end{tikzcd}
	\caption{Quiver describing the endomorphism algebra of the generating collection of Figure \ref{GeneratingCollection}. Relations given by $xb=0$ and $ya=0$.}
		\label{GeneratingQuiver}
\end{figure}
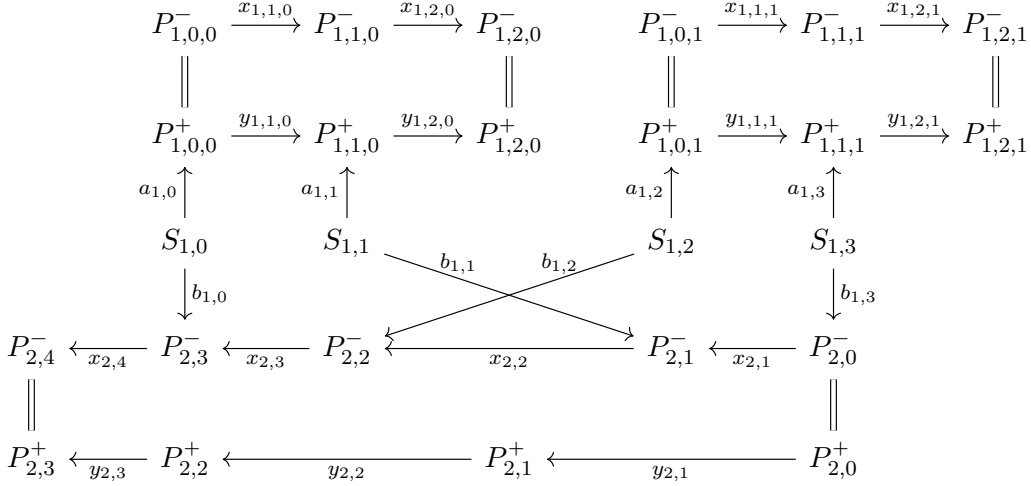
\section{Localisation}\label{LocalisationSection}
As mentioned in the introduction, there are natural localisation functors on the A-- and B--sides. The strategy to establishing Theorem \ref{HMSTheorem} is to show that the quasi-equivalence in Theorem \ref{WrappedTheorem} intertwines localisation on both sides. In this section, we describe the localisation functors on the A-- and B--sides before establishing Theorem \ref{WrappedTheorem}. As in the construction of Auslander orders, this is only a mild generalisation of the orbifold case studied in \cite{LekiliPolishchukAuslander}, and we include only the relevant alterations in the argument.
\subsection{Localisation on the B--side}\label{BSideLocalisation}
As in the non-stacky case, we consider the functor
\begin{align}\label{LocFunctor}
\mathcal{H}om_\mathcal{A}(\mathcal{F},-):\mathcal{A}_\mathcal{C}-\text{mod}\rightarrow \Coh\mathcal{C},
\end{align}
and construct a subcategory $\mathcal{T}$ on which this functor vanishes. We again work locally, and so the analysis follows from the non-stacky case by working equivariantly, as is demonstrated in the orbifold case \cite[Section 3.2]{LekiliPolishchukAuslander}. Note that this functor is exact since $\mathcal{F}$ is a summand of $\mathcal{A}_{\mathcal{C}}$, so is locally projective.\\

In order to construct $\mathcal{T}$, we define the modules
\begin{align*}
\widetilde{\mathcal{S}}_i(j,k)^{\pm}=\begin{pmatrix}
\pi_{i*}(\OO_{\widetilde{\mathcal{C}}_i}(jq_{i,\pm})\otimes \mathcal{N}_i^{\otimes k})|_{q}\\
\pi_{i*}(\OO_{\widetilde{\mathcal{C}}_i}(jq_{i,\pm})\otimes \mathcal{N}_i^{\otimes k})|_{q}
\end{pmatrix},
\end{align*}
where $q=\pi_i(q_{i,\pm})$. These modules fit into the short exact sequences
\begin{align}\label{ExcObjectLocProjRes}
\begin{split}
0\longrightarrow\mathcal{P}_i(j-1,m,k)\longrightarrow\mathcal{P}_i(j,m,k)\longrightarrow \widetilde{\mathcal{S}}_i(j,k)^-\longrightarrow 0\\
0\longrightarrow\mathcal{P}_i(j,m-1,k)\longrightarrow\mathcal{P}_i(j,m,k)\longrightarrow \widetilde{\mathcal{S}}_i(m,k)^+\longrightarrow 0,
\end{split}
\end{align}
and have support at $\pi_i(q_{\pm})$. When the point $q_{i,\pm}$ is not a node (i.e. the $q_{1,-}$ and $q_{n,+}$ in the chain case) we set $\mathcal{E}_i^{\pm}(j,k)=\widetilde{\mathcal{S}}_i(j,k)^{\pm}$. If $q_{i,\pm}$ is a node, then we have natural inclusions
\begin{align*}
\mathcal{S}_{q_{i}}\{\chi_{+}\}\hookrightarrow \widetilde{\mathcal{S}}_i(m,k)^+\\
\mathcal{S}_{q_{i-1}}\{\chi_{-}\}\hookrightarrow \widetilde{\mathcal{S}}_i(j,k)^-,
\end{align*}
where $\chi_{+}$ (resp. $\chi_{-}$) is the character through which $H_i$ (resp. $H_{i-1}$) acts on the fibre of the sheaf $\OO_{{\widetilde{\mathcal{C}}}_i}(mq_{i,+})\otimes\mathcal{N}_i^{\otimes k}$ (resp. $\OO_{{\widetilde{\mathcal{C}}}_i}(jq_{i,-})\otimes\mathcal{N}_i^{\otimes k}$) at $q_{i,+}$ (resp. $q_{i,-}$). We then define $\mathcal{E}_i(j,k)^{\pm}$ to fit into the short exact sequences
\begin{align*}
0\longrightarrow\mathcal{S}_{q_i}\{\chi_{+}\}\longrightarrow \widetilde{\mathcal{S}}_i(m,k)^{+}\longrightarrow \mathcal{E}_i(m,k)^+\longrightarrow 0,\\
0\longrightarrow\mathcal{S}_{q_{i-1}}\{\chi_{-}\}\longrightarrow \widetilde{\mathcal{S}}_i(j,k)^{-}\longrightarrow \mathcal{E}_i(j,k)^-\longrightarrow 0.
\end{align*}
As in the orbifold case, we find that the objects $\mathcal{E}_i(j,k)^{\pm}$ are exceptional unless $\pi_i(q_{i,\pm})$ is a smooth point with no stacky structure. At nodes, this follows from the presentation as a quotient of $xy=0$ by $H_i$. In this presentation, the relevant Ext-groups are the $H_i$-invariant classes of the Ext-groups computed on $xy=0$, and it is shown in \cite[Lemma 3.2.1]{LekiliPolishchukAuslander} that these groups vanish. In the case where the point is a smooth point with non-trivial stacky structure, we find that the objects $\mathcal{E}_i(j,k)^{\pm}$ are exceptional from the locally projective resolution \eqref{ExcObjectLocProjRes}. We define $\mathcal{T}$ to be the subcategory formed by direct sums of all the objects $\mathcal{E}_i(j,k)^{\pm}$ supported at the nodes. \\

With this we have that $\mathcal{T}\subseteq D^b(\mathcal{A}_\mathcal{C}-\mathrm{mod})$ is a Serre subcategory, and identifies
\begin{align*}
\Coh\mathcal{C}&\simeq \mathcal{A}_{\mathcal{C}}-\text{mod}/\mathcal{T}\\
D^b\Coh(\mathcal{C})&\simeq D^b(\mathcal{A}_{\mathcal{C}}-\text{mod})/\langle\mathcal{T}\rangle.
\end{align*}
To see this, note that the derived equivalence follows from the equivalence of abelian categories by \cite{Miyachi}. The equivalence of abelian categories is given for non-stacky curves in \cite[Theorem 4.8]{Burban2009TiltingON}, and the present situation follows from this. As explained in the orbifold case, \cite[Proposition 3.2.3]{LekiliPolishchukAuslander}, one must check that certain adjunctions are equivalences, and this boils down to checking  the statement locally at nodes. One can then use the presentation at a node as the quotient of $xy=0$ by $H_i$, and the argument follows from the non-stacky case.

\subsection{Localisation on the A--side}\label{ALocalistaion}
Part of the utility of the construction in \cite{HKK} is that it not only provides a categorical resolution of the compact Fukaya category of a surface, but also gives an explicit description of a functor
\begin{align*}
\mathcal{W}(\Sigma;\Lambda)\rightarrow \mathcal{W}(\Sigma;\Lambda'),
\end{align*}
where $\Lambda'$ is obtained from $\Lambda$ by removing stops. This map is given by taking the quotient of the partially wrapped Fukaya category by the category generated by Lagrangians which are supported near the stops being removed. In particular, by removing all of the stops in the case of circular gluing, one recovers a map to the wrapped Fukaya category of the surface. In the case of linear gluing, the situation is analogous, however, the stops on the distinguished boundary components are not removed.
It is in this context that the quasi-isomorphism \eqref{LESQuasiIsom} and the functor \eqref{CosheafInclusion} show their utility by giving the Lagrangian supported near a stop to be removed in terms of the generating Lagrangians on a disc. \\

For circular gluing, we will define the object $E_{i,j}^{+}$ (resp. $E_{i+1,j}^-$) to be Lagrangian supported near the stop on the bottom (resp. top) of the attaching strip beginning at the neighbourhood of the $j^{\text{th}}$ stop on a boundary component between the $i^{\text{th}}$ and $(i+1)^{\text{st}}$ columns. By again writing $j=k_+r_i+c_+$ and $\sigma_i(j)=k_-\ell_{i+1}+c_-$ for $k_+\in\{0,\dots, d_i-1\}$, $c_+\in\{0,\dots,r_{i}-1\}$, $k_-\in\{0,\dots d_{i+1}-1\}$, and $c_-\in\{0,\dots,\ell_{i+1}-1\}$, we have by \eqref{LESQuasiIsom} and \eqref{CosheafInclusion} 
\begin{align*}
E_{i,j}^+&\simeq\{S_{i,j}[3]\rightarrow P_{i,c_+,k_+}^+[2]\rightarrow P_{i,c_++1,k_+}^+[1]\}\\
E_{i+1,j}^-&\simeq\{S_{i,j}[3]\rightarrow P_{i+1,\ell_{i+1}-1-c_-,k_-}^-[2]\rightarrow P_{i+1,\ell_{i+1}-c_-,k_-}^-[1]\}.
\end{align*}
In the case of linear gluing we have the same iterated cones in for the hexagonal regions, as well as the cones 
\begin{align*}
E_{1,j}^-&\simeq\{P_{i,j}^-[2]\rightarrow P_{i,j+1}^-[1]\}\\
E_{n,j}^+&\simeq \{P_{i,j}^+[2]\rightarrow P_{i,j+1}^+[1]\}.
\end{align*}

\begin{proof}[Proof of Theorem \ref{WrappedTheorem}]
	In order to prove the statement, it suffices to match the generators of the categories in question. We begin with the case of a cycle of curves, or a chain where both $r_{1,-},r_{n,+}>0$. On the B--side, we fix an exceptional collection such that $j_i=0$ and $m_i=-1$. We again label the characters in $\widehat{H}_i$ such that $\chi_{k_+r_{i,+}+m}$ is the character of $\OO_{\widetilde{{\mathcal{C}}}_i}(mq_{i,+})\otimes \mathcal{N}_i^{\otimes k_+}$. On the A--side we construct the candidate mirror as follows: For each irreducible component $\mathcal{C}_i$ of $\mathcal{C}$, being a $\mu_{d_i}$-gerbe over $\mathbb{P}_{r_{i,-},r_{i,+}}$, we consider a column of annuli $A(r_{i,-},r_{i,+};d_i)$. Let $j, k_-$ solve 
	\begin{align*}
	-j\chi_{r_{i+1,-}}+k_-\chi_{d_{i+1},-}=\chi_{k_+r_{i,+}+m}, 
	\end{align*}
	as in \eqref{WeightMatching-}. We then define the permutation $\sigma_i$ to be given by 
	\begin{align*}
	k_+r_{i,+}+m\mapsto k_-r_{i+1,-}+(-j)\bmod r_{i+1,-}.
	\end{align*}
	Let $\Sigma$ be the surface constructed in this way, and let $\mathcal{W}(\Sigma; \Lambda)$ be its partially wrapped Fukaya category, as described in Section \ref{ComputationPwrapped}. The identification of the generation objects on both sides is given by:
	\begin{align*}
	P_{i,j,k}^-&\longleftrightarrow\mathcal{P}_i(j,-1,k)\\
	P_{i,m,k}^+&\longleftrightarrow\mathcal{P}_i(0,m-1,k)\\
	S_{i,j}&\longleftrightarrow\mathcal{S}_i\{\chi_j\}[-1].
	\end{align*}
	From this, we can see that the endomorphism algebras of the two exceptional collections which generate their respective categories are equivalent, which establishes the claim in this case.\\
	
	To complete the proof in the case of a cycle of curves, where either or both of $r_{1,-},r_{n,+}=0$, we must utilise \cite[Proposition 3.2.2]{LekiliPolishchukAuslander}, which, suitably reworded to our context, states that under the above equivalence, we have a correspondence 
	\begin{align}\label{IdentifyingModules}
	\{\mathcal{A}_\mathcal{C}-\text{modules} \quad \mathcal{E}^-_i(j,k)\}&\longleftrightarrow\{E^-_{i,r_{i,-}k+j}[-1]\}\\
		\{\mathcal{A}_\mathcal{C}-\text{modules} \quad \mathcal{E}^+_i(m,k)\}&\longleftrightarrow\{E^+_{i,r_{i,+}k+m-1}[-1]\}.
	\end{align}
	The proof of the alteration of the statement to our situation follows directly from the proof of the original statement. Namely, we let $\Exc$ be the direct sum of the objects in the exceptional collection in $D^b(\mathcal{A}_{\mathcal{C}}-\mathrm{mod})$ described in Section \ref{OrderSection} and $A$ its endomorphism algebra. One can describe the right $A$-modules of the form $\mathrm{R}\!\Hom(\Exc,-)$ corresponding to the objects $\mathcal{E}^-_i(j,k)$ and $\mathcal{E}^+_i(m,k)$ in the equivalence 
	\begin{align*}
	\mathrm{R}\!\Hom(\Exc,-):D^b(\mathcal{A}_{\mathcal{C}}-\text{mod})\xrightarrow{\sim}D^b(\text{mod}-A),
	\end{align*}
	 and see that they match with the objects in $D^b(\text{mod}-A)$ calculated using the presentations of $E_{i,j}^-$ (resp. $E_{i,j-1}^+$) given in Section \ref{ALocalistaion}.\\ 

	Now, consider the case of $r_{n,+}>r_{1,-}=0$, and define the stack $\overline{\mathcal{C}}$ to be the same curve as $\mathcal{C}$, but where $\mathcal{C}_1=\P_{1,r_{1,+}}$. Namely, we have $\mathcal{C}=\overline{\mathcal{C}}\setminus\{q_{1,-}\}$.
	Since $\mathcal{A}_{\overline{\mathcal{C}}}$ is isomorphic near $q_{1,-}$ to the matrix algebra over $\OO$, we have that the restriction functor
	\begin{align*}
	\mathcal{A}_{\overline{\mathcal{C}}}-\text{mod}\rightarrow \mathcal{A}_{\mathcal{C}}-\text{mod}
	\end{align*}
	identifies $\mathcal{A}_{\mathcal{C}}-\text{mod}$ with the quotient of $\mathcal{A}_{\overline{\mathcal{C}}}-\text{mod}$ by the Serre subcategory generated by $\bigoplus_{k=0}^{d_1-1}\mathcal{E}_1(0,k)^-$ (i.e. $\bigoplus_{k=0}^{d_1-1}\begin{pmatrix}
	\OO_{q_{1,-}}\\
	\OO_{q_{1,-}}
	\end{pmatrix}\otimes\mathcal{N}_1^{\otimes k}$). By the main result of \cite{Miyachi}, this yields a derived equivalence 
	\begin{align*}
	D^b(\mathcal{A}_{\overline{\mathcal{C}}}-\text{mod})/\langle\bigoplus_{k=0}^{d_1-1}\mathcal{E}_1(0,k)^-\rangle\simeq D^b(\mathcal{A}_{\mathcal{C}}-\text{mod}).
	\end{align*}
	
	From the first part of the proof, there is a graded surface $(\Sigma,\overline{\Lambda})$ such that
	\begin{align*}
	\mathcal{W}(\Sigma;\overline{\Lambda})\simeq D^b(\mathcal{A}_{\overline{\mathcal{C}}}-\text{mod}).
	\end{align*}
	Now, since $\mathcal{E}_1^-(0,k)$ is identified with $E^-_{1,k}[-1]$ in \eqref{IdentifyingModules}, removing the stops on the left distinguished boundary corresponds to localising $D^b(\mathcal{A}_{\overline{\mathcal{C}}}-\text{mod})$ by the category generated by $\bigoplus_{k=0}^{d_1-1}\mathcal{E}_1(0,k)^-$, which yields the result. The cases of $r_{1,-}>r_{n,+}=0$ or $r_{0,-}=r_{n-1,+}=0$ are analogous. 
\end{proof}
\begin{rmk}
Choosing different values for $m_i$ and $j_i$ in the above theorem corresponds to changing the identification of the cylinders on the A--side by a cyclic reordering. This yields homeomorphic mirrors, since cyclically changing the identification of an individual annulus, and/ or reordering the annuli in a column, does not change the number of cycles, or their length, in the cycle decomposition determining the topology of the surface. 
\end{rmk}
\section{Characterisation of perfect derived categories}\label{PerfectObjectsSection}
In order to establish the statement about perfect objects in Theorem \ref{HMSTheorem}, one must show that the compact Fukaya category and derived category of perfect complexes, considered as full subcategories of $\mathcal{W}(\Sigma;\Lambda)$ and $D^b(\mathcal{A}_{\mathcal{C}}-\text{mod})$, respectively, are identified with each other under the quasi-equivalence of Theorem \ref{WrappedTheorem}. The aim of this section is to characterise perfect complexes on the A-- and B--sides of the correspondence before establishing Theorem \ref{HMSTheorem}.
\subsection{The derived category of perfect complexes}
As in the localisation argument, our strategy closely follows that of \cite[Theorem 2]{Burban2009TiltingON} for the non-stacky case. Let $\mathcal{C}$ be a cycle or chain of curves with $r_{1,-},r_{n,+}>0$, $\mathcal{F}$ as in Section \ref{OrderSection}, and consider the functor
\begin{align*}
\perf \mathcal{C}&\rightarrow D^b(\mathcal{A}_{\mathcal{C}}-\text{mod})\\
G&\mapsto \mathcal{F}_\mathcal{C}\otimes_{\OO_\mathcal{C}} G.
\end{align*}
In the non-stacky case, it is shown that this functor is full and faithful in \cite[Theorem 2 (5)]{Burban2009TiltingON}, and this result is generalised to the orbifold case in \cite[Proposition 4.1.3]{LekiliPolishchukAuslander}. As in these cases, one can again identify the essential image of $\perf \mathcal{C}$ in $D^b(\mathcal{A}_{\mathcal{C}}-\text{mod})$ as the subcategory which is both left and right orthogonal to the category $\mathcal{T}$ defined in Section \ref{BSideLocalisation}. The proof of this follows verbatim from the proof of \cite[Proposition 4.1.3 (i)]{LekiliPolishchukAuslander} after replacing $\mu_r$ by $H$, an extension of $\mu_r$ by $\mu_d$. In the case of a cycle of curves with $r_{n,+}>r_{1,-}=0$, the category of compactly supported perfect complexes on $\mathcal{C}$ is identified with the category which is both left and right orthogonal to $\overline{\mathcal{T}}$, where this category is formed by the objects of $\mathcal{T}$, together with $\mathcal{E}_1(k)^-$ for $0\leq k\leq d_1-1$. To prove this, observe that $\mathcal{E}_1^-(0,k)\simeq \begin{pmatrix}
\OO_{q_{1,-}}(k\chi_{d_{1,-}})\\
\OO_{q_{1,-}}(k\chi_{d_{1,-}})
\end{pmatrix}$ near $q_{1,-}$, and so a module in $\perf\overline{\mathcal{C}}$ is left or right orthogonal to $\bigoplus_{k=0}^{d_1-1}\mathcal{E}_1^-(0,k)$ if and only if its support does not contain $q_{1,-}$. Then, the rest of the proof in this case follows as in \cite[Proposition 4.1.3 (ii)]{LekiliPolishchukAuslander}. The cases when $r_{1,-}>r_{n,+}=0$ and $r_{1,-}=r_{n,+}=0$ are considered similarly.
\subsection{Characterisation of the Fukaya category}
On the A--side of the correspondence, the characterisation of the Fukaya category as a subcategory of the partially wrapped Fukaya category remains unchanged from \cite[Section 4.2]{LekiliPolishchukAuslander}. We briefly recall the argument here, and refer to loc. cit. for the proof.\\

Let $\mathcal{T}_i$ be the collection of Lagrangians supported near the stops on the $i^{\text{th}}$ boundary component. It is shown (\cite[Proposition 4.2.1]{LekiliPolishchukAuslander}) that $\mathcal{T}_i^\perp=^{\perp}\mathcal{T}_i$ corresponds to those Lagrangians in $\mathcal{W}(\Sigma;\Lambda)$ \emph{not} ending on the $i^{\text{th}}$ boundary component. One direction of this argument is clear: if there is a Lagrangian which is either compact, or does not end on the $i^{\text{th}}$ boundary component, then the intersection with the geometric representatives of Lagrangians supported near the stops can be taken to be empty. In the other direction, one shows that if a Lagrangian \emph{does} end on a boundary component, then there is necessarily a non-trivial morphism at the level of cohomology between this Lagrangian and a Lagrangian in  $\mathcal{T}_i$. In the case where just one endpoint of the Lagrangian lies on the $i^{\text{th}}$ boundary component there is a chain level morphism between the Lagrangian and a Lagrangian in $\mathcal{T}_i$ which is of rank one, so the differential vanishes. In the case where both endpoints lie on the $i^{\text{th}}$ boundary component, the chain level morphism complex between the Lagrangian and a Lagrangian in $\mathcal{T}_i$ is either rank one or two. In the rank one case we again have that the differential must vanish, and in the rank two case one shows that the differential vanishes by a covering argument. This shows that any Lagrangian with at least one endpoint on the $i^{\text{th}}$ boundary component cannot belong to $\mathcal{T}_i^\perp$. Checking that a Lagrangian with at least one endpoint on the $i^{\text{th}}$ boundary component cannot belong to  $^{\perp}\mathcal{T}_i$ is done in the same way.\\

By summing over the boundary components of $\Sigma$ we define $\mathcal{T}=\bigoplus_{i}\mathcal{T}_i$. Then, \cite[Corollary 4.2.2]{LekiliPolishchukAuslander} shows:
\begin{itemize}
	\item In the case of a cycle of curves, the subcategory $\mathcal{F}(\Sigma)\subseteq \mathcal{W}(\Sigma;\Lambda)$ coincides with $\mathcal{T}^\perp=^\perp\mathcal{T}$, where $\mathcal{T}$ is the category generated by the objects $E^{\pm}_{i,j}$. 
	\item In the case of a chain of curves with $r_{1,-},r_{n,+}>0$, the subcategory\\ $\mathcal{F}(\Sigma;(r_{1,-})^{d_1},(0)^{b-d_1-d_n},(r_{n,+})^{d_n})\subseteq \mathcal{W}(\Sigma;\Lambda)$ coincides with $\mathcal{T}^\perp=^\perp\mathcal{T}$, where $\mathcal{T}$ is the category generated by $E_{i,j}^+$ for $i\in\{1,\dots, n-1\}$ and $E_{i,j}^-$ for $i\in\{2,\dots, n\}$.
\end{itemize}
\begin{proof}[Proof of Theorem \ref{HMSTheorem}]
In the case of a cycle of curves, or a chain where $r_{1,-},r_{n,+}>0$, the theorem follows from the observation that the generating objects of the category $\mathcal{T}$ on both sides of the correspondence are identified under the equivalence given in Theorem \ref{WrappedTheorem}. In the case where $r_{n,+}>r_{1,-}=0$ we again consider $\overline{\mathcal{C}}$ such that $\mathcal{C}=\overline{\mathcal{C}}\setminus\{q_{1,-}\}$. Then, the statement follows from using the characterisation of $\perf\mathcal{C}\subseteq D^b(\mathcal{A}_{\overline{\mathcal{C}}}-\mathrm{mod})\simeq\mathcal{W}(\Sigma;\overline{\Lambda})$ as the category which is both left and right perpendicular to $\overline{\mathcal{T}}$. 
\end{proof}
\section{Milnor fibres of invertible curve singularities}\label{Applications section}
Whilst more could be made of examples of mirror symmetry from Theorems \ref{WrappedTheorem} and \ref{HMSTheorem}, our primary motivation is in the study of (quotients of) Milnor fibres of invertible curve singularities. Indeed, it is the ability to handle equivariance on the A--side by dealing with the situation abstractly which was the impetus for generalising the strategy of Lekili and Polishchuk. \\
In this section, we establish Theorem \ref{InvertiblePolyTheorem} by firstly applying Theorems \ref{WrappedTheorem} and \ref{HMSTheorem} to the curves appearing as the B--model of invertible polynomials in two variables. We then show that the surfaces constructed are graded symplectomorphic to $\widecheck{V}/\widecheck{\Gamma}$. \\

To begin with, recall the definition of invertible polynomials and the maximal symmetry group, as defined in the introduction. For simplicity, we will restrict ourselves to the case of two variable invertible polynomials, although much of the following is true in generality (\cite[Section 2]{LekiliUeda}, \cite[Section 1]{EbelingTakahashi}, \cite[Section 3]{Krawitz}).\\

By construction, the map in \eqref{GradingInclusionEq} fits in to the short exact sequence
\begin{align}
1\rightarrow\C^*\xrightarrow{\phi}\Gamma_\w\rightarrow \text{ker}\ \chi_\w /\langle j_\w\rangle\rightarrow 1,
\end{align}
where $j_\w=(e^{2\pi \sqrt{-1}\frac{d_1}{h}},e^{2\pi \sqrt{-1}\frac{d_2}{h}})$ generates the cyclic group $\text{im}(\phi)\cap\ker\chi_{\w}$, and is called the grading element. Recall that a subgroup of $\Gamma\subseteq \Gamma_\w$ of finite index is called admissible if it contains $\phi(\C^*)$. For each $\Gamma$ we denote $\chi:=\chi_{\w}|_{\Gamma}$, and define $\overline{\Gamma}=\ker\chi$. Note that, by construction, $\langle j_{\w}\rangle\subseteq \overline{\Gamma}$, and $[\ker \chi_\w:\overline{\Gamma}]<\infty$. Moreover, such subgroups of finite index containing the group generated by the grading element are in bijection with finite index subgroups $\Gamma\subseteq \Gamma_{\w}$ containing $\text{im}(\phi)$. \\

Given an admissible subgroup $\Gamma\subseteq \Gamma_{\w}$ of index $\ell$, one defines the dual group as in \eqref{DualGpDefn}. This acts naturally on $\A^2$ through its inclusion in $\ker\chi_{\widecheck{\w}}$, and in each case, $\widecheck{\Gamma}=\mu_{\ell}$ acts on $\A^2$ by
\begin{align}\label{GammaCheckAction}
\xi\cdot (x,y)=(\xi x,\xi^{-1} y).
\end{align}
This can be checked directly, or deduced from the fact that $\widecheck{\Gamma}$ is a diagonal matrix in $\mathrm{SL}_2(\C)$, and so its two entries must be inverses of each other. Clearly, the only fixed point of this action is the origin, which is not a point in the Milnor fibre of any invertible polynomial in two variables, and so the quotient of the Milnor fibre by $\widecheck{\Gamma}$ is again a manifold. 
\subsection{Loop polynomials}\label{loopHMS}
For a loop polynomial $\w=x^py+y^qx$, where we take $p\geq q$, we consider $\W=x^py+y^qx+xyz$, and the corresponding stack
\begin{align*}
Z_{\w_{\mathrm{loop}},\Gamma}:=[\big(\W^{-1}(0)\setminus\{\boldsymbol{0}\}\big)/\Gamma],
\end{align*}
where we take the action of $\Gamma$ to be given by its inclusion to $\Gamma_\w$. Let $\ell=[\Gamma_\w:\Gamma]$ and identify $\Gamma\simeq\C^*\times\mu_{\frac{d}{\ell}}$, where $d=\gcd(p-1,q-1)$. The stack $Z_{\w_{\mathrm{loop}},\Gamma}$ has a natural interpretation as a codimension one closed substack in the toric DM orbifold $[\big(\A^3\setminus\{\boldsymbol{0}\}\big)/\Gamma]$. The unique stacky fan describing this DM orbifold is readily checked to be given by the data of 
\begin{align*}
\beta:\Z^3\xrightarrow{\begin{pmatrix}
	\frac{p-1}{\ell} & \frac{1-q}{\ell} & 0 \\
	0 & q-1 & -1
	\end{pmatrix}} \Z^2=:N,
\end{align*}
and each column corresponds to a ray of the fan $\Sigma$. The maximal cones of the fan are given by the span of any two rays. In general, this is a quotient of weighted projective space by $\mu_{\frac{d}{\ell}}$.
\begin{rmk} It is worth noting that we have made a choice in the identification $\Gamma\simeq\C^*\times\mu_{\frac{d}{\ell}}$, and thus how $\Gamma$ acts on $\A^3\setminus\{\boldsymbol{0}\}$; however, the above fan is independent of this choice. Choosing a different identification of $\Gamma$ corresponds to choosing different change-of-basis matrices in the Smith normal form decomposition of $[BQ]^\vee$ used to calculate its cokernel.
\end{rmk}
With this description, one can see that $\mathcal{C}_1=\{y=0\}\subseteq Z_{\w_{\mathrm{loop},\Gamma}}$ is the closed substack of $[\big(\A^3\setminus\{\boldsymbol{0}\}\big)/\Gamma]$ corresponding to the ray $\rho_2=\frac{1-q}{\ell}e_1+(q-1)e_2$, and similarly that $\mathcal{C}_3=\{x=0\}$ is the closed substack corresponding to the ray $\rho_1=\frac{p-1}{\ell}e_1$. The quotient fan $\boldsymbol{\Sigma}/\boldsymbol{\rho_2}$ is given by the complete fan in $\mathbb{Q}$, and
\begin{align*}
\beta(\rho_2):\Z^2\xrightarrow{\begin{pmatrix}
	p-1 & -1\\
	\frac{1-p}{\ell} & 0
	\end{pmatrix}} \Z\oplus \Z/(\frac{q-1}{\ell})=:N(\rho_2)
\end{align*}
This is a $\mu_{\frac{q-1}{\ell}}$-gerbe over $\P_{p-1,1}$, and \cite[Theorem 7.24]{FantechiMannNironi}  establishes that there is an isomorphism of toric DM stacks
\begin{align*}
\mathcal{C}_1\simeq \sqrt[\frac{q-1}{\ell}]{\OO(-\frac{p-1}{\ell}q_{1,-})/\P_{p-1,1}}.
\end{align*}
Similarly, we have an isomorphism of toric DM stacks
\begin{align*}
\mathcal{C}_3\simeq\sqrt[\frac{p-1}{\ell}]{\OO(\frac{q-1}{\ell}q_{3,+})/\P_{1,q-1}}.
\end{align*}
The curve $\mathcal{C}_2$ is always an orbifold, and can be identified with $\mathcal{C}_2\simeq\P_{\frac{q-1}{\ell},\frac{p-1}{\ell}}$.
\begin{rmk}
It is worth reiterating that we are not claiming that the gerbe structures of $\mathcal{C}_1$ and $\mathcal{C}_3$ are given as above, only that there is an isomorphism of DM stacks. Due to this, there is some freedom in the identifications, and we have chosen these for later convenience.
\end{rmk}
The majority of the analysis in studying the modules over the Auslander sheaf is at $q_3=|\mathcal{C}_3|\cap|\mathcal{C}_1|$, which corresponds to the point $[0:0:1]\in|\mathcal{X}|$. This node is presented as the quotient of $xy=0$ by the action of $\mu_{\frac{(p-1)(q-1)}{d}}\times\mu_{\frac{d}{\ell}}$ given by
\begin{align*}
(t,\xi)\cdot(x,y)=(t^{\frac{p-1}{d}}\xi^{-n}x, t^{\frac{q-1}{d}}\xi^{m}y),
\end{align*}
where $m,n$ are B\'ezout coefficients solving
\begin{align}\label{loopBezout}
m(p-1)+n(q-1)=d.
\end{align}
Therefore the gerbe structure of the point $q_{3,+}$ is determined by the cohomology class in  $\Z/\gcd(q-1,\frac{p-1}{\ell})\simeq H^2([\A^1/\mu_{q-1}],\mu_{\frac{p-1}{\ell}})$ corresponding to the $\bmod \frac{p-1}{\ell}$ reduction of $\frac{(\ell-1)(q-1)}{\ell}\in\Z$. Similarly, we have that the gerbe at $q_{1,-}\in|\mathcal{C}_1|$ is classified by the cohomology class in $\Z/\gcd(p-1,\frac{q-1}{\ell})\simeq H^2([\A^1/\mu_{p-1}],\mu_{\frac{q-1}{\ell}})$ corresponding to the $\bmod \frac{q-1}{\ell}$ reduction of $\frac{p-1}{\ell}\in\Z$. The corresponding short exact sequences at $q_{3,+}$ and $q_{1,-}$ are 
\begin{align}
&1\rightarrow\mu_{\frac{p-1}{\ell}}\xrightarrow{\varphi_{3,+}} \mu_{\frac{(p-1)(q-1)}{d}}\times\mu_{\frac{d}{\ell}}\xrightarrow{\psi_{3,+}}\mu_{q-1}\rightarrow 1,\ \text{and} \label{C3SES}\\
&1\rightarrow\mu_{\frac{q-1}{\ell}}\xrightarrow{\varphi_{1,-}} \mu_{\frac{(p-1)(q-1)}{d}}\times\mu_{\frac{d}{\ell}}\xrightarrow{\psi_{1,-}}\mu_{p-1}\rightarrow 1, \label{C1SES}
\end{align}
respectively. Here $\lambda_{\pm}$, $\eta$, and $\xi$ are
\begin{align*}
\lambda_{+}=e^{2\pi\sqrt{-1}\frac{\ell}{p-1}},\quad &\lambda_{-}=e^{2\pi\sqrt{-1}\frac{\ell}{q-1}},\quad \\
\eta=e^{2\pi\sqrt{-1}\frac{d}{(p-1)(q-1)}},\quad & \xi=e^{2\pi\sqrt{-1}\frac{\ell}{d}},
\end{align*}
and $\varphi_{3,+}$ is the map $\lambda_{+}\mapsto (\eta^{-n\frac{(q-1)\ell}{d}},\xi^{-1})$, $\psi_{3,+}$ is $(\eta^a,\xi^b)\mapsto \eta^{\frac{p-1}{d}a}\xi^{-nb}$, $\varphi_{1,-}$ is $\lambda_{-}\mapsto (\eta^{m\frac{(p-1)\ell}{d}},\xi^{-1})$, $\psi_{1,-}$ is $(\eta^a,\xi^b)\mapsto \eta^{\frac{q-1}{d}a}\xi^{mb}$, where $m,n$ are again the B\'ezout coefficients of \eqref{loopBezout}.\\

From this description, we have that the group $H_3$ acts on the fibre of $\OO_{\widetilde{\mathcal{C}}_3}(-q_{3,+})$ at $q_{3,+}$ with weight $\chi_{r_{3,+}}=(\frac{p-1}{d},-n)\in\Z/(\frac{(p-1)(q-1)}{d})\oplus \Z/(\frac{d}{\ell})\simeq\widehat{H}_3$ for $m,n$ solving \eqref{loopBezout}, and similarly $\OO_{\widetilde{\mathcal{C}}_1}(-q_{1,-})$ at $q_{1,-}$ is acted on with weight $\chi_{r_{1,-}}=(\frac{q-1}{d},m)$. The character with which $H_3$ acts on the fibre of $\mathcal{N}_{3}$ is (non-uniquely) determined by the condition that $\frac{p-1}{\ell}\chi_{d_{3,+}}=\frac{1-q}{\ell}\chi_{r_{3,+}}$, and maps to a unit in $\Z/(\frac{p-1}{\ell})$ under the dual of $\varphi_{3,+}$. The natural choice for this is $\chi_{d_{3,+}}=-\chi_{r_{1,-}}$, and similarly we choose $\chi_{d_{1,-}}=\chi_{r_{3,+}}$.\\

In $\widehat{H}_3$, we label the characters such that $\chi_{k_+(q-1)+i}=-i\chi_{r_{3,+}}+k_+\chi_{d_{3,+}}$ for $k_+\in\{0,\dots,\frac{p-1}{\ell}-1\}$ and $i\in\{0,\dots, q-2\}$. This is the B--side version of labelling the stops on the right side of the left column of cylinders top-to-bottom. With this ordering, the sheaf on $\widetilde{\mathcal{C}}_1$ whose fibre at $q_{1,-}$ is acted on by $H_3$ with character $\chi_{k_+(q-1)+i}$ is given by 
\begin{align*}
\OO_{\widetilde{\mathcal{C}}_1}(jq_{1,-})\otimes\mathcal{N}_1^{\otimes k_-},
\end{align*}
where $j\in\{0,\dots,p-2\}$ and $k_-\in\{0,\dots,\frac{q-1}{\ell}-1\}$ solves
\begin{align}\label{WeightCompatibility}
-j\chi_{r_{1,-}}+k_-\chi_{d_{1,-}}=-i\chi_{r_{3,+}}+k_+\chi_{d_{3,+}}.
\end{align}
A solution to this is readily checked to be given by 
\begin{align}
\begin{split}\label{WeightsEqn}
k_-&=-i\bmod\frac{q-1}{\ell}\\
j&=k_+-\frac{p-1}{\ell}\big\lfloor\frac{-i\ell}{q-1}\big\rfloor\bmod p-1.
\end{split}
\end{align}
Fixing $m_i=-1$ and $j_i=0$ as in the proof of Theorem \ref{WrappedTheorem}, one computes 
\begin{align*}
&\Ext^1(\mathcal{S}_{q_3}\{-i\chi_{r_{3,+}}+k_+\chi_{d_{3,+}}\},\mathcal{P}_3(0,(i-1)\bmod q-1,k_+))=\C\cdot a(i,k_+),\quad \text{and}\\
&\Ext^1(\mathcal{S}_{q_{3}}\{-i\chi_{r_{3,+}}+k_+\chi_{d_{3,+}}\},\mathcal{P}_1((j-1)\bmod {p-1},-1,k_-))=\C\cdot b(j,k_-)
\end{align*}
for $j,k_-$ as in \eqref{WeightsEqn}. \\

Consider now the nodes $q_1=|\mathcal{C}_1|\cap|\mathcal{C}_2|$ and $q_2=|\mathcal{C}_2|\cap|\mathcal{C}_3|$. The structure of these nodes is far more simple, and at $q_1$ we have the node is presented as the quotient of $xy=0$ by the action of $\mu_{\frac{q-1}{\ell}}$ given by 
\begin{align*}
t\cdot (x,y)=(x,ty),
\end{align*}
and analogously for $q_2$. Therefore, one has $\widehat{H}_1\simeq\Z/(\frac{q-1}{\ell})$ and $\widehat{H}_2\simeq \Z/(\frac{p-1}{\ell})$, and $\chi_{r_{2,-}}$  and $\chi_{r_{2,+}}$ are the identity in $\Z/(\frac{q-1}{\ell})$ and $\Z/(\frac{p-1}{\ell})$, respectively. The character with which $H_1$ acts on the fibre of $\mathcal{N}_1$ at $q_{1,+}$ (resp. on $\mathcal{N}_3$ at $q_{3,-}$) is any unit of $\widehat{H}_1$ (resp. $\widehat{H}_2$), and so we choose $\chi_{d_1,+}$ to be the identity and $\chi_{d_{3,-}}$ to be minus the identity in their respective character groups. With this, the morphisms between objects in the exceptional collection supported at $q_1$ are readily checked to be
\begin{align*}
&\Ext^1(\mathcal{S}_{q_1}\{c\},\mathcal{P}_1(0,-1,c))=\C\cdot a(0,c)\\
&\Ext^1(\mathcal{S}_{q_{1}}\{c\},\mathcal{P}_2((-1-c)\bmod \frac{q-1}{\ell},-1))=\C\cdot b(-c),
\end{align*}
and similarly for the morphisms between objects supported at $q_2$.\\

As the mirror to $\mathcal{C}$, we take the surface given by gluing $A(p-1,1;\frac{q-1}{\ell})$, $A(\frac{q-1}{\ell},\frac{p-1}{\ell};1)$ and $A(1,q-1;\frac{p-1}{\ell})$ via the permutations $\sigma_1=\text{id}\in\mathfrak{S}_{\frac{q-1}{\ell}}$, $\sigma_2=\text{id}\in\mathfrak{S}_{\frac{p-1}{\ell}}$, and $\sigma_3\in\mathfrak{S}_{\frac{(p-1)(q-1)}{\ell}}$ is given by 
\begin{align*}
k_+(q-1)+i\mapsto k_-(p-1)+(-j)\bmod p-1
\end{align*}
for $i,j$ solving $\eqref{WeightsEqn}$. From this, it is clear that one boundary component with winding number $-2\frac{q-1}{\ell}$ arises from $\sigma_1$, and similarly that one boundary component with winding number $-2\frac{p-1}{\ell}$ arises from $\sigma_2$. The number of boundary components, and their winding numbers, arising from $\sigma_3$ is given by the number of cycles, and their respective lengths, of  $\sigma^{-1}_3\tau_{\ell_1}\sigma_3\tau_{r_3}$. This permutation is given by 
\begin{align*}
k_+(q-1)+i\mapsto (q-1)\Big((k_+-1)\bmod\frac{p-1}{\ell}\Big)+\Big(i-1+\frac{q-1}{\ell}\Big\lfloor\frac{(k_+-1)\ell}{p-1}\Big\rfloor\Big)\bmod q-1
\end{align*}
and so there are $\gcd(q-1,\frac{p+q-2}{\ell})=\gcd(p-1,\frac{p+q-2}{\ell})$ cycles, each of length $\frac{(p-1)(q-1)}{\gcd(\ell(q-1),p+q-2)}$. Therefore, $\gcd(q-1,\frac{p+q-2}{\ell})$ boundary components arise from this gluing, and each has winding number $-2\frac{(p-1)(q-1)}{\gcd(\ell(q-1),p+q-2)}$. \\

Putting this all together, we have that the surface constructed, call it $\Sigma_{\w_{\text{loop}},\Gamma}$, has $2+\gcd(q-1,\frac{p+q-2}{\ell})$ components, and Euler characteristic given by
\begin{align*}
-\chi(\Sigma_{\w_{\text{loop}},\Gamma})=\frac{q-1}{\ell}+\frac{p-1}{\ell}+\gcd(q-1,\frac{p+q-2}{\ell})\frac{(p-1)(q-1)}{\ell\gcd(q-1,\frac{p+q-2}{\ell})}=\frac{pq-1}{\ell}. 
\end{align*}
Therefore, the genus is 
\begin{align*}
g(\Sigma_{\w_{\text{loop}},\Gamma})=\frac{1}{2\ell}(pq-1-\gcd(\ell(q-1),p+q-2)).
\end{align*}
Applying Theorem \ref{WrappedTheorem} yields a quasi-equivalence
\begin{align*}
D^b(\mathcal{A}_{\mathcal{C}}-\mathrm{mod})\simeq \mathcal{W}\bigg(\Sigma_{\w_{\text{loop}},\Gamma};2\frac{p-1}{\ell},\Big(2\frac{(p-1)(q-1)}{\gcd(\ell(q-1),p+q-2)}\Big)^{\gcd(q-1,\frac{p+q-2}{\ell})},2\frac{q-1}{\ell}\bigg),
\end{align*}
and then Theorem \ref{HMSTheorem} establishes quasi-equivalences
\begin{align*}
D^b\Coh(Z_{\w_{\mathrm{loop}},\Gamma})&\simeq \mathcal{W}(\Sigma_{\w_{\text{loop}},\Gamma})\\
\perf Z_{\w_{\mathrm{loop}},\Gamma}&\simeq\mathcal{F}(\Sigma_{\w_{\text{loop}},\Gamma}).
\end{align*}
In the case of $\Gamma=\Gamma_{\w}$, we observe that the graded surface constructed on the A--side is graded symplectomorphic to the Milnor fibre of the transpose invertible polynomial. To see this, we note that the above gluing is the same as the gluing permutation of \cite[Section 3.2.1]{HabermannHMS}, although where the identification of the cylinders in $A(p-1,1;\frac{q-1}{\ell})$ here have been rotated $-\frac{2\pi}{p-1}$ degrees. It was established in loc. cit. that the surface glued in this way is graded symplectomorphic to the Milnor fibre of $\widecheck{\w}$ by comparing the corresponding ribbon graphs. Building on this strategy, we establish a graded symplectomorphism $\widecheck{V}/\widecheck{\Gamma}\simeq\Sigma_{\w_\text{loop},\Gamma}$ by first making a topological identification via the quotient ribbon graphs, and then deducing that the grading structures match by elimination. \\

Recall the description of $\widecheck{V}$ as $\widecheck{\w}_{\varepsilon}^{-1}(-\delta)$ for $0<\delta \ll \eps$ given in \cite[Section 3]{HabermannSmith}, where 
\begin{align*}
\widecheck{\w}_{\varepsilon}=\widecheck{\w}-\varepsilon\check{x}\check{y}=\check{x}\check{y}(\check{x}^{p-1}+\check{y}^{p-1}-\varepsilon)=\check{x}\check{y}\check{w}.
\end{align*}
Firstly, observe that the Morsification chosen is $\widecheck{\Gamma}$-equivariant, and so taking the quotient commutes with Morsifying. Moreover, since the quotient map is an unramified cover and the deformation retract preserves equivalence classes of the quotient map, the deformation retract which takes $\widecheck{V}$ to its ribbon graph also commutes with the quotient map. With respect to the classification of critical points in \cite[Section 3.1]{HabermannSmith}, we refer to neck regions which form by smoothing critical points of type $(i)$ as neck regions of type $(i)$, and the corresponding node in the ribbon graph as a node of type $(i)$. We refer similarly to neck regions and nodes of type $(ii)$ and $(iii)$. We index the nodes of type $(i)$ and $(ii)$ according to the $\xt$ and $\yt$ argument of the corresponding critical points, respectively. Then, the $l^{\text{th}}$ node of type $(i)$ is identified with the $(l+\frac{p-1}{\ell})^{\text{th}}$ node of type $(i)$ under the action of $\widecheck{\Gamma}$. Similarly, the $m^{\text{th}}$ node of type $(ii)$ is identified with the $(m-\frac{q-1}{\ell})^{\text{th}}$ node of type $(ii)$. This partitions the nodes of the ribbon graph.\\

To understand how $\widecheck{\Gamma}$ partitions the edges, recall that part of the basis for the first homology group of $\widecheck{V}$ is given by the Lagrangians $\vc{\check{y}\check{w}}{l}$ (resp. $\vc{\check{x}\check{w}}{m}$ and $\vc{\check{x}\check{y}}{}$), which were defined as the waist curves which form in the $l^{\text{th}}$ neck region of type $(i)$ (resp. the $m^{\text{th}}$ neck region of type (ii), and the neck region of type $(iii)$) upon smoothing.  Since Morsification commutes with the action of $\widecheck{\Gamma}$, the Lagrangians $\vc{\check{y}\check{w}}{l}$ and $\vc{\check{y}\check{w}}{l+\frac{p-1}{\ell}}$ become identified in the quotient, and therefore so to do the edges of the ribbon graph onto which these Lagrangians deformation retract. The analogous statement for the Lagrangians $\vc{\check{x}\check{w}}{m}$ is also true, and so we see that two loops of the graph are identified with each other when the corresponding nodes are. \\

To understand the action of $\widecheck{\Gamma}$ on the remaining edges, recall that two nodes are connected by an edge if there is a vanishing cycle which passes through both corresponding neck regions. The cyclic ordering of the nodes is determined by the argument of the Lagrangian away from the neck regions which it connects -- see, for example, \cite[Figure 8]{HabermannSmith}.  From this, it is clear that edges between the node of type $(iii)$ and nodes of type $(i)$ (resp. type $(ii)$) are identified in the quotient when the corresponding nodes of type $(i)$ (resp. type $(ii)$) are. All that remains is to understand the action of $\widecheck{\Gamma}$ on edges which connect the nodes of type $(i)$ and $(ii)$. For this, recall (\cite[Section 3.5]{HabermannSmith}) that the remaining vanishing cycles which form a basis of the first homology of $\widecheck{V}$ are given by $\vc{0}{l,m}$ for $l\in\{0,\dots, p-2\}$, $m\in\{0,\dots, q-2\}$, and these are the Lagrangians which pass through the $l^{\text{th}}$ neck region of type $(i)$ and the $m^{\text{th}}$ neck region of type $(ii)$. By analysing the action of $\widecheck{\Gamma}$ on the $\xt$ and $\yt$ projections of the Milnor fibre, as given in \cite[Section 3.3]{HabermannSmith}, we see that $\vc{0}{l,m}$ gets identified with $\vc{0}{l+\frac{p-1}{\ell},m-\frac{q-1}{\ell}}$ as it enters the $\{\check{w}=\varepsilon\}$ component the Milnor fibre\footnote{Note that we are not claiming that this identification is made globally, just that these two Lagrangians agree as they enter the $\{\check{w}=\varepsilon\}$ part of the curve.}. Away from the neck regions which connect it to the smoothings of the $\{\xt=0\}$ and $\{\yt=0\}$ components, $\{\check{w}=\varepsilon\}$ is an unramified cover of $\big\{\{u+v=\varepsilon\}\setminus(B_{\delta}(\varepsilon,0)\cup B_{\delta}(0,\varepsilon))\big\}\subseteq \C^2$, and so the Lagrangians $\vc{0}{l,m}$ and $\vc{0}{l+\frac{p-1}{\ell},m-\frac{q-1}{\ell}}$ get identified in the component $\{\check{w}=\varepsilon\}$. Therefore, the edge of the ribbon graph connecting the $l^{\text{th}}$ node of type $(i)$ with the $m^{\text{th}}$ node of type $(ii)$ gets identified with the edge connecting the $(l+\frac{p-1}{\ell})^{\text{th}}$ node of type $(i)$ with the $(m-\frac{q-1}{\ell})^{\text{th}}$ node of type $(ii)$ -- see Figure \ref{Ribbon_graph_unquotiented} for an example. Note that this identifies the cyclic ordering of the two nodes in a non-trivial way. Moreover, the pushforward of the basis of the first homology for the ribbon graph of $\widecheck{V}$ given by the deformation retract of vanishing cycles spans the first homology of the quotient ribbon graph. Therefore, the pushforward of vanishing cycles spans the first homology of $\widecheck{V}/\widecheck{\Gamma}$. It should be emphasised, however, that we are making no attempt to precisely describe a basis of Lagrangians on $\widecheck{V}/\widecheck{\Gamma}$; we only claim that the vanishing cycles span the first homology of $\widecheck{V}/\widecheck{\Gamma}$. In general, two Lagrangians $\vc{0}{l,m}$ and $\vc{0}{l+\frac{p-1}{\ell},m-\frac{q-1}{\ell}}$ are not isotopic in the quotient, but are related by Dehn twists around the waist curves of the cylinders through which they both pass.

\begin{figure}[h!]
	\centering
	\begin{tikzpicture}
	\tikzset{every loop/.style={}}
	\node[circle,fill=black] (C)at (0,0) {} edge [in=250,out=110,loop] ();
	\node[circle,fill=blue] (R1) at (1.5,2) {} edge [in=340,out=200,loop] ();
	\node[circle,fill=yellow] (R2) at (3,2) {} edge [in=340,out=200,loop] ();
	\node[circle,fill=black] (R3) at (4.5,2) {} edge [in=340,out=200,loop] ();
	\node[circle,fill=black] (R4) at (6,2) {} edge [in=340,out=200,loop] ();
	\node[circle,fill=blue] (L1) at (-1.5,2) {} edge [in=340,out=200,loop] ();
	\node[circle,fill=yellow] (L2) at (-3,2) {} edge [in=340,out=200,loop] ();
	\node[circle,fill=black] (L3) at (-4.5,2) {} edge [in=340,out=200,loop] ();
	\node[circle,fill=black] (L4) at (-6,2) {} edge [in=340,out=200,loop] ();
	\draw[red] (C) to [in=270,out=40] (R1);
	\draw[black!30!green] (C) to [in=270,out=20] (R2);
	\draw[red] (C) to [in=270,out=0] (R3);
	\draw[black!30!green] (C) to [in=270,out=-20] (R4);
	\draw[red] (C) to [in=270,out=140](L1);
	\draw[black!30!green] (C) to [in=270,out=160](L2);
	\draw[red] (C) to [in=270,out=180](L3);
	\draw[black!30!green] (C) to [in=270,out=200](L4);
	
	\draw[red] (R1) edge [in=144,out=144] (L1) ;
	\draw[black!30!green] (R2) edge [in=36, out=36, looseness=0.5] (L4);
	\draw[red] (4.5,2) (R3) edge [in=72,out=72, looseness=0.5] (L3);
	\draw[black!30!green] (R4) edge [in=108,out=108, looseness=0.6] (L2);
	\end{tikzpicture}
	\caption{Part of the ribbon graph corresponding to $\widecheck{V}$ for $\widecheck{\w}=\check{x}^5\check{y}+\check{y}^5\check{x}$. For clarity, we have only drawn the edges which form the cycles onto which the vanishing cycles $\vc{0}{i,-i}$ for $i\in\{0,1,2,3\}$ deformation retract. In the quotient of $\widecheck{V}$ by $\widecheck{\Gamma}=\mu_2$, the two red cycles and two green cycles are identified, and the representatives of the nodes are given by the blue and yellow nodes (recall $\arg \check{x}=-\arg\check{y}$), together with the node of type $(iii)$. In the case of $\widecheck{\Gamma}=\mu_4$, all coloured cycles are identified, and the blue nodes, as well as the node of type $(iii)$, are taken as the representative in the quotient.}
	\label{Ribbon_graph_unquotiented}
\end{figure}
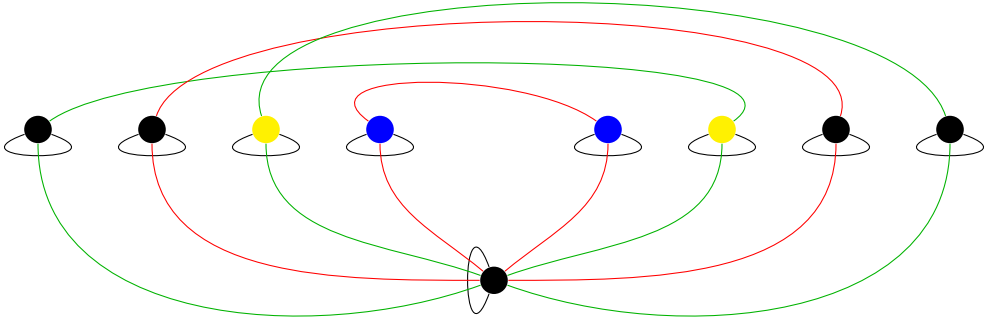

Since the cyclic ordering at nodes is identified in a non-trivial way, one must choose a representative of each equivalence class of nodes to work with a specific ordering. By convention, we will choose the nodes of type $(i)$ corresponding to the neck regions which arise from smoothing the critical points with argument $\arg \check{x}\in \{0,\frac{2\pi}{p-1},\dots, \frac{2\pi(p-1-\ell)}{\ell(p-1)}\}$, and similarly we choose the nodes of type $(ii)$ to correspond to the smoothing of the critical points of type $(ii)$ with $\arg \check{y}\in \{0,-\frac{2\pi}{q-1},\dots, -\frac{2\pi(q-1-\ell)}{\ell(q-1)}\}$. Figures \ref{Figure_mu2_quotient} and \ref{Figure_mu4_quotient} show the cases of $\widecheck{V}/\widecheck{\Gamma}$ for $\widecheck{V}$ the Milnor fibre of $\xt^5\yt+\yt^5\xt$ and $\widecheck{\Gamma}=\mu_2,\ \mu_4$, respectively. From this, we see that the surface corresponding to this quotient ribbon graph is given by gluing $A(p-1,1;\frac{q-1}{\ell})$, $A(\frac{q-1}{\ell},\frac{p-1}{\ell};1)$ and $A(1,q-1;\frac{p-1}{\ell})$ via the permutations $\sigma_1=\text{id}\in\mathfrak{S}_{\frac{q-1}{\ell}}$, $\sigma_2=\text{id}\in\mathfrak{S}_{\frac{p-1}{\ell}}$, and $\sigma_3\in\mathfrak{S}_{\frac{(p-1)(q-1)}{\ell}}$, where $\sigma_3$ is given by 
\begin{align*}
k_+(q-1)+i\mapsto \big((-i)\bmod \frac{q-1}{\ell}\big)(p-1)+(p-2-k_++\frac{p-1}{\ell}\lfloor\frac{-i\ell}{q-1}\rfloor).
\end{align*}
As in the maximally graded case, this only differs from the gluing given for $\Sigma_{\w_\text{loop},\Gamma}$ by changing the identification of the cylinders in the column $A(p-1,1;\frac{q-1}{\ell})$. \\
\begin{figure}[h!]
	\flushleft
	\begin{minipage}[b]{0.65\textwidth}
		\begin{tikzpicture}
		\tikzset{every loop/.style={}}
		\node[circle,fill=black] at (0,0) {} edge [in=250,out=110,loop] ();
		\node[circle,fill=black] at (2,2) {} edge [in=340,out=200,loop] ();
		\node[circle,fill=black] at (4,2) {} edge [in=340,out=200,loop] ();
		\node[circle,fill=black] at (-2,2) {} edge [in=340,out=200,loop] ();
		\node[circle,fill=black] at (-4,2) {} edge [in=340,out=200,loop] ();
		\draw (0,0) to [in=270,out=30] (2,2);
		\draw (0,0) to [in=270,out=-30] (4,2);
		\draw (0,0) to [in=270,out=150](-2,2);
		\draw (0,0) to [in=270,out=210](-4,2);
		
		\draw (2,2) to [in=144,out=144, ](-2,2);
		\draw (2,2) to [in=72,out=108,looseness=0.75](-4,2);
		\draw (2,2) to [in=72,out=72](-2,2);
		\draw (2,2) to [in=144,out=36](-4,2);
		
		\draw (4,2) to [in=108,out=144,looseness=1.25](-2,2);
		\draw (4,2) to [in=36,out=108,looseness=0.75](-4,2);
		\draw (4,2) to [in=36,out=72,looseness=0.75](-2,2);
		\draw (4,2) to [in=108,out=36,looseness=1](-4,2);
		\end{tikzpicture}	
		\caption{Ribbon graph corresponding to $\widecheck{V}/\mu_2$ for $\widecheck{\w}=\check{x}^5\check{y}+\check{y}^5\check{x}$.}
		\label{Figure_mu2_quotient}
	\end{minipage}%
	\begin{minipage}[b]{0.65\textwidth}
		\begin{tikzpicture}
		\tikzset{every loop/.style={}}
		\node[circle,fill=black] at (0,0) {} edge [in=250,out=110,loop] ();
		\node[circle,fill=black] at (2,2) {} edge [in=340,out=200,loop] ();
		\node[circle,fill=black] at (-2,2) {} edge [in=340,out=200,loop] ();
		\draw (0,0) to [in=270,out=0] (2,2);
		\draw (0,0) to [in=270,out=180](-2,2);
		
		\draw (2,2) to [in=144,out=144](-2,2);
		\draw (2,2) to [in=108,out=108](-2,2);
		\draw (2,2) to [in=72,out=72](-2,2);
		\draw (2,2) to [in=36,out=36](-2,2);
		\end{tikzpicture}
		\caption{Ribbon graph
			\newline corresponding to $\widecheck{V}/\mu_4$
			\newline for $\widecheck{\w}=\check{x}^5\check{y}+\check{y}^5\check{x}$.}
		\label{Figure_mu4_quotient}
	\end{minipage}
\end{figure}

To identify the line field used to grade $\widecheck{V}/\widecheck{\Gamma}$, observe that the pushforward of any vanishing cycle in $\widecheck{V}$ is gradable with respect to the line field which is horizontal on cylinders and parallel to the edges of the attaching strips, which we denote by $\eta$. Indeed, for the waist curves to be gradable, the only possible line field is the one which is horizontal on cylinders. To see that the pushforward of the Lagrangians $\vc{0}{l,m}$ are gradable with respect to $\eta$, observe that the pushforward of such a Lagrangian deformation retracts onto a cycle of the quotient ribbon graph which passes through three nodes, one each of type $(i)$, $(ii)$, and $(iii)$. Therefore, the pushforward Lagrangian is characterised by which attaching strips it passes through, as well as some number of Dehn twists about the waist curves in the cylinders which the attaching strips connect, and any such Lagrangian is gradable with respect to $\eta$. By the uniqueness (up to homotopy) of the line field with respect to which the pushforward of the vanishing cycles of $\widecheck{V}$ are all gradable, the line field on $\widecheck{V}/\widecheck{\Gamma}$ is homotopic to $\eta$. This completes the proof of Theorem \ref{InvertiblePolyTheorem} in the case of loop polynomials. 

\subsection{Chain polynomials} For a chain polynomial $\w=x^py+y^q$ we consider $\W=x^py+y^q+xyz$, and $\Gamma\subseteq \Gamma_{\w}$ of index $\ell$ with identification $\Gamma\simeq\C^*\times\mu_{\frac{d}{\ell}}$, where $d:=\gcd(p,q-1)$. We define the corresponding stack 
\begin{align*}
Z_{\w_{\mathrm{chain}},\Gamma}:=[\big(\W^{-1}(0)\setminus\{\boldsymbol{0}\}\big)/\Gamma],
\end{align*}
where $\Gamma$ acts by its inclusion into $\Gamma_{\w}$. This stack has two irreducible components -- the first is $\mathcal{C}_2=\{x^p+y^{q-1}+xz=0\}\simeq \P_{\frac{(p-1)(q-1)}{\ell},\frac{q-1}{\ell}}$, and the second we identify with a $\mu_{\frac{q-1}{\ell}}$-gerbe over $\P_{1,p-1}$ as follows: We identify $\mathcal{C}_1$ as the closed substack of $Z_{\w_{\mathrm{chain}},\Gamma}$ corresponding to the divisor $\{y=0\}$. Analogously to the loop case, we see that the quotient stack $[(\A^3\setminus\{\boldsymbol{0}\})/\Gamma]$ corresponds to the stacky fan given by the data of a morphism 
	\begin{align*}
	\beta:\Z^3\xrightarrow{\begin{pmatrix}
		1 & 1-q& 1 \\
		0 & \frac{(p-1)(q-1)}{\ell} & -\frac{p}{\ell}
		\end{pmatrix}} \Z^2=:N,
	\end{align*}
	and the rays of the fan $\Sigma$ correspond to the column vectors. The maximal cones of the fan are given by the span of any two rays. In general, this is a quotient of weighted projective space by $\mu_{\frac{d}{\ell}}$.\\
	
With this description, we see that $\mathcal{C}_1$ is the closed substack corresponding to the ray $\rho_2=(1-q)e_1+\frac{(p-1)(q-1)}{\ell}e_2$, and so $\mathcal{C}_1$ is given by the quotient fan consisting of the complete fan in $\mathbb{Q}$, $N=\Z\oplus\Z/\big(\frac{q-1}{\ell}\big)$, and 
\begin{align*}
\beta:\Z^2\xrightarrow{\begin{pmatrix}
	p-1 & -1\\
	-\frac{p}{\ell} &0
	\end{pmatrix}}\Z\oplus\Z/\big(\frac{q-1}{\ell}\big).
\end{align*}
Again, by \cite[Theorem 7.24]{FantechiMannNironi}, we see that there is an equivalence of toric DM stacks
\begin{align*}
\mathcal{C}_1\simeq\sqrt[\frac{q-1}{\ell}]{\OO(-\frac{p}{\ell}q_{1,-})/\P_{p-1,1}}.
\end{align*}
As in the loop case, the computation of the morphisms in the exceptional collection is done locally. To this end, consider a local presentation of the node $q_2=|\mathcal{C}_2|\cap|\mathcal{C}_1|=[0:0:1]$. This is given by the quotient of $xy=0$ by the action of $\mu_{\frac{(p-1)(q-1)}{\ell}}$ given by
\begin{align*}
t\cdot (x,y)=(tx,t^{q-1}y).
\end{align*}
This yields $\chi_{r_{1,-}}=q-1$ and $\chi_{r_{2,+}}=1$. Therefore, the presentation of the gerbe $\mathcal{C}_1$ at $q_{1,-}$ is determined by the class of $\frac{p}{\ell}\bmod \frac{q-1}{\ell}\in\Z/\gcd(p-1,\frac{q-1}{\ell})\simeq H^2([\A^1/\mu_{p-1}],\mu_{\frac{q-1}{\ell}})$. This gives the short exact sequence
\begin{align*}
1\rightarrow \mu_{\frac{q-1}{\ell}}\hookrightarrow\mu_{\frac{(p-1)(q-1)}{\ell}}\xrightarrow{\wedge^{q-1}}\mu_{p-1}\rightarrow 1.
\end{align*}
The action of $H_2$ on $\mathcal{N}_1$ at $q_{1,-}$ is such that $\frac{q-1}{\ell}\chi_{d_{1,-}}=\frac{p}{\ell}\chi_{r_{1,-}}$ in $\Z/(\frac{(p-1)(q-1)}{\ell})=\widehat{H}_2$, and a natural choice for this character is $\chi_{d_{1,-}}=1$. We order the characters in $\widehat{H}_2$ such that $\chi_c=-c$. With this ordering, the sheaf on $\widetilde{\mathcal{C}}_1$ whose fibre at $q_{1,-}$ is acted on by $H_2$ with character $\chi_c$ is given by
\begin{align*}
\OO_{\widetilde{\mathcal{C}}_1}(jq_{1,-})\otimes\mathcal{N}_1^{\otimes k_-},
\end{align*}
where 
\begin{align}\label{ChainWeights}
\begin{split}
k_-&=-c\bmod\frac{q-1}{\ell}\\
j&=-\frac{p}{\ell}\lfloor\frac{-c\ell}{q-1}\rfloor\bmod p-1.
\end{split}
\end{align}
From this, one can see that we have the following morphisms in the exceptional collection: 
\begin{align*}
&\Ext^1(\mathcal{S}_{q_{2}}\{\chi_c\},\mathcal{P}_2(0,c-1))=\C\cdot a_2(c)\\
&\Ext^1(\mathcal{S}_{q_{2}}\{\chi_c\},\mathcal{P}_1((-1-j)\bmod p-1,-1,k_-))=\C\cdot b_2(-j,k_-)
\end{align*}
for $j,k_-$ as in \eqref{ChainWeights}. \\

As in the loop case, the analysis of the node $q_1=|\mathcal{C}_1|\cap|\mathcal{C}_2|$ is determined by the choice of $\chi_{d_{1,-}}$. In particular, we have $\widehat{H}_1=\Z/(\frac{q-1}{\ell})$, $\chi_{r_{2,-}}=1$, and take $\chi_{d_{1,+}}=-1$. We again order the elements of $\widehat{H}_2$ such that $\chi_c=-c$, and with this we have the following morphisms in the exceptional collection:
\begin{align*}
&\Ext^1(\mathcal{S}_{q_1}\{\chi_c\},\mathcal{P}_1(0,-1,c))=\C\cdot a_1(0,c)\\
&\Ext^1(\mathcal{S}_{q_1}\{\chi_c\},\mathcal{P}_2((c-1)\bmod \frac{(p-1)(q-1)}{\ell},-1))=\C\cdot b_1(c).
\end{align*}
To construct the mirror to this curve, we glue together two columns, $A(p-1,1;\frac{q-1}{\ell})$ and $A(\frac{q-1}{\ell},\frac{(p-1)(q-1)}{\ell};1)$  via the permutation $\sigma_1=\text{id}\in\mathfrak{S}_{\frac{q-1}{\ell}}$ gluing the first column to the second, and the permutation $\sigma_2\in\mathfrak{S}_{\frac{(p-1)(q-1)}{\ell}}$ given by
\begin{align*}
c\mapsto k_-(p-1)+(-j)\bmod p-1
\end{align*}
for $k_-,j$ as in \eqref{ChainWeights} gluing the second column back to the first. From this, it is clear that there is one boundary component arising from the first gluing, and this has winding number $-2\frac{q-1}{\ell}$. From the second gluing, we have that $\sigma_2^{-1}\tau_{\ell_1}\sigma_2\tau_{r_2}$ is given by
\begin{align*}
c\mapsto c-q,
\end{align*}
and so there are $\gcd(q,\frac{p+q-1}{\ell})$ boundary components, each with winding number $-2\frac{(p-1)(q-1)}{\gcd(\ell q,p+q-1)}$.\\

Putting this all together, we have constructed a surface, call it $\Sigma_{\w_{\mathrm{chain}},\Gamma}$, which has $1+\gcd(q,\frac{p+q-1}{\ell})$ components, Euler characteristic 
\begin{align*}
-\chi(\Sigma_{\w_{\mathrm{chain}},\Gamma})=\frac{p(q-1)}{\ell},
\end{align*} 
and genus 
\begin{align*}
g_{\mathrm{chain}}=\frac{1}{2\ell}(pq-p+\ell-\gcd(\ell q,p+q-1)).
\end{align*}
Applying Theorem \ref{WrappedTheorem} yields a quasi equivalence 
\begin{align*}
D^b(\mathcal{A}_{Z_{\w_{\mathrm{chain}},\Gamma}}-\mathrm{mod})\simeq\mathcal{W}\bigg(\Sigma_{\w_{\mathrm{chain}},\Gamma}; 2\frac{q-1}{\ell},\Big(2\frac{(p-1)(q-1)}{\gcd(\ell q,p+q-1)}\Big)^{\gcd(q,\frac{p+q-1}{\ell})}\bigg).
\end{align*}
Applying Theorem \ref{HMSTheorem} yields 
\begin{align*}
D^b\Coh(Z_{\w_{\mathrm{chain}},\Gamma})\simeq\mathcal{W}(\Sigma_{\w_{\mathrm{chain}},\Gamma}),\\
\perf Z_{\w_{\mathrm{chain}},\Gamma}\simeq\mathcal{F}(\Sigma_{\w_{\mathrm{chain}},\Gamma}).
\end{align*}
In the case of maximally graded chain polynomials, observe that the above description differs from that of \cite[Section 3.2.2]{HabermannHMS} only by a rotation of the identification of the left boundary of the first annulus in the first column. Therefore, the surface constructed in the maximally graded case is graded symplectomorphic to the Milnor fibre of $\widecheck{\w}$ in the maximally graded case. In the case of $\ell>1$, we follow the same strategy as in Section \ref{loopHMS} to deduce that $\widecheck{V}/\widecheck{\Gamma}$ is graded symplectomorphic to $\Sigma_{\mathrm{chain},\Gamma}$, and this establishes Theorem \ref{InvertiblePolyTheorem} in the case of chain polynomials. 
\subsection{Brieskorn--Pham polynomials}
The case of Brieskorn--Pham polynomials is covered in \cite{LekiliPolishchukAuslander}, although we include it here for completeness. For each Brieskorn--Pham polynomial $\w=x^p+y^q$, we consider $\W=x^p+y^q+xyz$, and $\Gamma\subseteq \Gamma_{\w}$ a subgroup of index $\ell$ containing the group generated by the grading element with identification $\Gamma\simeq\C^*\times \mu_{\frac{d}{\ell}}$. As in the previous cases, we define
\begin{align*}
Z_{\w_{\mathrm{BP}},\Gamma}=[(\W^{-1}(0)\setminus\{\boldsymbol{0}\})/\Gamma],
\end{align*}
where $\Gamma$ acts by its inclusion into $\Gamma_{\w}$. This stack has one irreducible component, whose coarse moduli space is a nodal rational curve, and the normalisation is given by $\widetilde{\mathcal{C}}\simeq\P_{\frac{(p-1)(q-1)-1}{\ell},\frac{(p-1)(q-1)-1}{\ell}}$. We identify the coordinates in the patch of $\widetilde{\mathcal{C}}$ containing $q_{+}=\infty$ as $x$, and in the patch containing $q_{-}=0$ as $y$. Therefore, the presentation of $\mathcal{C}$ around the node $q$ is given by the quotient of $xy=0$ by $H=\mu_{\frac{(p-1)(q-1)-1}{\ell}}$, where the action is given by
\begin{align*}
t\cdot (x,y)=(t^{q-1}x,ty).
\end{align*}
Correspondingly, $H$ acts on the fibre of $\OO(-q_-)$ at $q_{-}$ with weight $1$, and with weight $q-1$ on the fibre $\OO(-q_{+})$ at $q_+$.\\

In $\widehat{H}=\Z/(\frac{(p-1)(q-1)-1}{\ell})$, we label the characters such that $\chi_c=-c(q-1)$. Then, for each $c\in\Z/(\frac{(p-1)(q-1)-1}{\ell})$, we have the following morphisms in the exceptional collection:
\begin{align*}
&\Ext^1(\mathcal{S}_q\{\chi_c\},\mathcal{P}(0,c-1))=\C\cdot a(c)\\
&\Ext^1(\mathcal{S}_q\{\chi_c\},\mathcal{P}((c(q-1)-1)\bmod \frac{(p-1)(q-1)-1}{\ell},-1))=\C\cdot b(c(q-1))
\end{align*}
Correspondingly, the mirror surface is given by gluing the annulus $A(\frac{(p-1)(q-1)-1}{\ell},\frac{(p-1)(q-1)-1}{\ell};1)$ to itself via the permutation $\sigma\in\mathfrak{S}_{\frac{(p-1)(q-1)-1}{\ell}}$ given by
\begin{align*}
c\mapsto -c(q-1).
\end{align*}
The commutator $[\sigma,\tau]\in\mathfrak{S}_{\frac{(p-1)(q-1)-1}{\ell}}$, where $\tau$ is the permutation $c\mapsto c-1$, is given by
\begin{align*}
c\mapsto c-p.
\end{align*}
Correspondingly, the constructed surface, call it $\Sigma_{\w_{\mathrm{BP}},\Gamma}$, has $\gcd(q,\frac{p+q}{\ell})=\gcd(p,\frac{p+q}{\ell})$ boundary components, each of winding number $-2\frac{(p-1)(q-1)-1}{\gcd(\ell q,p+q)}$. Therefore, the Euler characteristic is 
\begin{align*}
-\chi(\Sigma_{\w_{\mathrm{BP}},\Gamma})=\frac{(p-1)(q-1)-1}{\ell},
\end{align*}
and the genus is 
\begin{align*}
g_{\mathrm{BP}}=\frac{1}{2\ell}(2\ell-1+(p-1)(q-1)-\gcd(\ell q,p+q)).
\end{align*}
Applying Theorem \ref{WrappedTheorem} yields
\begin{align*}
D^b(\mathcal{A}_{Z_{\w_{\mathrm{BP}},\Gamma}}-\mathrm{mod})\simeq\mathcal{W}\bigg(\Sigma_{\w_{\mathrm{BP}},\Gamma};\Big(2\frac{(p-1)(q-1)-1}{\gcd(\ell q, p+q)}\Big)^{\gcd(q,\frac{p+q}{\ell})}\bigg),
\end{align*}
and applying Theorem \ref{HMSTheorem} yields 
\begin{align*}
D^b\Coh(Z_{\w_{\mathrm{BP}},\Gamma})\simeq \mathcal{W}(\Sigma_{\w_{\mathrm{BP}},\Gamma}),\\
\perf Z_{\w_{\mathrm{BP}},\Gamma} \simeq \mathcal{F}(\Sigma_{\w_{\mathrm{BP}},\Gamma}).
\end{align*}
In the maximally graded case, the description of the mirror surface matches that of \cite[Section 3.2.3]{HabermannHMS} on-the-nose, and so is graded symplectomorphic to the Milnor fibre of $\widecheck{\w}$. The proof that $\widecheck{V}/\widecheck{\Gamma}$ is graded symplectomorphic to $\Sigma_{\w_{\mathrm{BP}},\Gamma}$ follows as in the loop and chain cases, and this completes the proof of Theorem \ref{InvertiblePolyTheorem}.
\appendix
\section{Root stacks}\label{root stack section}
\setcounter{equation}{0}
\renewcommand{\theequation}{\thesection.\arabic{equation}}
In this appendix, we briefly recall the relevant notions of root stacks and gerbes. This theory is well studied and developed far beyond the scope of application in this paper; we aim here only to provide a self-contained account of the relevant aspects of the theory in the context of how we use it. The notion of a root stack was introduced independently in \cite{Cadman2003UsingST} and \cite{AGV}, to which we refer to for more details. In addition, the book \cite{olsson2016algebraic} also provides an excellent exposition.\\
There are two related notions of a root stack -- the first is a way to `insert stackiness' along an effective Cartier divisor, and the second defines a gerbe structure, which `inserts stackiness' everywhere, and also keeps track of the generic stabiliser. \\

Recall that the stack $[\A^1/\C^*]$ is the classifying stack of line bundles with section -- this can be seen by considering a morphism to this stack as a principal $\C^*$-bundle with a global section of the associated line bundle. To define the root stack of a line bundle with section, consider $X$ a scheme, $\mathscr{L}$ an invertible sheaf on $X$, $s\in \Gamma(X,\mathscr{L})$ a global section, and $r>0$ an integer. Moreover, let $\theta_r:\big[\A^1/\C^*\big] \rightarrow \big[\A^1/\C^*\big] $ be the $r^\text{th}$ power map on both $\A^1$ and $\C^*$. 
\begin{mdef}[{\cite[Definition 2.2.1]{Cadman2003UsingST}, \cite[Appendix B.2]{AGV}}]
	Define the stack $X_{(\mathscr{L},s,r)}$ to be the fibre product
	\begin{equation*}
	\begin{tikzcd}
	X_{(\mathscr{L},s,r)} \arrow[r,"\text{pr}_2"] \arrow[d,"\text{pr}_1"] & \big[\A^1/\C^*\big] \arrow[d,"\theta_r"]\\
	X \arrow[r,"(\mathscr{L}\comma s)"] & \big[\A^1/\C^*\big]
	\end{tikzcd}
	\end{equation*}
\end{mdef}
This is a Deligne--Mumford stack (\cite[Theorem 2.3.3]{Cadman2003UsingST}), and is isomorphic to $X$ away from the divisor $s^{-1}(0)$. By construction, $X_{(\mathscr{L},s,r)}$ comes with a line bundle ${N}$ and a section $t\in\Gamma(X_{(\mathscr{L},s,r)},{N})$ such that $\varphi: {N}^{\otimes r}\xrightarrow{\sim}\text{pr}_1^*\mathscr{L}$, and $\varphi(t^r)=\text{pr}_1^*s$. Moreover, the construction can be generalised for when $X$ is a Deligne--Mumford stack.\\

For an effective Cartier divisor, we will also use the notation $X_{(D,r)}$ to mean $X_{(\OO_X(D),1_D,r)}$, where $1_D$ is the tautological section vanishing along $D$. One can iterate this root construction, and for $\mathbb{D}=(D_1,\dots, D_n)$ and $\vec{r}= (r_1,\dots, r_n)$, we define $X_{\mathbb{D},\vec{r}}$ to be the root stack defined by iteratively applying the above construction. \\

An important example for us will be the following:
\begin{ex}[{\cite[Lemma 2.3.1]{Cadman2003UsingST}}]\label{RootStackExample}
	For $X=\A^1$, and $D=[0]$, there is an equivalence of categories $X_{(D,r)}\simeq \big[\A^1/\mu_r\big]$, where $\mu_r$ acts via its natural character.
\end{ex}
In fact, Example \ref{RootStackExample} can be generalised (\cite[Example 2.4.1]{Cadman2003UsingST}, cf. \cite[Theorem 10.3.10]{olsson2016algebraic}) to any $X=\Spec A$ and $\mathscr{L}=\OO_X$, with $s\in \Gamma(X,\OO_X)$ such that $D=s^{-1}(0)$, yielding
\begin{align*}
X_{(D,r)}\simeq \big[\big(\Spec A[x]/(x^r-s)\big)/\mu_r\big],
\end{align*}
where $\mu_r$ acts by $t\cdot x=t^{-1}x$, and $t\cdot a=ta$. In general, any root stack can be covered by such affine root stacks. For further exposition on root stacks of line bundles with section we refer to the original references \cite{AGV}, \cite{Cadman2003UsingST}, as well as \cite[Section 10.3]{olsson2016algebraic}. \\

The second flavour of root stack defines a gerbe over the original scheme (or stack), and we refer to \cite[Chapter 12]{olsson2016algebraic} for a definition and further discussion about generalities of gerbes. Gerbes were originally introduced in \cite{giraud1971cohomologie}, and can, roughly speaking, be thought of as a `$BG$-bundle' over $X$ for some group $G$. In particular, this means that not only does the isotropy group of each point contain a copy of $G$, but the identification of this copy of $G$ in the automorphism group of each point is a crucial part of the definition. In particular, an equivalence of gerbes is an equivalence of categories which is compatible with these identifications. Note that this means that two gerbes can be equivalent as stacks, but inequivalent as gerbes, in analogy with how two principal $G$-bundles can have diffeomorphic total spaces, but are not isomorphic $G$-bundles. For example, principal $S^3$ bundles over $S^4$ are classified by $\Z\oplus\Z$, and \cite{CrowleyEscher} establishes an explicit diffeomorphism between the total spaces of the bundles classified by $(1,1)$ and $(2,0)$.  In what follows, we will restrict ourselves to the case at hand and only consider trivially banded gerbes, which are classified by $H^2(X,G)$. 
\begin{ex}
	If one considers the topological setting, then a good example to have in mind is given by the observation that any principal $S^1$-bundle is in fact a $\Z$-gerbe, since $B\Z\simeq K(\Z,1)\simeq S^1$. From this, we recover the usual classification of principal $S^1$-bundles by the Euler class in $H^2(X,\Z)$.
\end{ex}

To define a root stack of a line bundle (without section), consider $\mathscr{L}\in\Pic X$. Recall that such a line bundle is equivalent to a map $X\xrightarrow{\mathscr{L}} B\C^*$, and let $B\C^*\xrightarrow{\wedge^d}B\C^*$ be the $d^{\text{th}}$ power map. Then, we have:
\begin{mdef}[{\cite[Definition 2.2.6]{Cadman2003UsingST}, \cite[Appendix B.1]{AGV}}]
	The stack $X_{(\mathscr{L},d)}$ is defined to be the fibre product
	\begin{equation*}
	\begin{tikzcd}
	X_{(\mathscr{L},d)} \arrow[r,"\text{pr}_2"] \arrow[d,"\text{pr}_1"] & B\C^* \arrow[d,"\wedge^d"]\\
	X \arrow[r,"\mathscr{L}"] & B\C^*
	\end{tikzcd}
	\end{equation*}
\end{mdef}
The stack $X_{(\mathscr{L},d)}$ is a $\mu_d$-gerbe over $X$, and, by construction, there is a line bundle $\mathcal{N}\in \Pic X_{(\mathscr{L},d)}$ such that
\begin{align*}
\mathcal{N}^{\otimes d}\simeq \text{pr}_1^*\mathscr{L}.
\end{align*}
Of course, there is also a corresponding iterated statement (see, for example \cite[Proposition 6.9]{FantechiMannNironi}), although we will not make use of it. We will mainly use the notation $X_{(\mathscr{L},d)}=\sqrt[d]{\mathscr{L}/X}$. \\

Perhaps a more geometric way to think of a root stack of a line bundle is given in \cite[Appendix B.1]{AGV}. Let $\mathscr{L}$ be a line bundle on a scheme $X$, and $\mathscr{L}^*$ be the total space minus the zero section (i.e. the principal $\C^*$-bundle associated to $\mathscr{L}$). Then, 
\begin{align*}
\sqrt[d]{\mathscr{L}/X}=[\mathscr{L}^*/\C^*],
\end{align*}
where $\C^*$ acts fibrewise with weight $d$. In particular, the usual description of the weighted projective stack $\P(d,d)$ is recovered as $\sqrt[d]{\OO(-1)/\P^1}$, since $\OO(-1)^*=\A^2\setminus\{(0,0)\}$.
\begin{rmk}
	It should be noted that $X_{(\mathscr{L},d)}$ and $X_{(\mathscr{L},0,d)}$ are \emph{not} equivalent. Indeed, as is demonstrated in \cite[Example 2.4.3]{Cadman2003UsingST}, the latter category is an infinitesimal thickening of the former. 
\end{rmk}
The Kummer sequence 
\begin{align}\label{Kummer}
1\rightarrow\mu_d\xrightarrow{\iota}\G \xrightarrow{\wedge^d}\G\rightarrow 1
\end{align}
induces a long exact sequence on cohomology 
\begin{align}
\dots \rightarrow H^1(X,\G)\xrightarrow{\partial} H^2(X, \mu_d)\xrightarrow{\iota_*} H^2(X,\G)\rightarrow \dots.
\end{align}
For a root stack $\sqrt[d]{\mathscr{L}/X}$, the corresponding class in $H^2(X,\mu_d)$ is the image of $\mathscr{L}\in H^1(X,\G)\simeq \Pic X$ under the connecting homomorphism. Conversely, a $\mu_d$-gerbe is called \emph{essentially trivial} if its corresponding class in $H^2(X,\mu_d)$ is in the image of the connecting homomorphism. In particular, in the case where $H^2(X,\G)= 0$, we make the identification 
\begin{align*}
H^2(X,\mu_d)\simeq \Pic X/ d\Pic X,
\end{align*}
and so the cohomology class classifying the $d^{\text{th}}$ root of $\mathscr{L}$ is given by the quotient of its corresponding class in the Picard group, namely its first Chern class. Moreover, in this case \cite[Lemma 6.5]{FantechiMannNironi} identifies $H^2(X,\mu_d) \simeq \text{Ext}^1_\Z(\Z/d,\Pic X)$, where a class $[\mathscr{L}]\in \Pic X/d\Pic X$ corresponds to the short exact sequence
\begin{align}\label{GerbeClassification SES}
0\rightarrow \Pic X\rightarrow \Pic X{\times_{\Pic X/d\Pic X}}\Z/d\rightarrow \Z/d\rightarrow 0,
\end{align}
where the map $\Pic X\rightarrow \Pic X/d\Pic X$ is the projection, the map $\Z/d\rightarrow \Pic X/d\Pic X$ is given by $1\mapsto[\mathscr{L}]$, and the first morphism of the extension is $\mathscr{L}\rightarrow (\mathscr{L}^{\otimes d},0)$. \\

For each $\mu_d$-gerbe $\mathcal{X}$, there is an \emph{underlying orbifold}. This is the stack which results from the stackification of the prestack whose objects are the same as the original stack, but whose isotropy groups are quotiented by $\mu_d$. This process is known as \emph{rigidification}, although we refer to Appendix C of \cite{AGV} for the precise details. It suffices for us to observe that, in the case where the gerbe is the stack of roots of a line bundle on a scheme or orbifold, the map $\text{pr}_1:\sqrt[d]{\mathscr{L}/X}\rightarrow X$ is the rigidification map. In particular, for $\mathcal{X}= \sqrt[d]{\mathscr{L}/X}$ and $D$ a Cartier divisor on $X$, by $\OO_{{\mathcal{X}}}(D)$ we mean $\text{pr}_1^*\OO_{X}(D)$. 

\begin{ex}
	The most basic example of a gerbe is given by considering $B\mu_d$ to be a $\mu_d$-gerbe over a point. 
\end{ex}
\begin{ex}
	Consider an orbifold $X$ and the trivial action of $\mu_d$ on $X$. Then the resulting quotient stack is given by $X\times B\mu_d$, and corresponds to the stack of $d^\text{th}$ roots of $\OO_X$, or indeed any line bundle on $X$ whose $d^{\text{th}}$ root exists in $\Pic X$. 
\end{ex}
\begin{ex}
	Consider the compactified moduli space of elliptic curves $\overline{\mathcal{M}}_{1,1}\simeq \P(4,6)$. This is a $\Z/2$-gerbe over $\P(2,3)$, where the $\Z/2$-torsor corresponds to the symmetry present in any lattice defining an elliptic curve. It can be constructed as the stack of square roots of any line bundle  $\mathscr{L}\in\Pic \P(2,3)\simeq\Z$ such that $[\mathscr{L}]\in\Pic \P(2,3)/2\Pic \P(2,3)\simeq \Z/2$ is non-trivial. In this case, $\P(2,3)$ is the rigidification of the moduli space of elliptic curves. 
\end{ex}
\begin{ex}
	Consider the short exact sequence
	\begin{align*}
	1\rightarrow K\rightarrow H\rightarrow G\rightarrow 1,
	\end{align*}
	where $K$, $H$, and $G$ are all finite abelian groups. Then $BH\rightarrow BG$ is a $K$-gerbe, so is classified by $H^2(BG,K)\simeq H^2(G,K)$, which recovers the usual classification of short exact sequences in terms of group cohomology. Moreover, this is the local structure at any geometric point of a $K$-gerbe. 
\end{ex}
\begin{rmk}
	Note that above, and in what follows, we are implicitly taking $K$ to have the structure of a trivial $G$-module since we are only considering the case of trivially banded gerbes. For the remainder of the paper, we will only consider the cases where $G$ and $K$ are cyclic groups, and so we have $H^2(G,K)\simeq\text{Ext}^1_\Z(G,K)$ by the universal coefficient theorem.
\end{rmk}

We will exclusively deal with root stacks, both with and without section, over $\P^1$. To this end, consider $D_1=[0]=q_-$ and $D_2=[\infty]=q_+$ and $\vec{r}=(a,b)$. Then we define 
\begin{align*}
\P_{a,b}:=\P^1_{\mathbb{D},\vec{r}}
\end{align*}
to be the weighted projective line with a stacky point of order $a$ at $q_-$ and of order $b$ at $q_+$. Unless $\gcd(a,b)=1$, then this is not a weighted projective space; however, if this is the case then we have 
\begin{align*}
\P_{a,b}\simeq \P(a,b):=\big[\A^2\setminus\{(0,0)\}/\C^*\big],
\end{align*}
where $\C^*$ acts on $\A^2\setminus\{(0,0)\}$ with weights $a$ and $b$. Note that $H^2(\P_{a,b},\G)=0$, and so all gerbes whose underlying orbifold is $\P_{a,b}$ are essentially trivial.\\

Given a $\mu_d$-gerbe over $\P_{a,b}$, $\mathcal{C}$, the structure of the gerbe at the points $q_{\pm}$ will be of central importance to us. Observe that there is a natural (surjective) map
\begin{align}\label{projectingMV}
H^2(\P_{a,b},\mu_d)\rightarrow H^2([\A^1/\mu_a],\mu_d)\oplus H^2([\A^1/\mu_b],\mu_d)
\end{align}
which comes from the Mayer--Vietoris sequence, and this determines the Ext-class at $q_{\pm}$ which locally describes the gerbe. Explicitly, let $\mathcal{U}_-=[\A^1/\mu_a]$, suppose that $\mathcal{C}=\sqrt[d]{\mathcal{L}/\P_{a,b}}$, and that $\mathcal{L}|_{\mathcal{{U}_-}}\simeq\OO_{\mathcal{U}_-}(nq_-)$ has class $\beta\in\Pic\mathcal{U}_{-}\simeq \Z/a$. Observe that $H^2([\A^1/\mu_a],\mu_d)\simeq\Z/\gcd(a,d)$, and that the reduction $\beta \bmod d$ yields an element $[\beta]\in \Z/\gcd(a,d)$ determining a short exact sequence
\begin{align}\label{HSES}
1\rightarrow\mu_d \rightarrow H_-\rightarrow \mu_a\rightarrow 1,
\end{align}
classifying the gerbe on the patch $\mathcal{U}_-$, and corresponding to the $d^{\text{th}}$ root of $\OO_{\mathcal{U}_-}(nq_-)$. By construction, there exists a (not unique!) character $\chi_{d_-}$ of $H_-$ such that $H_-$ acts via $d\chi_{d_-}$ on the fibre of $\text{pr}_1^*\OO_{\mathcal{U}_-}(nq)$ at the origin, and which pulls back via the inclusion of $\mu_d$ to $H_-$ to a unit in $\Z/d$. Therefore, as $\mathcal{N}|_{\mathcal{U}_-}$ we take the equivariant sheaf on $\A^1$ where $H_-$ acts via $\chi_{d_-}$ on the fibre at the origin. By construction, for any $\chi\in \widehat{H}_-$, there is a unique $k\in \{0,\dots, d-1\}$ and $j\in\{m,\dots, m+a-1\}$ such that $H_-$ acts on the fibre of the sheaf
\begin{align}\label{SheafWeight}
\text{pr}_1^*\OO_{\mathcal{U}_-}(jq)\otimes\mathcal{N}^{\otimes k}
\end{align}
at the origin with character $\chi$. The local description of the gerbe on the patch $\mathcal{U}_+=[\A^1/\mu_b]$ is analogous, giving the local description of the gerbe on the two patches of $\P_{a,b}$. Conversely, the description of a gerbe on $\P_{a,b}$ is given by the local description on $\mathcal{U}_{\pm}$, together with the information of how the two local descriptions get identified on the overlapping $\C^*=\mathcal{U}_+\cap \mathcal{U}_-$.\\

There is a strong link between the derived categories of root stacks and the representation theory of finite dimensional algebras. If one takes $a=b=1$, then this relationship is classical, and is Be\u{\i}linson's result (\cite{Beilinson1978CoherentSO}) that
\begin{align*}
D^b(\P^1)\simeq D^b(\Lambda^{\text{op}}-\text{mod}),
\end{align*} 
where $\Lambda$ is the path algebra of the Kronecker quiver. This was generalised by Geigle and Lenzing in \cite{GeigleLenzing} to the situation $\P^1_{\mathbb{D},\vec{r}}$, where $\mathbb{D}$ is a finite collection of disjoint points with multiplicity one, and $\vec{r}$ is a tuple of positive integers. In particular, for $\mathbb{D}=(q_-,q_+)$ and $\vec{r}=(a,b)$ as above, it was shown that 
\begin{align*}
D^b(\P_{a,b})\simeq D^b(\Lambda_{a,b}^{\text{op}}-\text{mod}),
\end{align*}
where $\Lambda_{a,b}$ is the path algebra of the quiver 
\begin{equation}\label{exceptional collection}
\begin{tikzcd}
\OO(-aq_{-})\arrow[r,"x"] \arrow[d,equal] & \OO(-(a-1)q_-)\arrow[r,"x"] & \dots \arrow[r,"x"] & \OO(-q_-)\arrow[r,"x"] & \OO \arrow[d,equal]\\
\OO(-bq_+)\arrow[r,"y"] &\OO(-(b-1)q_-)\arrow[r,"y"] & \dots \arrow[r,"y"] & \OO(-q_+) \arrow[r,"y"]& \OO.
\end{tikzcd}
\end{equation}

This can also be viewed as a simple example of a canonical algebra, as introduced by Ringel in \cite{Ringel_1990}.\\
As for sheaves on the gerbes constructed as the root stacks over orbifold curves, consider $\mathcal{C}=\sqrt[d]{\mathscr{L}/\P_{a,b}}$ for some $\mathscr{L}\in\Pic \P_{a,b}$. There are natural full and faithful functors
\begin{align*}
\Phi_i:\Coh\P^1_{a,b}&\rightarrow \Coh\mathcal{C}\\
\mathcal{F}&\mapsto \text{pr}_1^*\mathcal{F}\otimes\mathcal{N}^{\otimes i},
\end{align*}
where $\text{pr}_1:\mathcal{C}\rightarrow \P_{a,b}$ is again the rigidification map. Taking the direct sum yields a special case of \cite[Theorem 1.5]{IshiiUeda}, giving an equivalence
\begin{align}\label{IshiiUedaRootStack}
\Coh\mathcal{C}\simeq \big(\Coh\P_{a,b}\big)^{\oplus d}.
\end{align}
Note that is not just semi-orthogonal, but also orthogonal, and that the equivalence is at the level of abelian categories. Therefore, the derived category of coherent sheaves on a gerbe over a weighted projective line only depends on the generic stabiliser group and the underlying weighted projective line.\\

It is essentially because of \eqref{IshiiUedaRootStack} that our results are independent of the choice of gerbe structure on irreducible components. To elaborate, consider $\mathcal{C}$ to be a chain of curves with two irreducible components which has isotropy group $H$ at their intersection; the general case proceeds inductively. One can construct $\mathcal{C}$ as the pushout
\begin{equation}
\begin{tikzcd}
& \mathcal{C}_1\\
\mathcal{C}_2 & BH \arrow{l}[above]{\phi}, \ar[u, hook]
\end{tikzcd}
\end{equation}
where $\phi:BH\rightarrow \mathcal{C}_2$ is the composition of the autoequivalence of $BH$ induced from the action of $H$ on the node, followed by its inclusion into $\mathcal{C}_2$. Since the abelian (and hence derived) categories of $\mathcal{C}_1$ and $\mathcal{C}_2$ are independent of gerbe structures by \eqref{IshiiUedaRootStack}, the only information required to understand the category of coherent sheaves of $\mathcal{C}$ is the autoequivalence of $BH$, and this is independent of the gerbe structure chosen, as well as the characters $\chi_{d_{1,+}}$ and $\chi_{d_{2,-}}$. 
\subsection{Root stacks and stacky fans}
A key step in our argument of \cref{InvertiblePolyTheorem} is the identification of the B--model with a cycle of nodal stacky curves. To do this, we view the B--model as a hypersurface in a toric Deligne--Mumford stack, which we now briefly review the theory of. This theory was initiated in \cite{BCS}, and the relationship with gerbes and root stacks was explored in \cite{FantechiMannNironi}. As with gerbes in general, this theory is developed well beyond the scope of what is required here, and we aim only to briefly recount the required background; we refer to the original sources for more detail. \\

Analogously to a toric variety, which contains an open dense torus $T$, a toric Deligne--Mumford stack is defined to be a smooth, separated Deligne--Mumford stack with an open immersion of a Deligne--Mumford torus, $T\times BG$ for $G$ a finite abelian group, such that the action of $T\times BG$ on itself extends to the whole stack (\cite[Section 3]{FantechiMannNironi}). The data of a stacky fan is given by a triple $\boldsymbol{\Sigma}=(\Sigma, N,\beta)$, where: 
\begin{itemize}
	\item $N$ is a finitely generated abelian group (not necessarily torsion-free),
	\item $\Sigma$ is a fan in $N_\mathbb{Q}=N\otimes_{\mathbb{Z}}\mathbb{Q}$ with $n$ rays such that the rays span $N_{\mathbb{Q}}$, and 
	\item $\beta:\Z^n\rightarrow N$ is a morphism of groups such that the image of the $i^{\text{th}}$ basis vector of $\Z^n$ in $N_{\mathbb{Q}}$ is on the $i^{\text{th}}$ ray. 
\end{itemize}
For simplicity, we will always assume that $\Sigma$ is complete. From this data, one can construct a toric DM stack in analogy with the  Cox construction for toric varieties (\cite{CoxToricVar}) as follows.
Let $d$ be the rank of $N$, and choose a projective resolution 
\begin{align*}
0\rightarrow \Z^\ell\xrightarrow{Q}\Z^{d+\ell}\rightarrow N\rightarrow 0.
\end{align*}
Then, choose a map $B:\Z^n\rightarrow \Z^{d+\ell}$ which lifts $\beta$. The cone of $\beta$, considered as a morphism of complexes $[0\rightarrow \Z^n]\rightarrow [0\rightarrow \Z^{\ell}\xrightarrow{Q}\Z^{d+\ell}\rightarrow 0]$, is given by the complex 
\begin{align*}
0\rightarrow \Z^{n+\ell}\xrightarrow{[BQ]}\Z^{d+\ell}\rightarrow 0.
\end{align*}
We define $\mathrm{DG}(\beta):=\mathrm{coker}([BQ]^\vee)$, and define the map 
\begin{align*}
\beta^\vee:(\Z^n)^\vee\rightarrow \mathrm{DG}(\beta)
\end{align*}
by the composition $(\Z^n)^\vee\hookrightarrow(\Z^{n+\ell})^\vee\rightarrow \mathrm{DG}(\beta)$. We then have $Z_{\boldsymbol{\Sigma}}=\A^n\setminus\{\boldsymbol{0}\}$ (since $\Sigma$ is complete) is the quasi-affine variety associated to the fan. By defining $G_{\boldsymbol{\Sigma}}=\Hom_{\Z}(\mathrm{DG}(\beta),\C^*)$, we get a morphism $G_{\boldsymbol{\Sigma}}\rightarrow (\C^*)^n$, and this induces an action of $G_{\boldsymbol{\Sigma}}$ on $Z_{\boldsymbol{\Sigma}}$ via the natural action of $(\C^*)^n$ on $\C^n$. The resulting stack $\mathcal{X}(\boldsymbol{\Sigma}):=[Z_{\boldsymbol{\Sigma}}/G_{\boldsymbol{\Sigma}}]$ is called the toric Deligne--Mumford stack associated to $\boldsymbol{\Sigma}$.

\begin{ex}[{\cite[Example 3.5]{BCS}}]
	Let $\Sigma$ be the complete fan in $\mathbb{Q}$, and
	\begin{align*}
	\beta:\Z^2\xrightarrow{\begin{pmatrix}
		2 & -3\\
		1& 0
		\end{pmatrix}}\Z\oplus \Z/2=:N.
	\end{align*}
	Then one can check that 
	\begin{align*}
	\beta^\vee:(\Z^2)^\vee\xrightarrow{\begin{pmatrix}
		4 & 6 \\
		\end{pmatrix}} \mathrm{DG}(\beta)\simeq \Z,
	\end{align*}
	and so the $\C^*$ action on $Z_{\boldsymbol{\Sigma}}=\A^2\setminus\{(0,0)\}$ is $t\cdot (x,y)=(t^{4}x,t^6y)$, yielding $\mathcal{X}(\boldsymbol{\Sigma})\simeq\P(4,6)$.
\end{ex}
Given a stacky fan $\boldsymbol{\Sigma}=(\Sigma, \beta, N)$, one can associate its rigidification $\boldsymbol{\Sigma}^{\text{rig}}=(\Sigma, \beta^{\text{rig}},N/N_{\text{tor}})$ by defining $\beta^{\text{rig}}:\Z^n\rightarrow N/N_{\text{tor}}$ to be the composition of $\beta$ and the quotient morphism $N\rightarrow N/N_{\text{tor}}$. The stack $\mathcal{X}(\boldsymbol{\Sigma}^{\text{rig}})$ is the DM stack associated to this stacky fan, and, by construction, comes with the rigidification map $\mathcal{X}(\boldsymbol{\Sigma})\rightarrow \mathcal{X}(\boldsymbol{\Sigma}^{\text{rig}})$ induced from the injective morphism $\text{DG}(\beta^{\text{rig}})\rightarrow\text{DG}(\beta)$. \\

Closed substacks corresponding to cones of the fan are defined in \cite[Section 4]{BCS}. We will restrict ourselves to the case of rays (one-dimensional cones), and recall the basic construction here. Let $\rho_i$ be a ray of $\Sigma$, $e_i$ the positive generator of the $i^{\text{th}}$ summand of $\Z^n$, $N_{\rho_i}$ the subgroup of $N$ generated by $\beta(e_i)$, and $N(\rho_i)=N/N_{\rho_i}$ the quotient. This defines a surjection $N_\mathbb{Q}\rightarrow N(\rho_i)_{\mathbb{Q}}$, and the quotient fan $\Sigma/\rho_i$ in $N(\rho_i)_{\mathbb{Q}}$ is defined as the image of the cones in $\Sigma$ containing $\rho_i$ under this surjection. The link of $\rho_i$ is defined as $\mathrm{link}(\rho_i)=\{\tau|\ \tau+\rho_i\in\Sigma,\ \text{and}\ \rho_i\cap\tau=0\}$. Let $\ell$ be the number of rays in $\mathrm{link}(\rho_i)$. We define the closed substack associated to $\rho_i$ as the triple $\boldsymbol{\Sigma}/\boldsymbol{\rho_i}=(\Sigma/\rho_i,N(\rho_i),\beta(\rho_i))$, where
\begin{align*}
\beta(\rho_i):\Z^\ell\rightarrow N(\rho_i)
\end{align*}
is defined as the composition $\Z^\ell\hookrightarrow \Z^n\xrightarrow{\beta} N\rightarrow N/N_{\rho_i}=N(\rho_i)$. In particular, the divisor $\mathcal{D}_{\rho_i}$ corresponding to the ray $\rho_i$ is $\mathcal{X}(\boldsymbol{\Sigma}/\boldsymbol{\rho_i})$.\\

Of most importance to us is the fact that if $\mathcal{X}$ is a toric DM stack whose coarse moduli space is $\P^1$ or $\P^2$, then (amongst other things) \cite[Theorem II]{FantechiMannNironi} shows that there exists a stacky fan whose corresponding quotient stack is $\mathcal{X}$. Moreover, in the case that $\mathcal{X}$ is an orbifold, this fan is unique. This is far from true in the case where $N$ has torsion, as is demonstrated in \cite[Example 7.29]{FantechiMannNironi}. There are several sources of non-uniqueness, although in our situation it is essentially equivalent to the fact that it is possible to choose multiple lifts of an element in $\Z/n$ to $\Z$. \\

From now on, we will restrict ourselves the the case of toric Deligne--Mumford stacks whose coarse moduli space is given by $\P^1$ or $\P^2$. Let $\mathcal{C}=\mathcal{X}(\boldsymbol{\Sigma})$ be a toric Deligne--Mumford stack whose rigidification is $\P_{a,b}$. Then, \cite[Proposition 7.20]{FantechiMannNironi} shows that there is a unique class in $\Ext^1_\Z(N_{\text{tor}},\Pic \P_{a,b})$ such that the $\Hom(N_{\text{tor}},\C^*)-$banded\footnote{We mention banding only for completeness; we continue to only consider trivially banded gerbes.} gerbe over $\P_{a,b}$ associated to this class is equivalent to $\mathcal{C}$. The proof of this proposition is constructive, and it is straightforward to determine the short exact sequence \eqref{GerbeClassification SES} from the data of a stacky fan. The main ingredient, however, which we will use in our application to invertible polynomials is \cite[Theorem 7.24]{FantechiMannNironi}, which shows (as a special case) that if $\Sigma$ is the complete fan in $\mathbb{Q}$ and 
\begin{align*}
\beta:\Z^2\xrightarrow{\begin{pmatrix}
	a & -b\\
	n_-& n_+
	\end{pmatrix}}\Z\oplus \Z/d=:N,
\end{align*}
then 
\begin{align*}
X(\boldsymbol{\Sigma})\simeq \sqrt[d]{\mathscr{L}/\P_{a,b}}
\end{align*}
\emph{as toric DM stacks}, where $\mathscr{L}=\OO(q_{-})^{n_-}\otimes \OO(q_{+})^{n_+}$. 
\begin{rmk}\label{Gerbe/Stack RMK}
	It is important to emphasise that two inequivalent gerbes can be equivalent as toric DM stacks. This happens when the corresponding Ext-classes are isomorphic as sequences, but inequivalent as extensions --  see \cite[Remark 7.23]{FantechiMannNironi} and \cite[Proposition 6.2]{Uniformization}. In particular, the above application of \cite[Theorem 7.24]{FantechiMannNironi} only makes a claim about toric DM stacks. By \cite[Proposition 7.20]{FantechiMannNironi}, one can check when this equivalence is also an equivalence of gerbes, although this will not be necessary for our purposes. 
\end{rmk}
\begin{ex}(cf. \cite[Example 3.6]{BCS})
	Let $\Sigma$ be the complete fan in $\mathbb{Q}$, $N=\Z\oplus \Z/3$, and 
	\begin{align*}
	\beta_n:\Z^2\xrightarrow{\begin{pmatrix}
		1 & -1\\
		n& 0
		\end{pmatrix}}\Z\oplus \Z/3.
	\end{align*}
	Since $\Sigma$ is complete, we have $Z_{\boldsymbol{\Sigma}_n}=\A^2\setminus\{(0,0)\}$ for any $n$. In the case of $n\bmod 3=0$, we have
	\begin{align*}
	\beta_0^\vee:(\Z^2)^\vee\xrightarrow{\begin{pmatrix}
		1 & 1 \\
		0 & 0
		\end{pmatrix}} \mathrm{DG}(\beta)\simeq \Z\oplus \Z/3,
	\end{align*}
	and so $G_{\boldsymbol{\Sigma}_0}\simeq \C^*\times\mu_3$, and $\mathcal{X}(\boldsymbol{\Sigma}_0)\simeq\P^1\times B\mu_3$. In the case where $n\bmod 3\neq 0$, we have 
	\begin{align*}
	\beta_n^\vee:(\Z^2)^\vee\xrightarrow{\begin{pmatrix}
		3 & 3 
		\end{pmatrix}} \mathrm{DG}(\beta)\simeq \Z,
	\end{align*}
	and so $G_{\boldsymbol{\Sigma}_n}\simeq \C^*$, and $\mathcal{X}(\boldsymbol{\Sigma}_n)\simeq\P(3,3)$ \emph{as toric DM stacks} for any such $n$. However, the class in $\Ext^1_{\Z}(\Z/3,\Z)$ corresponding to $n$ is given by the sequence
	\begin{align*}
	0\rightarrow \Z\xrightarrow{ \times3}\Z\xrightarrow{\times n}\Z/3\rightarrow 0,
	\end{align*}
	and so $n_1$ and $n_2$ \emph{do not} define equivalent gerbes unless $n_1\equiv n_2\bmod 3$. Moreover, this shows
	\begin{align*}
	\mathcal{X}(\boldsymbol{\Sigma}_n)&\simeq \sqrt[3]{\OO(1)/\P^1}\quad \text{for } n\bmod 3 =1\text{, and}\\
	\mathcal{X}(\boldsymbol{\Sigma}_n)&\simeq \sqrt[3]{\OO(2)/\P^1}\quad \text{for }n\bmod 3 =2.
	\end{align*}
	This also demonstrates the non-uniqueness of the fan in the case where the DM stack is not an orbifold -- taking $\beta_n$ for any $n\in\Z$ yields a gerbe which is equivalent to the $3^{\text{rd}}$ root of $\OO(n\bmod 3)$. 
\end{ex}

\end{document}